\author{Julien Melleray}
\author{Simon Robert}
\address{Universit\'e Claude Bernard -- Lyon 1 \\
  Institut Camille Jordan, CNRS UMR 5208 \\
  43 boulevard du 11 novembre 1918 \\
  69622 Villeurbanne Cedex \\
  France}
\numberwithin{equation}{section}
\title[From invariant measures to orbit equivalence]{From invariant measures to orbit equivalence, via locally finite groups}
\newcounter{dummy}
\newcommand\myitem[1][]{\item[#1]\refstepcounter{dummy}\def\@currentlabel{#1}}
\begin{document}

\begin{abstract}
We give a new proof of a theorem of Giordano, Putnam and Skau characterizing orbit equivalence of minimal homeomorphisms of the Cantor space in terms of their sets of invariant Borel probability measures. The proof is based on a strengthening of a theorem of Krieger concerning minimal actions of certain locally finite groups of homeomorphisms, and we also give a new proof of the Giordano--Putnam--Skau characterization of orbit equivalence for these actions.
\end{abstract}
\maketitle
%%%%%%%%%%%%%%%%%%%%%%%%%%%%%%%%%%%%%%%%%%%%%%%%%%%%%%%%%%%%%%%%%%%%%%%%%%%%%%%%%%%%%%%%%%%%%%%%%%%%%%%%%%%%%%%%%%%%%%
%%%%%%%%%%%%%%%%%%%%%%%%%%%%%%%%%%%%%%%%%%%%%%%%%%%%%%%%%%%%%%%%%%%%%%%%%%%%%%%%%%%%%%%%%%%%%%%%%%%%%%%%%%%%%%%%%%%%%%

\section{Introduction}
This paper is concerned with continuous actions of countable groups by homeomorphisms on the Cantor space $X$; we say that such an action is \emph{minimal} if all of its orbits are dense. Two actions of countable groups $\Gamma,\Lambda$ on $X$ are \emph{orbit equivalent} if there exists a homeomorphism $g$ of $X$ such that 
\[\forall x,x' \in X \quad \left( \exists \gamma \in \Gamma \ \gamma(x)=x' \right) \Leftrightarrow \left( \exists \lambda \in \Lambda \ \lambda(g(x))= g(x') \right) \]
In words, the actions of $\Gamma$ and $\Lambda$ are orbit equivalent when the equivalence relations that they induce are isomorphic. In what follows, we will often see $\Gamma$, $\Lambda$ as subgroups of $\Homeo(X)$, and then we simply say that $\Gamma$, $\Lambda$ are orbit equivalent.

For any minimal homeomorphism $\varphi$ of $X$ (i.e.~a homeomorphism such that the associated action of $\Z$ on $X$ is minimal), denote by $M(g)$ the set of all $g$-invariant Borel probability measures. Our motivation here is to give a proof of the following famous theorem of Giordano, Putnam and Skau, which we call throughout the paper the \enquote{classification theorem for minimal homeomorphisms}.

\begin{theorem*}[Giordano--Putnam--Skau \cite{Giordano1995}*{Theorem~2.2}]
Assume that $\varphi, \psi$ are minimal homeomorphisms of the Cantor space $X$. The following conditions are equivalent.
\begin{itemize}
\item[•] The $\Z$-actions induced by $\varphi$ and $\psi$ are orbit equivalent.
\item[•] There exists a homeomorphism $g$ of $X$ such that $g_*M(\varphi)=M(\psi)$.
\noindent (where $g_*M(\varphi)= \{g_* \mu \colon \mu \in M(\psi)\}$, with $g_* \mu(A)= \mu(g^{-1}(A))$ for any Borel subset $A$ of $X$)
\end{itemize}
\end{theorem*}

The implication from top to bottom above is easy to see, and any $g$ witnessing that $\varphi$ and $\psi$ are orbit equivalent is such that $g_ * M(\varphi)=M(\psi)$. The converse is much more surprising, and somewhat mysterious; in particular, two minimal homeomorphisms may have the same invariant Borel probability measures but induce different equivalence relations on $X$. The original proof in \cite{Giordano1995} uses homological algebra in an essential way; different arguments have been proposed since then (see for instance \cite{Putnam2010} or \cite{Putnam2018}, as well as the \enquote{elementary} proof given in \cite{Keane2011}) but it remains difficult to \enquote{understand the
dynamics that lie beneath}, to quote Glasner and Weiss \cite{Glasner1995a}. 

Our aim in this paper is to present a self-contained proof of the classification theorem for minimal $\Z$-actions. We exploit a connection with minimal actions of certain locally finite groups, which was already noticed by Giordano, Putnam and Skau. Recall that a subgroup $\Gamma$ of $\Homeo(X)$ is a \emph{full group} if 
\[\Gamma = \{g \in \Homeo(X) \colon \exists A \text{ finite } \subset \Gamma \ \forall x \in X \ \exists \gamma \in A \ \gamma(x)=x\} \]
The condition above says that $\Gamma$ coincides with the group of homeomorphisms which are obtained by gluing together finitely many elements of $\Gamma$ (see section \ref{s:fullgroups} for some more details on full groups, as well as more background on other notions that we use in this introduction. In the context of minimal $\Z$-actions on the Cantor space, full groups were first investigated in \cite{Giordano1999}). Following Krieger \cite{Krieger1979}, we say that a subgroup $\Gamma$ of $\Homeo(X)$ is \emph{ample} if it is a countable, locally finite full group such that $\{x \colon \gamma(x)= x\}$ is clopen for every $\gamma \in \Gamma$. Equivalence relations induced by minimal ample groups correspond to the \enquote{affable} equivalence relations introduced by Giordano, Putnam and Skau, which play an important part in their theory (see \cite{Giordano2004} and \cite{Putnam2018}). For any ample group $\Gamma$, the set $M(\Gamma)$ of all $\Gamma$-invariant Borel probability measures on $X$ is nonempty, and Giordano, Putnam and Skau established the following result, which we call the \enquote{classification theorem for minimal ample groups}  (see also Putnam \cite{Putnam2010} for an interesting take on this result).
\begin{theorem*}[Giordano--Putnam--Skau \cite{Giordano1995}*{Theorem~2.3}]
Let $X$ be the Cantor space. Given two ample subgroups of $\Homeo(X)$ acting minimally, the following conditions are equivalent.
\begin{itemize}
\item[•] The actions of $\Gamma$ and $\Lambda$ are orbit equivalent.
\item[•] There exists a homeomorphism $g$ of $X$ such that $g_*M(\Gamma)=M(\Lambda)$.
\end{itemize}
\end{theorem*}
Giordano, Putnam and Skau observed that every minimal $\Z$-action is orbit equivalent to an action of an ample group (the group in question is naturally associated to the $\Z$-action, see section \ref{s:Kakutani}) and then pointed out that one can deduce the classification theorem for minimal $\Z$-actions from the corresponding classification theorem for actions of ample groups. Our strategy is the same, although our proof of this fact (Theorem \ref{t:classification_minimal_homeomorphisms} here) is new. Like Giordano, Putnam and Skau we use an ``absorption theorem'' (Theorem \ref{t:absorption}) to establish the classification theorem for minimal ample groups.

As an example of such an absorption theorem (indeed, a fundamental particular case), consider the relation $E_0$ on $\{0,1\}^\N$, were for all $x,y \in \{0,1\}^\N$ one has $x \, E_0 \, y $ iff $x(n)=y(n)$ for all large enough $n$. This relation is induced by the countable, locally finite group $\Gamma$ of all dyadic permutations; these are the homeomorphisms of $\{0,1\}^\N$ obtained by first choosing a bijection $\sigma$ of some $\{0,1\}^n$ to itself, then setting $\tilde \sigma(u \frown x)= \sigma(u) \frown x$. 
Denote by $\varphi$ the dyadic odometer (which corresponds to ``adding $1$ with right-carry''), which is a minimal homeomorphism; the relation $R_\varphi$ is obtained from $E_0$ by gluing the classes of $0^\infty$ and $1^\infty$ together (indeed, $\varphi$ can only change finitely many coordinates at a time, except for the special case $\varphi(1^\infty)=0^\infty$). Giordano, Putnam and Skau's absorption theorem implies in particular that $R_\Gamma$ and $R_\varphi$ are isomorphic; this means that there exists a homeomorphism $g$ of $\{0,1\}^\N$ which turns $E_0$ into the relation obtained from $E_0$ by gluing two orbits together. This extraordinary fact is quite hard to visualize, and we do not know of any ``concrete'' construction of such a $g$. Clemens, in unpublished work \cite{Clemens2008}, gave a nice explicit construction of a minimal homeomorphism inducing $E_0$ (via a Bratteli diagram) and asked whether $E_0$ can be generated by a Lipschitz automorphism of $\{0,1\}^\N$. 

While our absorption theorem (and its strengthening, Theorem \ref{t:absorption2})  follows from known results (\cite{Giordano2004}*{Lemma~4.15}, itself generalized in \cite{Giordano2008}*{Theorem~4.6} and \cite{Matui2008}*{Theorem~3.2}), its proof is relatively elementary, and based on the following strengthening of a theorem of Krieger which we find interesting in its own right.

\begin{thm:Krieger}[see Krieger \cite{Krieger1979}*{Theorem~3.5}]
Let $\Gamma$, $\Lambda$ be two ample subgroups of $\Homeo(X)$; assume that for any $A, B \in \Clopen(X)$ there exists $\gamma \in \Gamma$ such that $\gamma(A)=B$ iff there exists $\lambda \in \Lambda$ such that $\lambda(A)=B$. 

Assume additionally that $K$ is a closed subset of $X$ which intersects each $\Gamma$-orbit in at most one point; $L$ is a closed subset of $X$ which intersects each $\Lambda$-orbit in at most one point; and $h \colon K \to L$ is a homeomorphism. 

Then there exists $g \in \Homeo(X)$ such that $g \Gamma g^{-1}=\Lambda$, and $g_{|K}=h$. 
\end{thm:Krieger}

In terms of the Polish topology of $\Homeo(X)$, the original statement of Krieger's theorem (i.e.~with $K=L=\emptyset$ above) can be seen as saying that two ample groups $\Gamma, \Lambda$ are conjugate in $\Homeo(X)$ as soon as their closures are. A lemma due to Glasner and Weiss (Lemma \ref{l:original_Glasner_Weiss} below), which is essential to our approach, implies that, whenever $\varphi$ is a minimal homeomorphism, the closure of its full group is equal to the group of all homeomorphisms which preserve every measure in $M(\varphi)$. Also, two homeomorphisms $\varphi$ and $\psi$ are orbit equivalent if and only if their full groups are conjugate in $\Homeo(X)$. Thus one can see the classification theorem for minimal $\Z$-actions as the statement that the full groups of two minimal homeomorphisms are conjugate as soon as their closures coincide, which suggests a relationship with Krieger's theorem. That is the motivation for our approach.

%Denote by $[R_\Gamma]$ the full group associated to the equivalence relation induced by a $\Gamma$-action, that is, 
%\[ [R_\Gamma] = \{g \in \Homeo(X) \colon \forall x \in X \exists \gamma \in \Gamma \quad g(x) = \gamma x \}\]
%It follows from a variation on an argument of Glasner--Weiss REF that, for any ample group $\Gamma$ acting minimally,
%\[ \overline{[R_\Gamma]} = \{g \in \Homeo(X) \colon \forall \mu \in M(\Gamma) g_* \mu= \mu\} \]
Say that an ample subgroup $\Gamma$ of $\Homeo(X)$ acting minimally is \emph{saturated} if 
\[\overline{\Gamma}= \{g \in \Homeo(X) \colon \forall \mu \in M(\Gamma) \ g_* \mu= \mu\}\] 
It follows from Krieger's theorem and a variation on the aforementioned result of Glasner and Weiss (Lemma \ref{l:ample_Glasner_Weiss} below) that two saturated ample groups $\Gamma, \Lambda$ are conjugate iff there exists $g \in \Homeo(X)$ such that $g_*M(\Gamma)=M(\Lambda)$. In particular, a strong form of the classification theorem, in the special case of saturated ample groups, follows directly from a combination of Glasner--Weiss's compactness argument and Krieger's theorem: two saturated minimal ample groups which preserve the same Borel probability measures are conjugate. 

It is then natural to try and prove the classification theorem by establishing that any minimal action of an ample subgroup of $\Homeo(X)$ is orbit equivalent to a minimal action of a saturated ample subgroup (a closely related strategy was suggested in \cite{Ibarlucia2016}). This is the approach that we follow here. 

We need to control some tension between two equivalence relations, one on clopen subsets of $X$ and the other on points. Starting from an ample group $\Gamma$, we want to coarsen the relation induced by the action of $\Gamma$ on $\Clopen(X)$ so as to make $\Gamma$ saturated; but this changes the relation induced by the action of $\Gamma$ on $X$, and the only technique we have at our disposal to control this relation is via our strengthening of Krieger's theorem. Given a minimal ample $\Gamma$, we manipulate Cantor sets $K \sqcup \sigma(K)$, where $\sigma$ is a homeomorphic involution, which intersect each $\Gamma$-orbit in at most one point, and consider the relation $R_{\Gamma,K}$ which is obtained by joining the $\Gamma$-orbit of $x$ and $\sigma(x)$ together for all $x \in K$, and leaving the other orbits unchanged. By Theorem \ref{t:Krieger_pointed}, for any two such $K,L$ the relations $R_{\Gamma,K}$ and $R_{\Gamma,L}$ are isomorphic.
Then we prove that there exists such a $K$ for which $R_{\Gamma,K}$ is induced by an ample group with the same orbits as $\Gamma$ on $\Clopen(X)$ (hence this ample group is conjugate to $\Gamma$); and another such $K$ for which $R_{\Gamma,K}$ is induced by a saturated minimal ample group. It follows that $\Gamma$ is orbit equivalent to a saturated minimal ample group, thereby establishing the classification theorem for minimal ample groups. This is where we need, and prove, an absorption theorem: the argument above establishes that $R_\Gamma$ is isomorphic to the relation obtained by gluing together the orbits of $x$ and $\sigma(x)$ for each $x \in K$.

We conclude this introduction by a few words about the structure of this article, which we tried to make as self-contained as possible, in the hope of making the proof accessible to a broad mathematical audience. Sections \ref{s:background} and \ref{s:ample} mostly consist of background material, the  exception being Theorem \ref{t:Krieger_pointed}, which is the main driving force in our proof of the classification theorems. Section \ref{s:balanced} develops an auxiliary combinatorial tool which we need to extend an ample group to a saturated ample group while having some control on the equivalence relation induced by the bigger group (the underlying idea is related to work from \cite{Ibarlucia2016}). Then we use our tools to prove the classification theorem for minimal ample groups. In the last section, we explain how to deduce the classification theorem for minimal $\Z$-actions.

{\bf Acknowledgements.} The first-named author spent an embarrassing amount of time, spread over several years, trying to understand how to prove the classification theorem for minimal $\Z$-actions via elementary methods. He wishes to thank his co-author for helping to put him out of his misery, as well as present his sincere apologies to the many colleagues whom he has bored with various complaints related to this problem, false proofs, and assorted existential crises (some of which occurred uncomfortably late in this project). 

This article is part of the second-named author's Ph.D. thesis (with the first author as advisor). 

We are grateful to A. Tserunyan and A. Zucker for useful comments and suggestions. Thanks are also due to an anonymous referee for helpful remarks and corrections.

% ; some of the work presented here has been done during demonstrations against various reforms of the French higher education system, and we are grateful to  the French government for providing us with regular opportunities to practice the peripatetic method (though we must confess some doubts concerning the introduction of regular, if random, use of teargas and flash-bangs as a modern variation on this method). Most of our research on this project was done during various states of lockdown and we acknowledge the contribution of unnamed pangolins to this work.  
\section{Background and terminology}\label{s:background}
\subsection{Full groups.}\label{s:fullgroups}
\begin{defn}
Given a subgroup $\Gamma$ of $\Homeo(X)$, we set
$$F(\Gamma)=\{g \in \Homeo(X) \colon \exists A \text{ finite } \subset \Gamma \ \forall x \in X \ \exists \gamma \in A  \ g(x)=\gamma(x)\}$$

We say that $F(\Gamma)$ is the topological full group associated to the action of $\Gamma$ on $X$; and that $\Gamma$ is a \emph{full group} if $\Gamma=F(\Gamma)$.
\end{defn}

\begin{nota}
Below, whenever $\Gamma$ is a subgroup of $\Homeo(X)$, we denote by $R_\Gamma$ the equivalence relation induced by the action of $\Gamma$, i.e.~
$$\forall x,y \in X \quad \left( xR_\Gamma y \right) \Leftrightarrow \left( \exists \gamma \in \Gamma \ \gamma (x)=y \right) $$

For $\varphi \in \Homeo(X)$, we simply denote by $R_\varphi$ the equivalence relation induced by the action of $\{\varphi^n \colon n \in \Z\}$.
\end{nota}

\begin{defn}
Let $R$ be an equivalence relation on $X$. The \emph{full group} of $R$ is 
$$[R] = \{g \in \Homeo(X) \colon \forall x \ g(x) R x \}$$
\end{defn}
Note that $[R]$ is indeed a full group as defined above; also, two actions of groups $\Gamma, \Lambda$ on $X$ are orbit equivalent if and only if there exists $g \in \Homeo(X)$ such that $g[R_\Gamma]g^{-1}=[R_\Lambda]$.

When $\varphi$ is a homeomorphism of $X$, we can consider the topological full group of the associated action of $\Z$, which we denote $F(\varphi)$; or the full group of the equivalence relation $R_\varphi$ induced by the action, which we denote $[R_\varphi]$.
The standard notations for these objects are $[[\varphi]]$ for what we denote $F(\varphi)$, and $[\varphi]$ for $[R_\varphi]$; $F(\varphi)$ is called the topological full group of $\varphi$, and $[\varphi]$ the full group of $\varphi$. We find these notations potentially confusing, especially in this paper where it will be important to keep in mind the difference between the full group associated to an action of a countable group, which is a countable group of $\Homeo(X)$, and the full group associated to an equivalence relation, a typically much bigger group. We refer the reader to \cite{Giordano1999}, where topological full groups of minimal homeomorphisms are investigated in detail.

\subsection{Invariant measures for minimal actions}

\begin{nota}\label{nota invariant probability measure}
Whenever $\Gamma$ is a subgroup of $\Homeo(X)$, we denote by $M(\Gamma)$ the set of $\Gamma$-invariant Borel probability measures. For any $\varphi \in \Homeo(X)$, we simply denote $M(\varphi)$ for $M(\{\varphi^n \colon n \in \Z\})$.
\end{nota}

The set $M(\Gamma)$ is nonempty as soon as $\Gamma$ is amenable, which is the case for the groups we are concerned with, namely $\Z$, some locally finite groups (the \emph{ample} groups considered in Section \ref{s:ample}), and full groups associated to these groups and the equivalence relations they induce. 

\begin{lemma}\label{l:invariant_measures_full_group}
Let $\Gamma$ be a countable subgroup of $\Homeo(X)$. Then $M(\Gamma)=M([R_\Gamma])$.
\end{lemma}

\begin{proof}
Since $\Gamma$ is contained in $[R_\Gamma]$, we have $M([R_\Gamma]) \subseteq M(\Gamma)$ by definition. 

Conversely, let $\mu \in M(\Gamma)$ and $g \in [R_\Gamma]$. There exists a Borel partition $(B_n)$ of $X$ and elements $\gamma_n \in \Gamma$ such that $g_{|B_n}={\gamma_n}_{|B_n}$ for all $n$. For any Borel $A$, we have
\begin{align*}
\mu(g (A)) &= \sum_{n \in \N} \mu (g(A \cap B_n)) \\
           &= \sum_{n \in \N} \mu(\gamma_n(A \cap B_n)) \\
           &= \sum_{n \in\N} \mu(A \cap B_n) \\
           &= \mu(A)
\end{align*}
\end{proof}

Given two homeomorphisms $\varphi$, $\psi$, an orbit equivalence from $R_\varphi$ to $R_\psi$ is the same as a homeomorphism $g$ of $X$ such that $g[R_\varphi]g^{-1}=[R_\psi]$. Hence whenever $g$ is an orbit equivalence from $R_\varphi$ to $R_\psi$ we have $g_*M([\varphi])=M([\psi])$, that is, $g_*M(\varphi)=M(\psi)$. This establishes the easy direction of the classification theorem for minimal $\Z$-actions.

We collect some well-known facts about invariant measures for minimal actions.

\begin{lemma}\label{l:props_invariant_measures}
Let $\Gamma$ be a countable subgroup of $\Homeo(X)$ acting minimally; assume that $M(\Gamma) \ne \emptyset$. Then:
\begin{itemize}
    \item Any $\mu \in M(\Gamma)$ is atomless.
    \item For any nonempty clopen $U$, we have $\inf_{\mu \in M(\Gamma)} \mu(U) >0$.
    \item Fix a compatible distance on $X$. For any $\varepsilon >0$, there exists $\delta >0$ such that for any clopen $A$ of diameter less than $\delta$, one has $\sup_{\mu \in M(\Gamma)} \mu(A)< \varepsilon$.
\end{itemize}
\end{lemma}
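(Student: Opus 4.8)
The plan is to derive all three items from the weak-$*$ compactness of $M(\Gamma)$ in the space $P(X)$ of Borel probability measures, combined with minimality. First I would record the two standing facts used throughout: $M(\Gamma)$ is a nonempty, weak-$*$ compact subset of $P(X)$ (nonemptiness is the amenability remark made above; compactness holds because $P(X)$ is weak-$*$ compact and, for each $\gamma \in \Gamma$, the set $\{\mu : \gamma_*\mu = \mu\}$ is weak-$*$ closed), and for every clopen $A$ the evaluation $\mu \mapsto \mu(A) = \int \mathbf{1}_A\, d\mu$ is weak-$*$ continuous since $\mathbf{1}_A$ is continuous.

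For the first item I would argue by contradiction: if some $\mu \in M(\Gamma)$ had an atom at $x$ with mass $a > 0$, then by $\Gamma$-invariance every point of the orbit $\Gamma \cdot x$ would carry mass $a$; but every $\Gamma$-orbit is infinite (a finite orbit would be a finite, hence closed, $\Gamma$-invariant set, which by minimality would be all of $X$, contradicting that $X$ is infinite), so $\mu(X)$ would be infinite. For the second item, fix a nonempty clopen $U$; by minimality $\bigcup_{\gamma \in \Gamma}\gamma(U)$ is a nonempty open $\Gamma$-invariant set, hence equals $X$ (its complement being closed and invariant), so by compactness of $X$ finitely many translates $\gamma_1(U),\dots,\gamma_n(U)$ cover $X$. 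Then for every $\mu \in M(\Gamma)$ we get $1 = \mu(X) \le \sum_{i=1}^n \mu(\gamma_i(U)) = n\,\mu(U)$, whence $\inf_{\mu} \mu(U) \ge 1/n > 0$.

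For the third item, fix $\varepsilon > 0$. For each $x \in X$ I would pick a decreasing sequence $(V^x_k)_k$ of clopen neighborhoods of $x$ with $\bigcap_k V^x_k = \{x\}$; the functions $\mu \mapsto \mu(V^x_k)$ are continuous on the compact set $M(\Gamma)$, decreasing in $k$, and converge pointwise to $\mu \mapsto \mu(\{x\}) = 0$ by the first item, so Dini's theorem yields uniform convergence and hence an index $k(x)$ with $\mu(V^x_{k(x)}) < \varepsilon$ for all $\mu \in M(\Gamma)$. The clopen sets $V^x := V^x_{k(x)}$ then form an open cover of the compact metric space $X$; taking $\delta > 0$ to be a Lebesgue number of this cover, any clopen $A$ with $\operatorname{diam}(A) < \delta$ is contained in some $V^x$, so $\mu(A) \le \mu(V^x) < \varepsilon$ for every $\mu \in M(\Gamma)$, and since $\mu \mapsto \mu(A)$ attains its supremum on the compact set $M(\Gamma)$ we even get $\sup_{\mu} \mu(A) < \varepsilon$.

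The only genuinely non-routine point is the uniformity over $M(\Gamma)$ in the last item: this is precisely where compactness of $M(\Gamma)$ enters, via Dini's theorem, since a naive argument would only produce, for each $x$, a neighborhood that is small for one measure at a time. Everything else is bookkeeping with compactness of $X$ and minimality.
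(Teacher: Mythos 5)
Your proofs of the first two items coincide with the paper's (orbit infinite by minimality, so atoms impossible; finitely many translates of $U$ cover $X$ by compactness, giving $\mu(U)\ge 1/n$). The interesting divergence is in the third item. The paper argues by contradiction: from a sequence of clopens $A_n$ with vanishing diameter and measures $\mu_n\in M(\Gamma)$ with $\mu_n(A_n)\ge\varepsilon$, it extracts subsequences converging respectively (for the Vietoris topology) to a singleton $\{x\}$ and (weak-$*$) to a measure $\mu\in M(\Gamma)$, and shows $\mu(\{x\})\ge\varepsilon$, contradicting atomlessness. You instead argue directly: for each $x$, Dini's theorem applied to the decreasing continuous functions $\mu\mapsto\mu(V^x_k)$ on the compact set $M(\Gamma)$ produces a clopen neighborhood $V^x$ with $\sup_\mu \mu(V^x)<\varepsilon$, and a Lebesgue number for the resulting cover of $X$ gives the desired $\delta$. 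Both routes lean on the compactness of $M(\Gamma)$ as the crucial ingredient, but yours avoids the contradiction and the Vietoris topology at the cost of invoking Dini's theorem and the Lebesgue number lemma; the two are roughly equivalent in effort, and which is more transparent is a matter of taste. Your argument is correct as written.
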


In particular, any $\Gamma$-invariant measure has full support.
\begin{proof}
Fix $x \in X$ and $\mu \in M(\Gamma)$. The set $\{\gamma (x) \colon \gamma \in \Gamma\}$ is infinite, so $\mu(\{x\})=0$. This proves the first property.

To see why the second property holds, fix a nonempty clopen $U$; since $\Gamma$ acts minimally, we have $\bigcup_{\gamma \in \Gamma} \gamma (U)= X$, so by compactness there exist $\gamma_1,\ldots,\gamma_n \in \Gamma$ with $X= \bigcup_{i=1}^n \gamma_i(U)$, whence $\mu(U) \ge \frac{1}{n}$ for any $\mu \in M(\Gamma)$. 

For the third point, we use the same argument as in (\cite{Bezuglyi-Medynets2008}*{Proposition 2.3}. Arguing by contradiction, we assume that there exists a sequence of clopens $(A_n)$ of vanishing diameter and $\mu_n \in M(\Gamma)$ such that $\mu_n(A_n) \ge \varepsilon$ for all $n$. Up to some extraction, we may assume that $(A_n)$ converges to $x \in X$ for the Vietoris topology on the space of compact subsets of $X$, and $(\mu_n)$ converges to $\mu \in M(\Gamma)$. If $O$ is a clopen neighborhood of $x$, we have that $A_n \subseteq O$ for large enough $n$ so that $\mu_n(O) \ge \mu_n(A_n) \ge \varepsilon$, whence $\mu(O) \ge \varepsilon$ for all $n$. Hence $\mu(\{x\}) \ge \varepsilon$, contradicting the fact that $\mu$ is atomless.
\end{proof}

The following observation will also play a part later on.

\begin{lemma}\label{l:invariant_measures_stabilize}
Let $\Gamma$ be a subgroup of $\Homeo(X)$. Denote
\[G_\Gamma = \lset{g \in \Homeo(X) \colon \forall \mu \in M(\Gamma) \ g_* \mu=\mu} \]
Then $G_\Gamma$ is a full group and $M(G_\Gamma)=M(\Gamma)$.
\end{lemma}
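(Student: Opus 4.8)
The plan is to treat the two conclusions separately, and to note that the measure-theoretic equality is almost formal. First I would observe that $\Gamma \subseteq G_\Gamma$: every $\gamma \in \Gamma$ preserves every measure in $M(\Gamma)$ by the very definition of $M(\Gamma)$. From this, $M(G_\Gamma)=M(\Gamma)$ is immediate --- the inclusion $M(G_\Gamma) \subseteq M(\Gamma)$ holds because a $G_\Gamma$-invariant measure is \emph{a fortiori} $\Gamma$-invariant, and $M(\Gamma) \subseteq M(G_\Gamma)$ holds directly from the definition of $G_\Gamma$. I would also record, using $(gh)_* = g_* h_*$ on measures, that $G_\Gamma$ is a subgroup of $\Homeo(X)$, so that it makes sense to ask whether it is a full group.

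The real content is the claim $G_\Gamma = F(G_\Gamma)$. Since $G_\Gamma \subseteq F(G_\Gamma)$ is automatic (witnessed by a singleton), I would take $g \in F(G_\Gamma)$, fix finitely many $g_1, \dots, g_n \in G_\Gamma$ with $\forall x\, \exists i\ g(x) = g_i(x)$, and produce a Borel partition $(C_i)$ of $X$ with $g_{|C_i} = {g_i}_{|C_i}$; the only thing to check is that each $\{x : g(x) = g_i(x)\}$ is closed, hence Borel --- which holds since $X$ is Hausdorff and $g, g_i$ are continuous --- after which one disjointifies. The key extra observation is that $(g_i(C_i))_i = (g(C_i))_i$ is again a Borel partition of $X$, because $g$ is a bijection. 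Now fix $\mu \in M(\Gamma)$; by the first paragraph $\mu \in M(G_\Gamma)$, so $\mu$ is $g_i$-invariant for every $i$, and for any Borel $E$ the computation
\[ \mu(g^{-1}(E)) = \sum_i \mu(g^{-1}(E) \cap C_i) = \sum_i \mu(g_i^{-1}(E) \cap C_i) = \sum_i \mu(E \cap g_i(C_i)) = \mu(E) \]
--- where the second equality uses $g = g_i$ on $C_i$, the third uses $g_i$-invariance of $\mu$ together with $g_i(g_i^{-1}(E) \cap C_i) = E \cap g_i(C_i)$, and the last uses that the sets $g_i(C_i)$ partition $X$ --- shows $g_*\mu = \mu$. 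As $\mu$ was arbitrary, $g \in G_\Gamma$, completing the proof that $G_\Gamma$ is a full group.

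I do not anticipate any genuine obstacle: the displayed computation is essentially the one already used to prove Lemma \ref{l:invariant_measures_full_group}, and the only new ingredients are the trivial inclusion $\Gamma \subseteq G_\Gamma$ and the bookkeeping that $(g(C_i))_i$ is again a partition. If anything, the single point deserving a moment's care is the passage from the closed cover $\{x : g(x) = g_i(x)\}$ to a genuine Borel partition; everything else is formal.
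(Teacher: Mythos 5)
Your argument is correct and matches the paper's approach: the equality $M(G_\Gamma)=M(\Gamma)$ is handled identically (via $\Gamma \subseteq G_\Gamma$ in one direction and the definition of $G_\Gamma$ in the other), while the full-group claim — which the paper dispatches with a single word ``clearly'' — is exactly the Borel-partition computation you carry out, mirroring the proof of Lemma~\ref{l:invariant_measures_full_group}. Your care in disjointifying the closed sets $\{x: g(x)=g_i(x)\}$ into a Borel (not clopen) partition is appropriate, since nothing forces those sets to be clopen.
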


\begin{proof}
Clearly $G_\Gamma$ is a full group; since $\Gamma \subseteq G_\Gamma$ we have $M(G_\Gamma) \subseteq M(\Gamma)$. Conversely, for any $g \in G_\Gamma$ and any $\mu \in M(\Gamma)$ we have $g_* \mu= \mu$, so $M(\Gamma) \subseteq M(G_\Gamma)$.
\end{proof}

\subsection{Kakutani--Rokhlin partitions}\label{s:Kakutani}

In this subsection, we fix a minimal homeomorphism $\varphi$ of $X$. For any nonempty $U \in \Clopen(X)$, we have $X= \bigcup_{n \in \Z} \varphi^n(U)$, thus by compactness of $X$ there exists $N$ such that 
$$X= \bigcup_{n=-N}^N \varphi^n(U)= \bigcup_{n=-2N-1}^{-1} \varphi^n(U)= \bigcup_{n=0}^{2N} \varphi^n(U) $$
This proves that the forward and backward orbit of any $x \in X$ are both dense. For any $x$ there exists some $n \ge 1$ such that $\varphi^n(x) \in U$, and we define 
\[n_U(x)= \min \lset{n \ge 1 \colon \varphi^n(x) \in U} \]
Since $U$ is clopen, the map $n_U$ is continuous, so it takes finitely many values on $U$ since $U$ is compact. Let $I=n_U(X)$ and $U_i=\lset{x \in U \colon n_U(x)=i}$. Each $U_i$ is clopen and for any $i,j \in I$ and any $n \le i-1$, $m \le j-1$ we have $\varphi^n(U_i) \cap \varphi^m(U_j)= \emptyset$ as soon as $(i,n) \ne (j,m)$. This leads us to the following definition.

\begin{defn}
A \emph{Kakutani-Rokhlin}-partition of $X$ is a clopen partition $(U_{i,j})_{i \in I, j < n_i}$ such that for any $i$ and any $j < n_i-1$ one has $\varphi(U_{i,j})=U_{i,j+1}$.

The \emph{base} of the partition is $U=\bigcup_{i \in I} U_{i,0}$, and its \emph{top} is $\varphi^{-1}(U)= \bigcup_{i \in I} U_{i,n_i-1}$. 

We say that $(U_{i,j})_{0\le j < n_i}$ is a \emph{column} of the partition, and that $n_i$ is the \emph{height} of this column. 
\end{defn}

The construction outlined before the definition of Kakutani--Rokhlin partitions shows that for any nonempty clopen $U$ there exists a Kakutani--Rokhlin partition whose base is equal to $U$. 

\begin{defn}
Let $\mcA=(U_{i,j})_{i \in I, 0 \le  j < n_i}$ and $\mcB=(B_{k,l})_{k \in K, 0 \le l < m_k}$ be two Kakutani--Rokhlin partitions.

We say that $\mcB$ refines $\mcA$ if every $B_{k,l}$ is contained in some $A_{i,j}$ and the base of $\mcB$ is contained in the base of $\mcA$ (then the top of $\mcB$ is also contained in the top of $\mcA$). 
\end{defn}

Note that if $\mcB$ refines $\mcA$ and $B_{k,l}$ is contained in some atom $A_{i,j}$ with $j < n_i-1$, then $l < m_k-1$ and $B_{k,l+1} = \varphi(B_{k,l}) \subseteq A_{i,j+1}$; one often says that the columns of $\mcB$ have been obtained by \emph{cutting and stacking} from the columns of $\mcA$. Going back to the example of a Kakutani--Rokhlin partition defined from the first return map to some clopen $U$, the intuition is that if we shrink the base to some $V \subset U$ then $\varphi^k(y)$ can only belong to $V$ if it belongs to $U$; and if $\varphi^k(x) \in U \setminus V$, then before coming back to $U$ one will have to go through the whole column containing $x$ for the Kakutani--Rokhlin partition based on $U$ (see figure \ref{fig: cutting_and_stacking} below). 

\begin{defn}
A Kakutani--Rokhlin partition $\mcA$ is \emph{compatible} with $U \in \Clopen(X)$ if $U$ belongs to the Boolean algebra generated by $\mcA$.
\end{defn}

\begin{lemma}\label{l: making a KR partition compatible with a clopen}
Let $\mcA$ be a Kakutani--Rokhlin partition, and $U \in \Clopen(X)$. There exists a Kakutani--Rokhlin partition $\mcB$ which refines $\mcA$ and is compatible with $U$.
\end{lemma}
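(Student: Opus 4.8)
The plan is to leave the base of $\mcA$ unchanged and merely cut each of its columns into sub-columns, according to how the corresponding orbit segments meet $U$. Write $\mcA = (A_{i,j})_{i \in I,\, 0 \le j < n_i}$, so that $A_{i,j} = \varphi^j(A_{i,0})$, and let $A = \bigcup_{i \in I} A_{i,0}$ be its base.

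First I would record, for each $i \in I$, the \emph{$U$-itinerary} of a point $x \in A_{i,0}$, namely the set $\{j : 0 \le j < n_i,\ \varphi^j(x) \in U\}$. Concretely, for $S \subseteq \{0, \dots, n_i - 1\}$ put
\[ A_i^S = A_{i,0} \cap \bigcap_{j \in S} \varphi^{-j}(U) \cap \bigcap_{0 \le j < n_i,\, j \notin S} \varphi^{-j}(X \setminus U). \]
Since $U$ is clopen, so is each $\varphi^{-j}(U)$, hence each $A_i^S$ is clopen; and for a fixed $i$ the nonempty ones among the $A_i^S$ form a finite clopen partition of $A_{i,0}$. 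The point of the construction is that for $x \in A_i^S$ and $0 \le j < n_i$ one has $\varphi^j(x) \in U$ if and only if $j \in S$.

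Then I would take $\mcB$ to be the family with columns $\bigl(\varphi^j(A_i^S)\bigr)_{0 \le j < n_i}$, indexed by the pairs $(i, S)$ for which $A_i^S \ne \emptyset$, so that the column of index $(i,S)$ has height $n_i$. There are then three routine verifications. First, $\mcB$ is a Kakutani--Rokhlin partition: its atoms are clopen; for each fixed $i$ one has $\bigsqcup_{S}\bigsqcup_{j} \varphi^j(A_i^S) = \bigsqcup_{j} \varphi^j(A_{i,0}) = \bigsqcup_{j} A_{i,j}$, and these sets partition $X$ as $i$ ranges over $I$; and $\varphi$ sends $\varphi^j(A_i^S)$ onto $\varphi^{j+1}(A_i^S)$ for $j < n_i - 1$. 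Second, $\mcB$ refines $\mcA$: its base is $\bigcup_{i,S} A_i^S = \bigcup_{i} A_{i,0} = A$, equal to the base of $\mcA$, and $\varphi^j(A_i^S) \subseteq \varphi^j(A_{i,0}) = A_{i,j}$. Third, $\mcB$ is compatible with $U$: by the last sentence of the previous paragraph, each atom $\varphi^j(A_i^S)$ is contained in $U$ when $j \in S$ and disjoint from $U$ when $j \notin S$, so $U$ is the union of those atoms $\varphi^j(A_i^S)$ with $j \in S$, hence lies in the Boolean algebra generated by $\mcB$.

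I do not expect a genuine obstacle here. The only mildly delicate point is that the $U$-itinerary map on $A_{i,0}$ takes only finitely many values and has clopen fibres, which is immediate from the clopenness of $U$ (and of the $\varphi^{-j}(U)$) together with the compactness of $A_{i,0}$. Note that this construction does not reduce the base; if a refinement with a strictly smaller base is ever wanted, one can first replace $\mcA$ by the first-return partition over some clopen subset of $A$ and then apply the above.
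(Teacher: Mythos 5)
Your argument is correct and coincides with the paper's: the paper defines, on each base atom $U_{i,0}$, the equivalence relation identifying points whose forward images agree in $U$ up to height $n_i$, and its classes are precisely your nonempty $A_i^S$; the resulting partition is the same ``cutting, no stacking'' refinement. Your version is just written out more explicitly (via the itinerary sets and a short check of the three axioms), but the content is identical.
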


\begin{proof}
The proof only involves cutting, and no stacking. Let $\mcA=(U_{i,j})_{i \in I,0 \le j < n_i}$. For any $i \in I$, consider the equivalence relation $R_i$ on $U_{i,0}$ defined by 
$$\left(  x R_i y \right) \Leftrightarrow \left( \forall j < n_i \left( \varphi^j(x) \in U \Leftrightarrow \varphi^j(y) \in U \right) \right)  $$
The equivalence classes of $R_i$ are clopen, call them $D^i_k$, for $k \in K_i$. 

Then the Kakutani--Rokhlin partition $\mcB$ whose columns are $(\varphi^j(D^i_k))_{0 \le j < n_i}$ refines $\mcA$ and is compatible with $U$.
\end{proof}

Note that if $\mcA$ is compatible with some $U \in \Clopen(X)$, and $\mcB$ refines $\mcA$, then $\mcB$ is also compatible with $U$.

Since there are countably many clopen sets, it follows from the previous discussion that, given any $x \in X$, we may build a refining sequence of Kakutani--Rokhlin partitions $(\mcA_n)$ with the following properties:
\begin{enumerate}
    \item \label{KR-part cond 1} For any $U \in \Clopen(X)$, there exists $n$ such that $\mcA_m$ is compatible with $U$ for all $m \ge n$.
    \item \label{KR-part cond 2} The intersection of the bases of $\mcA_n$ is equal to $\{x\}$ (and then the intersection of the tops is equal to $\varphi^{-1}(\{x\})$).
\end{enumerate}

We fix such a sequence of partitions $(\mcA_n)$ for the remainder of this section. 
\begin{remark}\label{KR-part have height as big as we want}
Let $k$ be a given natural integer.
The fact that $\varphi$ acts minimally (aperiodicity of the action would suffice) and condition \eqref{KR-part cond 2} ensure that for any big enough $n$, the base $B$ of $\mcA_n$ is such that $B,\varphi(B),\ldots,\varphi^{k-1}(B)$ are pairwise disjoint, and thus every column of $\mcA_n$ is of height larger than $k$.
\end{remark}

\begin{defn}
For any $n \in \N$, we let $\Gamma_n$ consist of all $g \in \Homeo(X)$ with the following property: for every atom $U_{i,j}$ of $\mcA_n$ there exists an integer $k_{i,j}$ with $0 \le j +k_{i,j}  < n_i$ and such that $g(y)= \varphi^{k_{i,j}}(y)$ for all $y \in U_{i,j}$.

Set $\Gamma_x(\varphi)= \bigcup_n \Gamma_n$ (note that $\Gamma_n$ is a subgroup of $\Gamma_{n+1}$ for all $n$, see figure \ref{fig: cutting_and_stacking} below).
\end{defn}

\begin{figure}[!htb]
    \centering
    \includegraphics[width=\linewidth]{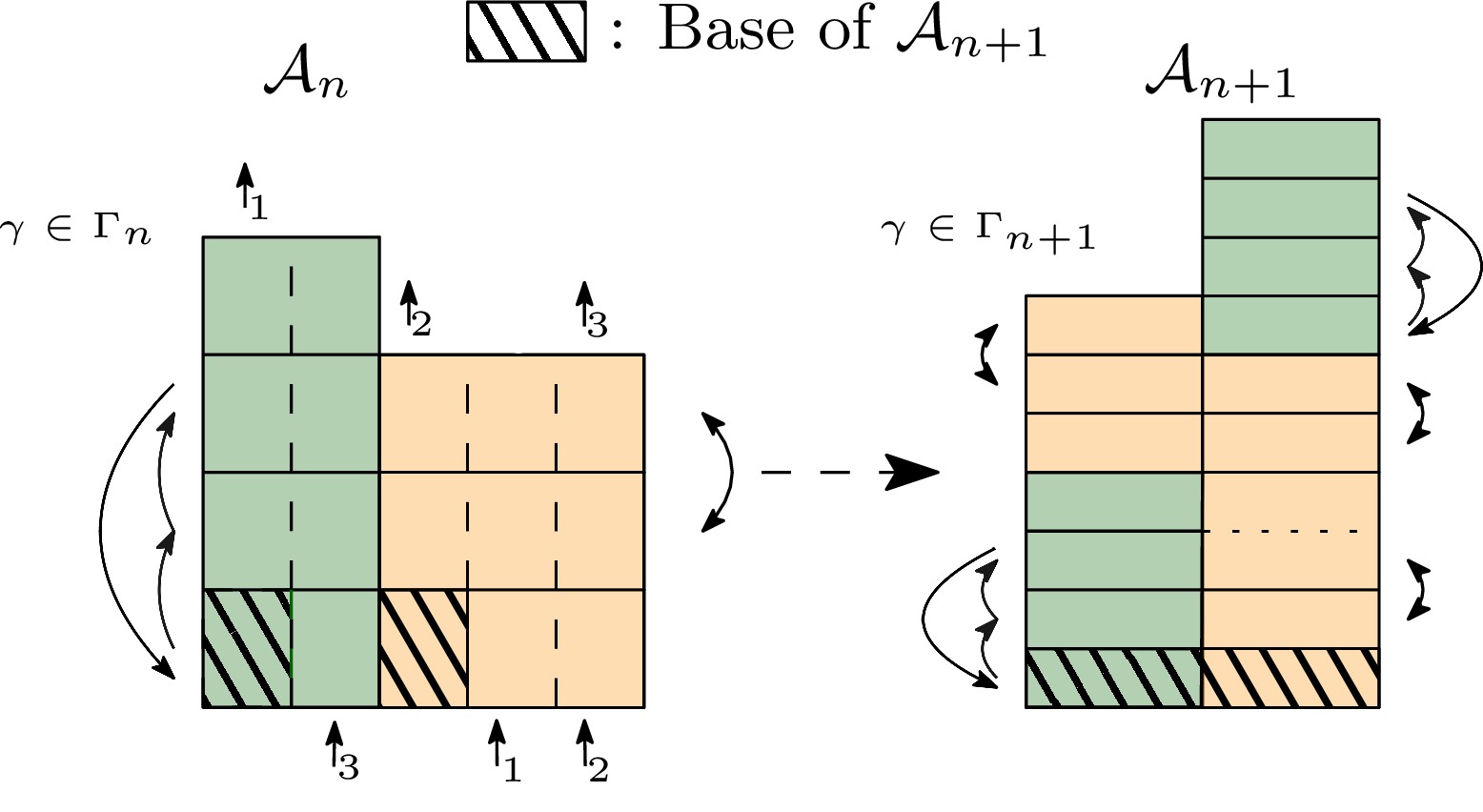}
    \caption{Cutting-and-stacking procedure, and $\Gamma_n < \Gamma_{n+1}$}
    \label{fig: cutting_and_stacking}
\end{figure}

% J: je ne comprends pas les flèches qui pointent vers le haut sur la figure; il y a un trait pointillé qui devrait être plein, non?

By definition, $\Gamma_x(\varphi)$ is a subgroup of the topological full group $F(\varphi)$.

Each $\gamma \in \Gamma_n$ induces a permutation of the atoms of each column of $\mcA_n$; and if we know how $\gamma$ permutes the atoms within each column then we have completely determined $\gamma$ (this will lead us to the concept of \emph{unit system} in the next section). In particular, each $\Gamma_n$ is finite (and is isomorphic to a direct product of finite symmetric groups). So $\Gamma_x(\varphi)$ is locally finite.

Since each clopen is eventually a union of atoms of $\mcA_n$, $\Gamma_x(\varphi)$ is a full group.

It seems from the definition that the group $\Gamma_x(\varphi)$ depends on the choice of sequence of Kakutani--Rokhlin partitions, but it only depends on the choice of $x$; and even then, we will later see as a consequence of Krieger's theorem that $\Gamma_x(\varphi)$ and $\Gamma_y(\varphi)$ are conjugate in $\Homeo(X)$ for any $x,y \in X$.

\begin{lemma}
Denote $O^+(x)= \lset{\varphi^n(x) \colon n \ge 0}$, $O^-(x)= \lset{\varphi^n(x) \colon n < 0}$. 

Then $\Gamma_x(\varphi)x= O^+(x)$, $\Gamma_x(\varphi)\varphi^{-1}(x)=O^-(x)$, and for any $y$ which is not in the $\varphi$-orbit of $x$ we have $\Gamma_x(\varphi)y= \lset{\varphi^n(y) \colon n \in \Z}$.

Further,
\[\Gamma_x(\varphi)= \lset{g \in F(\varphi) \colon g(O^+(x))=O^+(x)} \]

\end{lemma}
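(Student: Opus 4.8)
The plan is to prove the three orbit-computation statements first, then deduce the final characterization of $\Gamma_x(\varphi)$ as the subgroup of $F(\varphi)$ fixing $O^+(x)$ setwise. For the orbit computations: fix $y \in X$. Each $g \in \Gamma_n$ moves $y$ by some power $\varphi^{k}(y)$ with the power bounded in terms of the column of $\mcA_n$ containing $y$; since $\Gamma_x(\varphi) = \bigcup_n \Gamma_n$, the orbit $\Gamma_x(\varphi)y$ is contained in $\{\varphi^n(y) : n \in \Z\}$. For the reverse inclusions one uses condition \eqref{KR-part cond 2}, that the bases of $\mcA_n$ shrink to $\{x\}$, together with Remark \ref{KR-part have height as big as we want}. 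If $y = x$: in $\mcA_n$ the point $x$ lies in the base $U_{i,0}$ for some $i$, and $\varphi^j(x)$ ranges over the whole column $\{U_{i,j} : 0 \le j < n_i\}$; the element of $\Gamma_n$ cycling that column witnesses $\varphi^j(x) \in \Gamma_x(\varphi)x$ for $0 \le j < n_i$. As $n \to \infty$ the heights $n_i$ of the column containing $x$ go to infinity (Remark \ref{KR-part have height as big as we want}), so every $\varphi^j(x)$ with $j \ge 0$ is eventually reached, giving $O^+(x) \subseteq \Gamma_x(\varphi)x$; and no negative power of $x$ can be reached since $x$ always sits at the bottom of its column. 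The argument for $\varphi^{-1}(x)$, which always sits at the top of its column, is symmetric and yields $O^-(x)$. For $y$ not in the $\varphi$-orbit of $x$: then $y$ lies in the base of no $\mcA_n$ and in the top of no $\mcA_n$ (since $\bigcap_n \text{base} = \{x\}$ and $\bigcap_n \text{top} = \{\varphi^{-1}(x)\}$, and being in base/top for all large $n$ would force $y \in \{x, \varphi^{-1}(x)\}$, while being in neither for infinitely many $n$ is what we want)—more carefully, for each $n$, $y$ lies in some atom $U_{i,j}$ of $\mcA_n$, and the position $j$ within the column is not forced to be $0$ or $n_i - 1$; using that the column heights above and below $y$ both tend to infinity (again a consequence of \eqref{KR-part cond 2} and minimality), for any fixed $m \in \Z$ there is $n$ large enough that $\varphi^m(y)$ lies in the same column of $\mcA_n$ as $y$, hence is reachable by the column-cycling element of $\Gamma_n$; thus $\{\varphi^n(y): n \in \Z\} \subseteq \Gamma_x(\varphi)y$.

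For the final displayed equality: the inclusion $\Gamma_x(\varphi) \subseteq \{g \in F(\varphi) : g(O^+(x)) = O^+(x)\}$ is essentially the content of the orbit computations already proved—$\Gamma_x(\varphi) \subseteq F(\varphi)$ was noted in the text, and the computations $\Gamma_x(\varphi)x = O^+(x)$, $\Gamma_x(\varphi)\varphi^{-1}(x) = O^-(x)$ show that $\Gamma_x(\varphi)$ stabilizes the partition of the $\varphi$-orbit of $x$ into $O^+(x)$ and $O^-(x)$, so each $g \in \Gamma_x(\varphi)$ maps $O^+(x)$ onto $O^+(x)$. (Strictly: $g(O^+(x))$ is contained in the $\varphi$-orbit of $x$ and in $\Gamma_x(\varphi)O^+(x) = O^+(x)$ by the first orbit identity applied pointwise, hence bijectively onto $O^+(x)$ since $g$ is a bijection and fixes the complement $O^-(x)$ setwise too.)

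The reverse inclusion is where the real work is, and I expect it to be the main obstacle. Let $g \in F(\varphi)$ with $g(O^+(x)) = O^+(x)$; I must show $g \in \Gamma_n$ for some $n$. Since $g \in F(\varphi)$, there is a clopen partition $X = \bigsqcup_{k} V_k$ and integers $m_k$ with $g_{|V_k} = \varphi^{m_k}_{|V_k}$; by compactness finitely many $V_k$ suffice, so $g$ agrees with $\varphi^{m_k}$ on a finite clopen partition with $|m_k|$ bounded by some constant $M$. Choose $n$ large by condition \eqref{KR-part cond 1} so that $\mcA_n$ is compatible with every $V_k$ (i.e. each $V_k$ is a union of atoms of $\mcA_n$), and large enough by Remark \ref{KR-part have height as big as we want} that every column of $\mcA_n$ has height $> 2M$. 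Then $g$ maps each atom $U_{i,j}$ of $\mcA_n$ by a fixed power $\varphi^{m}$ with $|m| \le M$. It remains to check that $0 \le j + m < n_i$, i.e. that $g$ does not push an atom out of the bottom or top of its column; this is exactly where the hypothesis $g(O^+(x)) = O^+(x)$ is used. If some atom $U_{i,j}$ were mapped by $\varphi^m$ with $j + m < 0$, then a point of $U_{i,j}$ close to the base would be mapped below the base; chasing this down to the limit point $x$ of the bases, and using that $x$ sits at the bottom of its column in $\mcA_n$ for all $n$, forces $g$ to send some $\varphi^{j}(x) \in O^+(x)$ to some $\varphi^{j+m}(x)$ with $j + m < 0$, i.e. into $O^-(x)$, contradicting $g(O^+(x)) = O^+(x)$. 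The symmetric argument at the top (using $\varphi^{-1}(x)$ and $O^-(x) = X \setminus O^+(x)$ restricted to the orbit) rules out $j + m \ge n_i$. Hence all the constraints in the definition of $\Gamma_n$ are met and $g \in \Gamma_n \subseteq \Gamma_x(\varphi)$. The delicate point to get exactly right is the continuity/limiting argument identifying the behaviour of $g$ on atoms near the base with its behaviour on $x$ itself — one should phrase it using that, for $n$ large, the atom of $\mcA_n$ containing $x$ is a small clopen neighbourhood of $x$ on which $g = \varphi^{m}$ for a single $m$, and $g(x) \in O^+(x)$ forces $m \ge 0$, while $g$ restricted to that atom respecting column bounds is then automatic from height $> 2M$.
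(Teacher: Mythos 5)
Your proof of the orbit computations is correct and runs parallel to the paper's: the same use of condition~\eqref{KR-part cond 2} and Remark~\ref{KR-part have height as big as we want} to show $O^+(x)\subseteq\Gamma_x(\varphi)x$ and, for $y$ outside the orbit of $x$, to show that $\varphi^{\pm1}(y)$ is eventually reachable within a single column. The easy inclusion $\Gamma_x(\varphi)\subseteq\{g\in F(\varphi): g(O^+(x))=O^+(x)\}$ is also fine.

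The gap is in the reverse inclusion, and it is exactly the one you flag as delicate. You fix a single large $n$ for which $\mcA_n$ is compatible with the pieces $V_k$ and has tall columns, suppose some atom $U_{i,j}$ has $g=\varphi^m$ on it with $j+m<0$, and conclude that ``$g$ sends $\varphi^j(x)\in O^+(x)$ to $\varphi^{j+m}(x)$.'' But that step does not follow: the problematic atom $U_{i,j}$ sits at height $j$ of \emph{some} column $i$ of $\mcA_n$, while $\varphi^j(x)$ sits in the atom at height $j$ of the column containing $x$, which may be a different column on which $g$ is a different power of $\varphi$. All you know is $U_{i,j}\subseteq\varphi^j(B_n)$, a set that contains $\varphi^j(x)$ but at a fixed $n$ need not lie in a single $V_k$. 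Your parenthetical remedy (look only at the atom containing $x$, deduce $m\ge 0$ there) handles that one atom but leaves the other base-proximal atoms unconstrained, and $g\in\Gamma_n$ requires the bound on \emph{every} atom. Two repairs exist: (i) strengthen your choice of $n$ so that, for each $0\le j<M$, $\varphi^j(B_n)$ lies inside the single $V_k$ containing $\varphi^j(x)$ (possible, since the finitely many $\varphi^j(B_n)$ shrink to $\{\varphi^j(x)\}$) and similarly near the top — then your direct argument goes through; or (ii) argue by contradiction, as the paper does: if $g\notin\Gamma_n$ for all $n$, pass to a subsequence with constant $k\in F$ and constant offset $j$ (or $n_i-j$), pick witnesses $y_n\in U_{i(n),j}\subseteq\varphi^j(B_n)\to\{\varphi^j(x)\}$, and let $n\to\infty$ to get $g(\varphi^j(x))=\varphi^{j+k}(x)$ with $j+k<0$, a contradiction. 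Either repair is short, but as written your argument has a genuine hole at precisely the step you labeled delicate.
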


\begin{proof}

Fix $k>0$ and let $n$ be such that each column of $\mcA_n$ has height bigger than $k$. Letting $U_{i,0}$ be the atom of $\mcA_n$ containing $x$, we then have $\varphi^k(x) \in U_{i,k}$, and there is an element of $\Gamma_n$ which is equal to 
$\varphi^k$ on $U_{i,0}$, so $\varphi^k(x) \in \Gamma_x(\varphi)x$. Hence $O^+(x) \subseteq \Gamma_x(\varphi)x$ and a similar argument (or this argument applied to $\varphi^{-1}$) shows that $O^-(x)\subseteq  \Gamma_x(\varphi) \varphi^{-1}(x) $.

The converse inclusions are immediate from the definition of $\Gamma_x(\varphi)$: for $k \ge 0$, and any $n$ such that the height of each column of $\mcA_n$ is bigger than $k$, the $\Gamma_n$-orbit of $\varphi^k(x)$ consists of $x,\ldots,\varphi^n(x)$ (and similarly for negative $k$).

Next, denote by $U_n$ the base of $\mcA_n$. If $y$ does not belong to the $\varphi$-orbit of $x$, then for any sufficiently large $n$ $y$ does not belong to $U_n \cup \varphi^{-1}(U_n)$. If we denote by $V_n$ the atom of $\mcA_n$ which contains $y$, this means that the map which is equal to $\varphi$ on $V_n$ and $\varphi^{-1}$ on $\varphi(V_n)$, as well as the map which is equal to $\varphi^{-1}$ on $V_n$ and $\varphi$ on $\varphi^{-1}(V_n)$ both belong to $\Gamma_n$. Thus $y$ belongs to the same  $\Gamma_x(\varphi)$-orbit as $\varphi^{\pm 1}(y)$, so $\Gamma_x(\varphi)y= \{\varphi^n(y) \colon n \in \Z\}$. This completes the description of the $\Gamma_x(\varphi)$-orbits.

We still have to prove the second assertion. One inclusion comes from what we just established. Let $g \in F(\varphi)$ and set 
\[F= \lset{k \in \Z \colon \exists x \ g(x) = \varphi^k(x) } \] 
$F$ is a finite set; for any sufficiently large $n$, on any atom $U_{i,j}$ of $\mcA_n$ there exists $k \in F$ such that $g(y)= \varphi^k(y)$ for all $y \in U_{i,j}$.  If $g$ does not belong to $\Gamma_x(\varphi)$, there must exist some $k \in F$ such that for infinitely many $n$ there exists some atom $U_{i,j}$ of $\mcA_n$ such that either $k+j \ge n_i$ or $k+j < 0$. Considering only a subsequence of $(\mcA_n)$, we may assume that we are always in the first case and $n_i-j$ is constant equal to 
some integer $m$ (since $0 \le n_i-j \le k$, only finitely many values are possible); or always in the second case and $j$ is constant. In the first case, $g$ maps $\varphi^{-m-1}(x) \in O^{-}(x)$ to $\varphi^{k-m}(x) \in O^+(x)$; in the second case $g$ maps $\varphi^j(x) \in O^+(x)$ to $\varphi^{k+j}(x) \in O^-(x)$. 

We just proved that if $g \in  F(\varphi) \setminus \Gamma_x(\varphi)$ then $g(O^+(x)) \neq O^+(x)$, which is the contrapositive of the implication we were aiming for.
\end{proof}

\section{Ample groups and a pointed version of Krieger's theorem}\label{s:ample}
Now we go over some notions from Krieger \cite{Krieger1979}. We will in particular establish a strengthening of the main result of \cite{Krieger1979} (theorem \ref{t:Krieger_pointed} below). 
\subsection{Ample groups}

\begin{defn}[Krieger \cite{Krieger1979}]
A subgroup $\Gamma$ of $\Homeo(X)$ is an \emph{ample group} if 
\begin{itemize}
    \item $\Gamma$ is a locally finite, countable, full group.  
    \item For all $\gamma \in \Gamma$, $\{x \colon \gamma(x)=x\}$ is clopen.
\end{itemize}
\end{defn}

Our main example comes from the objects introduced in the previous section: given a minimal homeomorphism $\varphi$ and $x \in X$, the group $\Gamma_x(\varphi)$  is an ample group (the fact that for each $\gamma$ the set $\{x \colon \gamma(x) =x\}$ is clopen comes from the fact that $\varphi$ has no periodic points, since it acts minimally). Actually, all ample groups are of this form, see Theorem \ref{t:ample_vs_integers}.

\begin{defn}[Krieger \cite{Krieger1979}]
Let $\Gamma$  be a subgroup of $\Homeo(X)$, and $\mcA$ be a Boolean subalgebra of $\Clopen(X)$. We say that $(\mcA,\Gamma)$ is a \emph{unit system} if:
\begin{itemize}
    \item Every $\gamma \in \Gamma$ induces an automorphism of $\mcA$.
    \item If $\gamma \in \Gamma$ is such that $\gamma(A)=A$ for all $A \in \mcA$, then $\gamma=1$.
    \item If $g \in \Homeo(X)$ induces an automorphism of $\mcA$, and for any atom of $\mcA$ there exists $\gamma_A \in \Gamma$ such that ${\gamma_{A}}_{|A}=g_{|A}$, then $g \in \Gamma$.
\end{itemize}
We say that $(\mcA,\Gamma)$ is a finite unit system if $\mcA$ is finite (in which case $\Gamma$ is finite also).
\end{defn}

By definition, if we know how $\gamma \in \Gamma$ acts on atoms of $\mcA$ then $\gamma$ is uniquely determined.
We say that a unit system $(\mcB,\Delta)$ \emph{refines} another unit system $(\Gamma,\mcA)$ if $\Gamma \subseteq \Delta$ and $\mcA \subseteq \mcB$.

\begin{lemma}[Krieger]\label{l:Krieger_unit_systems}
Let $\Gamma$ be an ample group. There exists a refining sequence $(\mcA_n,\Gamma_n)$ of finite unit systems such that 
\[\Clopen(X) = \bigcup_n \mcA_n \quad \text{ ;} \quad \Gamma= \bigcup_n \Gamma_n\]
We say that such a sequence of unit systems is \emph{exhaustive}.
\end{lemma}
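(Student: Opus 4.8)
The plan is to construct the sequence directly by iterating one elementary step. The step is: \emph{given a finite subgroup $F$ of $\Gamma$ and a finite Boolean subalgebra $\mcD$ of $\Clopen(X)$, produce a finite unit system $(\mcA,\Gamma_0)$ with $\mcD\subseteq\mcA$, $F\subseteq\Gamma_0\subseteq\Gamma$, and $\mcA$ invariant under $F$.} For the Boolean algebra, fix for each $\delta\in F\setminus\{1\}$ the clopen set $\lset{x\colon\delta(x)=x}$ (clopen precisely because $\Gamma$ is ample), and, using compactness, a finite cover of its complement $S_\delta=\lset{x\colon\delta(x)\ne x}$ by clopen sets $U$ with $\delta(U)\cap U=\emptyset$. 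Let $\mcA$ be the (finite) Boolean algebra generated by $\mcD$, by all the sets $\lset{x\colon\delta(x)=x}$, by all these covering sets, and closed under the action of the finite group $F$. Then $\mcA$ is finite and $F$-invariant, and any atom $A$ of $\mcA$ contained in some $S_\delta$ satisfies $\delta(A)\cap A=\emptyset$.

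For the group, let $\Gamma_0$ be the set of all $g\in\Homeo(X)$ which induce an automorphism of $\mcA$ and such that for every atom $A$ of $\mcA$ there is $\delta_A\in F$ with $g_{|A}=(\delta_A)_{|A}$. A direct computation using that $F$ is a group shows $\Gamma_0$ is closed under composition and inverses, and $\Gamma_0$ is finite (at most $|F|$ to the power the number of atoms, since an element is determined by its restrictions to atoms). Since the atoms form a finite clopen partition and $F\subseteq\Gamma$, fullness of $\Gamma$ gives $\Gamma_0\subseteq\Gamma$; $F$-invariance of $\mcA$ gives $F\subseteq\Gamma_0$. The three unit-system axioms are then checked: the first and third are immediate from the definition of $\Gamma_0$ (for the third, if $g$ induces an automorphism of $\mcA$ and $g_{|A}=(\gamma_A)_{|A}$ with $\gamma_A\in\Gamma_0$, then $(\gamma_A)_{|A}=\delta_{|A}$ for some $\delta\in F$, so $g\in\Gamma_0$); for the second, if $\gamma\in\Gamma_0$ fixes every atom setwise and $\gamma\ne1$, then on some atom $A$ we have $\gamma_{|A}=\delta_{|A}$ with $\delta\in F$ not the identity on $A$, whence $A\subseteq S_\delta$ and $\delta(A)\cap A=\emptyset$, contradicting $\gamma(A)=\delta(A)=A$.

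Finally I would iterate. Write $\Gamma=\bigcup_nF_n$ with $(F_n)$ an increasing sequence of finite subgroups (possible since $\Gamma$ is countable and locally finite), and enumerate $\Clopen(X)=\lset{C_n\colon n\in\N}$. Starting from the trivial unit system $(\{\emptyset,X\},\{1\})$, at step $n$ apply the construction above to $F:=\langle\Gamma_{n-1}\cup F_n\rangle$ and $\mcD:=$ the algebra generated by $\mcA_{n-1}\cup\{C_n\}$, obtaining $(\mcA_n,\Gamma_n)$. Then $\mcA_{n-1}\subseteq\mcA_n$ and $\Gamma_{n-1}\subseteq F\subseteq\Gamma_n$, so the sequence of unit systems is refining; moreover $C_n\in\mcA_n$ and $F_n\subseteq\Gamma_n\subseteq\Gamma$, so $\bigcup_n\mcA_n=\Clopen(X)$ and $\bigcup_n\Gamma_n=\Gamma$, as required.

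The main obstacle is choosing $\Gamma_n$ correctly. One cannot take $\Gamma_n$ to be the full setwise stabilizer of $\mcA_n$ inside $\Gamma$: that group is typically infinite, since elements of $\Gamma$ supported inside a single atom fix every atom setwise, and no finite refinement of $\mcA_n$ removes all of them at once. The remedy is to restrict to homeomorphisms that are, piecewise on atoms, equal to elements of a fixed finite subgroup $F$ — this keeps $\Gamma_0$ finite while still making condition (iii) hold automatically — and then the verification of condition (ii) is exactly what forces the atoms to be fine enough, using that each $\delta\in F$ has clopen fixed-point set.
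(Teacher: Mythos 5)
Your proof is correct and follows essentially the same strategy as the paper: build an $F$-invariant finite subalgebra $\mcA$ fine enough that each $\delta\in F$ either fixes an atom pointwise or moves it off itself (using that ampleness makes fixed-point sets clopen), take $\Gamma_0$ to be the piecewise-on-atoms-$F$ elements, and iterate along an enumeration of $\Gamma$ and $\Clopen(X)$. The only difference is cosmetic: where the paper refines using the period decomposition $U_p(\gamma)=\bigsqcup_{k<p}\gamma^k(W_p(\gamma))$ with a fundamental domain $W_p(\gamma)$, you refine directly via a finite clopen cover of $S_\delta=\lset{x\colon\delta(x)\ne x}$ by sets $U$ with $\delta(U)\cap U=\emptyset$, which is a mild simplification achieving the same dichotomy on atoms.
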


\begin{proof}
Fix enumerations $(\gamma_n)_{n\in\N}$ of $\Gamma$ and $(U_n)_{n\in\N}$ of $\Clopen(X)$. We construct inductively a refining sequence of
finite unit systems $(\mcA_n,\Gamma_n)_{n\in\N}$ such that 
$$\forall n\in\N, U_n\in \mcA_n \text{ and } \gamma_n\in \Gamma_n.$$
Assume $\gamma_0=id$ and $U_0=X$, so that $(\{X,\emptyset\},\{\gamma_0\})$ is already a unit system, and set $\mcA_0=\{X,\emptyset\}$, 
$\Gamma_0=\{\gamma_0\}$.

Then assume $(\mcA_n,\Gamma_n)$ is constructed for some $n\geq 0$, and let $\Gamma'_n$ denote the (finite) group generated by $\Gamma_n$ and $\gamma_{n+1}$.
Fix $\gamma \in \Gamma_n'$. For any integer $p$, the set $U_p(\gamma)$ of points which have period exactly $p$ for $\gamma$ is clopen. Let $I(\gamma)$ denote the (finite) set of all $p$ such that $U_p(\gamma) \ne \emptyset$.

Whenever $p \ge 2$ and $x \in U_p(\gamma)$, we can find some clopen neighborhood $V_p$ of $x$ such that $V_p, \gamma(V_p),\ldots,\gamma^{p-1}(V_p)$ are pairwise disjoint. Since $U_p(\gamma)$ is covered by finitely many such $V_p$, we can then produce a clopen $W_p(\gamma)$ such that for any $p \in I(\gamma)$ one has
$$U_p(\gamma) = \bigsqcup_{k=0}^{p-1} \gamma^k(W_p(\gamma))  $$

Then the family $(\gamma^k(W_p(\gamma))_{p \in I, 0 \le k \le p-1}$ forms a clopen partition, which generates a finite Boolean subalgebra $\mcB_\gamma$ of $\Clopen(X)$. For any atom $A$ of this partition, either $\gamma$ is equal to the identity on $A$, or $\gamma(A) \cap A= \emptyset$.

Let $\mcA_{n+1}$ denote the coarsest $\Gamma_n'$-invariant Boolean subalgebra of $\Clopen(X)$ which refines each $\mcB_\gamma$ as well as $\mcA_n$ and the subalgebra $\{\emptyset, U_{n+1}, X \setminus U_{n+1}, X\}$. It is a finite subalgebra of $\Clopen(X)$.

Finally, let $\Gamma_{n+1}$ denote all $\gamma \in \Gamma$ such that for any atom $U$ of $\mcA_{n+1}$ there exists $\delta \in \Gamma_n'$ which coincides with $\gamma$ on $U$.

By construction, $\Gamma_n'$ is contained in $\Gamma_{n+1}$, and $U_{n+1} \in \mcA_{n+1}$. It remains to point out that $(\mcA_{n+1},\Gamma_{n+1})$ is a unit system; to see this, choose $\gamma \in \Gamma_n'$ and $U$ an atom of $\mcA_{n+1}$ such that $\gamma(U)=U$. Then $U$ is contained in some atom $A$ of $\mcB_\gamma$, and either $\gamma$ coincides with the identity on $A$ or $\gamma(A) \cap A= \emptyset$. Since $U \subseteq A$ and $\gamma(U)=U$ we must be in the first situation, which concludes the proof.
\end{proof}

The following result is an analogue of a lemma due to Glasner and Weiss \cite{Glasner1995a}*{Lemma~2.5 and Proposition~2.6}; the argument we use in the proof is essentially the same as in \cite{Glasner1995a}. 
\begin{lemma}\label{l:ample_Glasner_Weiss}
Let $\Gamma$ be an ample group; recall that $M(\Gamma)$ is the set of all $\Gamma$-invariant Borel probability measures on $X$. Let $A,B$ be two clopen subsets of $X$.
\begin{enumerate}
    \item \label{proof:GW1} Assume that $\mu(A) < \mu(B)$ for all $\mu \in M(\Gamma)$. Then there exists $\gamma \in \Gamma$ such that $\gamma(A) \subset B$.
    \item \label{proof:GW2} Assume that $\mu(A)=\mu(B)$ for all $\mu \in M(\Gamma)$ and that $\Gamma$ acts topologically transitively. Then there exists $g \in [R_\Gamma]$ such that $g(A)=B$.
\end{enumerate}

\end{lemma}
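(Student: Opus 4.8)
The plan is to mimic the classical Glasner--Weiss marriage-lemma argument, using the exhaustive sequence of finite unit systems $(\mcA_n, \Gamma_n)$ provided by Lemma \ref{l:Krieger_unit_systems}, together with the fact (Lemma \ref{l:props_invariant_measures}) that invariant measures of minimal --- or just topologically transitive --- ample groups have full support and assign uniformly small mass to small clopen sets. The key point, which I would isolate as a sublemma, is a \emph{comparison principle at finite level}: if $(\mcA, \Delta)$ is a finite unit system and $A, B$ are unions of atoms of $\mcA$ with strictly fewer atoms in $A$ than in $B$ \emph{inside each $\Delta$-orbit of atoms}, then some $\delta \in \Delta$ satisfies $\delta(A) \subset B$. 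This is just a matching argument within each orbit of the (finite) action of $\Delta$ on the atoms of $\mcA$: in an orbit containing $a$ atoms of $A$ and $b > a$ atoms of $B$, pick any injection and realize it by an element of $\Delta$, which is possible precisely because $(\mcA,\Delta)$ is a unit system (we are free to prescribe where each atom goes, orbit by orbit). Running this simultaneously over all orbits produces the desired $\delta$.

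For part \eqref{proof:GW1}, first I would reduce the measure hypothesis to a counting hypothesis. Since $M(\Gamma)$ is compact (weak-$*$) and $\mu \mapsto \mu(B) - \mu(A)$ is continuous and strictly positive, there is $\varepsilon > 0$ with $\mu(B) \ge \mu(A) + \varepsilon$ for all $\mu \in M(\Gamma)$. Now refine: choose $n$ large enough that $A, B \in \mcA_n$ and, by the third item of Lemma \ref{l:props_invariant_measures}, that every atom of $\mcA_n$ has $\mu$-measure $< \varepsilon/(|\text{atoms of }\mcA_n|)$ for all $\mu$ (equivalently, pass to a finer unit system whose atoms are all small). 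Then I claim that in each $\Gamma_n$-orbit $\mathcal{O}$ of atoms of $\mcA_n$, the number of atoms of $A$ in $\mathcal{O}$ is strictly less than the number of atoms of $B$ in $\mathcal{O}$. Indeed, atoms in the same $\Gamma_n$-orbit have the same measure under any $\mu \in M(\Gamma)$ (an orbit element carries one to another), so if $\mathcal{O}$ had $a$ atoms in $A$, $b \le a$ atoms in $B$, and each atom in $\mathcal{O}$ has measure $c_\mu > 0$ for a fixed $\mu$, then, summing the contributions orbit by orbit, $\mu(A) \ge \mu(B)$ once we pick $\mu$ witnessing equality on every \emph{other} orbit --- here one must be slightly careful, so instead I would argue directly: for a suitable $\mu$, $\mu(A) = \sum_{\mathcal O} a_{\mathcal O} c_{\mathcal O,\mu}$ and $\mu(B) = \sum_{\mathcal O} b_{\mathcal O} c_{\mathcal O,\mu}$ with $c_{\mathcal O,\mu}>0$; if $a_{\mathcal O} \ge b_{\mathcal O}$ for some $\mathcal O$ then there is no immediate contradiction, so the cleaner route is the contrapositive within a single orbit using an invariant measure concentrated near that orbit. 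I would therefore phrase it as: if in some orbit $a_{\mathcal O} \ge b_{\mathcal O}$, build (using full support and the Glasner--Weiss compactness trick, or an explicit averaging over the finite orbit) an invariant measure $\mu$ for which $\mu(A) \ge \mu(B) - \varepsilon$ fails to be strict, contradicting the reduction. Granting the counting inequality in every orbit, the sublemma yields $\gamma = \gamma_n \in \Gamma_n \subseteq \Gamma$ with $\gamma(A) \subset B$.

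For part \eqref{proof:GW2}, the hypothesis $\mu(A) = \mu(B)$ for all $\mu$ gives, for each nonempty clopen $C \subset A$, that $\mu(A \setminus C) < \mu(B)$, hence by part \eqref{proof:GW1} some $\gamma \in \Gamma$ with $\gamma(A \setminus C) \subset B$; this is the engine for a standard back-and-forth exhaustion. Concretely, I would build by induction partitions of $A$ and of $B$ into clopen pieces $A = \bigsqcup A_i$, $B = \bigsqcup B_i$ and elements $\gamma_i \in \Gamma$ with $\gamma_i(A_i) = B_i$: at each stage, having dealt with finitely many pieces, the leftover $A' \subset A$ and $B' \subset B$ still satisfy $\mu(A') = \mu(B')$ for all $\mu$ (since we removed equal-measure pieces), so part \eqref{proof:GW1} applied to $A'$ minus a small clopen ball, and symmetrically to $B'$, lets one carve off new matched clopen pieces; topological transitivity (full support of invariant measures) ensures the diameters can be driven to $0$ so that the pieces exhaust $A$ and $B$. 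The resulting $g$ defined by $g|_{A_i} = \gamma_i|_{A_i}$ and $g = \mathrm{id}$ off $A$ is a homeomorphism, lies in $[R_\Gamma]$ by construction, and satisfies $g(A) = B$.

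The main obstacle I anticipate is the passage, in \eqref{proof:GW1}, from the inequality of measures to the orbit-by-orbit counting inequality at finite level --- in particular, verifying that when two atoms lie in the same $\Gamma_n$-orbit they must have equal mass under \emph{every} $\Gamma$-invariant measure, and then extracting from a potential counting failure in one orbit an honest violation of $\mu(A) < \mu(B)$. The equal-mass claim is immediate ($\gamma_* \mu = \mu$ and $\gamma$ maps one atom onto the other), but converting a single bad orbit into a global measure inequality requires producing an invariant measure that is, in the relevant orbit, as heavy as possible relative to the rest; this is exactly where one invokes the Glasner--Weiss compactness argument referenced in the statement (weak-$*$ compactness of $M(\Gamma)$ plus an averaging/limiting construction), and it is the only genuinely non-combinatorial input. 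Everything else is the marriage lemma and a routine exhaustion.
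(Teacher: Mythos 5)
Your architecture (counting within $\Gamma_n$-orbits of atoms, a matching argument to realise the counting inequality by a group element, and a back-and-forth exhaustion for (2)) is the right one and matches the paper's, but there are two genuine gaps in how you propose to carry it out.

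In part (1), the mechanism you describe for passing from the measure inequality to the orbit-by-orbit counting inequality does not work as stated. You want to fix a level $n$, suppose some $\Gamma_n$-orbit $\mathcal{O}$ of atoms is bad (i.e.\ $a_{\mathcal{O}}\ge b_{\mathcal{O}}$), and then produce a $\Gamma$-\emph{invariant} measure that is concentrated, or ``as heavy as possible,'' on $\mathcal{O}$. But a single bad orbit at a single finite level gives you no control on $\Gamma$-invariant measures: the natural measure you can write down from such an orbit, namely $\mu_n=\frac{1}{|\Gamma_n|}\sum_{\gamma\in\Gamma_n}\delta_{\gamma(x)}$ with $x$ in an atom of $\mathcal{O}$, is only $\Gamma_n$-invariant, not $\Gamma$-invariant, and there is no reason a $\Gamma$-invariant measure should be close to it. The correct structure (and the one in the paper) is a \emph{sequence} argument: suppose, for a contradiction, that bad orbits exist at arbitrarily large levels $p$; form the $\Gamma_p$-invariant empirical measures $\mu_p$ supported on the bad orbit at level $p$ (these satisfy $\mu_p(A)\ge\mu_p(B)$ because $\mu_p$ is uniform on the atoms of that orbit); by compactness of the space of \emph{all} Borel probability measures on $X$ (not of $M(\Gamma)$), extract a weak-$*$ cluster point $\mu$; since $\Gamma=\bigcup_n\Gamma_n$, $\mu$ is automatically $\Gamma$-invariant, and since $A,B$ are clopen, $\mu(A)\ge\mu(B)$, the contradiction. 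So the claim you want --- that for some fixed large $n$ every orbit is good --- is obtained as the \emph{conclusion} of this contradiction argument, not established directly level by level; and it is the compactness of $\mathcal{P}(X)$ together with the fact that $\Gamma=\bigcup\Gamma_n$, rather than compactness of $M(\Gamma)$, that makes the limit land in $M(\Gamma)$.

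In part (2), you correctly note that part (1) lets you carve off $\sim_\Gamma$-matched clopen pieces $A_i\subset A$, $B_i\subset B$ with diameters of the leftovers shrinking to zero, but you do not address what happens at the limit. If the diameters of $A\setminus\bigsqcup_{i\le n}A_i$ and $B\setminus\bigsqcup_{i\le n}B_i$ tend to $0$, the carved pieces do \emph{not} exhaust $A$ and $B$: each leftover shrinks to a single point, say $a\in A$ and $b\in B$, and your piecewise-defined map still has to send $a$ somewhere. For the resulting $g$ to lie in $[R_\Gamma]$ you need $g(a)$ to be in the $\Gamma$-orbit of $a$, which forces you to \emph{choose} $a$ and $b$ in advance with $b\in B\cap\Gamma a$, and this is precisely where topological transitivity enters --- not, as you suggest, via ``full support of invariant measures.'' The paper's proof fixes such $a,b$ at the outset, carries out the back-and-forth keeping $a$ and $b$ outside the carved pieces, and then sets $g(a)=b$, $g(b)=a$. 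Without this preliminary choice the construction can produce a homeomorphism mapping $A$ to $B$ that fails to lie in $[R_\Gamma]$ at exactly one point. This is a small but essential step, and it is the only place the topological transitivity hypothesis is actually used.
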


We only assume that $\Gamma$ acts topologically transitively above because that is the natural hypothesis to make the argument work
. Since our concern is with minimal actions, we mostly stick with the minimality assumption throughout the paper but make an exception here.

\begin{proof}
\eqref{proof:GW1} Find an exhaustive sequence $(\mcA_n,\Gamma_n)$ of finite unit systems. There exists $m \in \N$ such that for all $n\ge m$ both $A$ and $B$ are unions of atoms of $\mcA_n$. For $U$ an atom of $\mcA_n$, we may thus consider 
$$a_n(U)= \left|\lset{ V \in \Gamma_n U \colon V \subset A }\right| $$
$$b_n(U)= \left|\lset{ V \in \Gamma_n U \colon V \subset B }\right| $$
(where $|F|$ stands for the cardinality of a finite set $F$)

Assume that for any $n \ge m$ there exists $ p\ge n$ and an atom $U_p \in \mcA_p$ such that $a_p(U_p) \ge b_p(U_p)$; pick $x_p \in U_p$ and set 
$$\mu_p= \frac{1}{|\Gamma_p|} \sum_{\gamma \in \Gamma_p} \delta_{\gamma(x_p)} $$
where $\delta_y$ stands for the Dirac measure at $y \in X$. Then $\mu_p$ is a $\Gamma_p$-invariant Borel probability measure for all $p$, and $\mu_p(A) \ge \mu_p(B)$. Using compactness of the space of all Borel probability measures on $X$, we may take a cluster point $\mu$ of $(\mu_p)$, and $\mu$ is a $\Gamma$-invariant measure such that $\mu(A) \ge \mu(B)$, contradicting our assumption. 

Hence we see that, for any sufficiently large $n \ge m$, any atom $U$ of $\mcA_n$ is such that $a_n(U) < b_n(U)$. From this we obtain the existence of $\gamma \in \Gamma_n$ such that $\gamma(A) \subset B$ (any permutation of the atoms of a column of $\mcA_n$ is induced by an element of $\Gamma_n$).

\eqref{proof:GW2} We may (and do) assume that $A,B$ are nonempty and $A \cap B = \emptyset$; we fix $a \in A$ and $b \in B \cap \Gamma a$ (here we are using topological transitivity). 

Fix a compatible ultrametric $d$ on $X$; we use a back-and-forth argument to build sequences of clopen subsets $(U_n)$, $(V_n)$ of $X$, and a sequence $(\gamma_n)$ of elements of $\Gamma$ such that for all $n$:
\begin{itemize}
    \item  $U_n \subseteq A$, $a \in A \setminus U_n$, and $U_n \cap \bigcup_{i=0}^{n-1} U_i = \emptyset$.
    \item $V_n \subseteq B$, $b \in A \setminus V_n$, and $V_n \cap \bigcup_{i=0}^{n-1} V_i = \emptyset$.
    \item The diameters of $A \setminus \bigcup_{i=0}^n U_i$ and $B \setminus \bigcup_{i=0}^n V_i$ converge to $0$.
    \item For all $n$, $\gamma_n(U_n)=V_n$.
\end{itemize}
Assuming that this is indeed possible, we obtain the desired $g$ by setting $g=\gamma_n$ on $U_n$, $g=\gamma_n^{-1}$ on $V_n$, $g(a)=b$, $g(b)=a$ (and $g$ is the identity outside $A \cup B$).

We use even steps of the process to make the diameter of $X \setminus \bigcup_{i=0}^n U_i$ decrease, and odd steps to control $X \setminus \bigcup_{i=0}^n V_i$; since the conditions are symmetric, let us explain what we do when $U_0,V_0,\ldots,U_{n-1}, V_{n-1}$ have been defined and $n$ is even. Set 
\[\tilde A= A \setminus \bigsqcup_{i=0}^{n-1} U_i \ , \quad \tilde B=B \setminus \bigsqcup_{i=0}^{n-1} V_i\]
Then $a \in \tilde A$, $b \in \tilde B$, and $\mu(\tilde A)=\mu(\tilde B)$ for all $\mu \in M(\Gamma)$.

Pick $\varepsilon >0$ such that $B(a,\varepsilon) \subset \tilde A$. If follows from Lemma \ref{l:props_invariant_measures} that for small enough $\delta >0$, we have 
\[\sup_{\mu \in M(\Gamma)} \mu\left(\tilde A \setminus B(a,\varepsilon)\right) < \inf_{\mu \in M(\Gamma)} \mu \left(\tilde B \setminus B(b,\delta)\right)\]
Set $U_{n+1}= \tilde A \setminus B(a,\varepsilon)$; by \eqref{proof:GW1} we can find $\gamma_{n+1} \in \Gamma$ such that $\gamma_{n+1}(U_{n+1}) \subset \tilde B \setminus B(b,\delta))$. We set $V_{n+1}= \gamma(U_{n+1})$ and move on to the next step.
\end{proof}

% \begin{lemma}\label{l:good_orbit_anywhere}
% Let $\Gamma$ be a minimal ample group, and $U,V \in \Clopen(X)$. For any $x \in U$ and any $y \in V$ there exist $U', V' \in \Clopen(X)$ such that $x \in U' \subseteq U$, $y \in V' \subseteq V$, and $\gamma \in \Gamma$ such that $\gamma(U')=V'$.
% \end{lemma}

% \begin{proof}
% If $x=y$ there is nothing to do; we assume that $x \ne y$ and, since reducing $U,V$ only makes our task harder, that $U \cap V = \emptyset$.

% Find $U' \subseteq U$ with $\sup \{\mu(U') \colon \mu \in M(\Gamma) \} < \inf \{\mu(V) \colon \mu \in M(\Gamma)\}$ and $x \in U'$. By Lemma \ref{l:ample_Glasner_Weiss} there exists $\gamma_1 \in \Gamma$ such that $\gamma_1(U') \subset V$.

% If $y \in \gamma_1(U')$, we set $V'=\gamma_1(U')$. Else, we use the same trick as above to find a clopen $W \subset V$ disjoint from $\Gamma_1(U')$ and containing $y$, as well as $\gamma_2 \in \Gamma$ such that $\gamma_2(W) \subset \gamma_1(U')$. Then we set $V'= (\gamma_1(U') \setminus \gamma_2(W)) \cup W$, and define $\gamma \colon U' \to V'$ by setting 
% \[\gamma(u) = \begin{cases} \gamma_1(u) & \text{ if } u \in U' \setminus \gamma_1^{-1}\gamma_2(W) \\
%                             \gamma_2^{-1}\gamma_1(u) & \text{ if } u \in U' \cap \gamma_1^{-1}\gamma_2(W)
% \end{cases} \]
% Then $\gamma$ extends to an involution, still denoted by $\gamma$, which belongs to $\Gamma$, and $\gamma(U')=V'$.
% \end{proof}

For future reference, we state the original lemma of Glasner and Weiss, whose proof is very similar to the proof of Lemma \ref{l:ample_Glasner_Weiss}.

\begin{lemma}[Glasner--Weiss \cite{Glasner1995a}*{Lemma~2.5}]\label{l:original_Glasner_Weiss}
Let $\varphi$ be a minimal homeomorphism, and let $A,B$ be two clopen sets such that $\mu(A)< \mu(B)$ for any $\mu \in M(\varphi)$. 

Then there exists an integer $N$ such that, whenever $\mcA$ is a Kakutani--Rokhlin partition compatible with $A,B$ and such that all columns of $\mcA$ have height $\ge N$, for any column $\mcC$ of $\mcA$ one has
$$\left|\lset{\alpha \in \mcC \colon \alpha \subseteq A }\right| < \left|\lset{\alpha \in \mcC \colon \alpha \subseteq B} \right| $$
\end{lemma}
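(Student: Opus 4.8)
The plan is to mimic the compactness argument already used in the proof of Lemma~\ref{l:ample_Glasner_Weiss}\eqref{proof:GW1}, but phrased directly in terms of Kakutani--Rokhlin partitions rather than finite unit systems. Recall that for a minimal homeomorphism $\varphi$, the group $\Gamma_x(\varphi)$ is ample, and an exhaustive sequence of finite unit systems for it is essentially the same data as a refining sequence of Kakutani--Rokhlin partitions; in particular the $\Gamma_n$-orbit of an atom $U_{i,j}$ is exactly the set of atoms $\{U_{i,0},\dots,U_{i,n_i-1}\}$ in the same column, so ``$\mcA$-orbit'' in the statement means ``column''.

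First I would argue by contradiction: suppose the conclusion fails for every $N$. Then for each $N$ we get a Kakutani--Rokhlin partition $\mcA^{(N)}$ compatible with $A$ and $B$, all of whose columns have cardinality $\ge N$, together with a column $O^{(N)}$ of $\mcA^{(N)}$ satisfying
\[\left|\{\alpha \in O^{(N)} \colon \alpha \subseteq A\}\right| \ge \left|\{\alpha \in O^{(N)} \colon \alpha \subseteq B\}\right|.\]
Pick any point $x_N$ lying in the base atom of the column $O^{(N)}$, let $n_N = |O^{(N)}|$ be its height, and form the measure
\[\mu_N = \frac{1}{n_N}\sum_{k=0}^{n_N-1}\delta_{\varphi^k(x_N)},\]
which is the orbit-average over the column. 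The key point is that $\mu_N$ is \emph{approximately} $\varphi$-invariant: $\varphi_*\mu_N$ differs from $\mu_N$ only in the two endpoint atoms of the column, so $\|\varphi_*\mu_N - \mu_N\| \le 2/n_N \to 0$. Moreover, because $A$ and $B$ are each a union of atoms of $\mcA^{(N)}$, for a point of the base atom the number of $k\in\{0,\dots,n_N-1\}$ with $\varphi^k(x_N)\in A$ is exactly $|\{\alpha\in O^{(N)}\colon \alpha\subseteq A\}|$ (each such atom is visited exactly once as we run up the column), and similarly for $B$; hence $\mu_N(A)\ge \mu_N(B)$.

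Then I would extract a weak-$*$ cluster point $\mu$ of $(\mu_N)$. Since $\|\varphi_*\mu_N-\mu_N\|\to 0$, the limit $\mu$ is genuinely $\varphi$-invariant, so $\mu\in M(\varphi)$; and since $A,B$ are clopen, $\mu(A)\ge\mu(B)$, contradicting the hypothesis $\mu(A)<\mu(B)$ for all $\mu\in M(\varphi)$. Finally, to recover the uniform integer $N$ I would note that, having ruled out the failure for some threshold, there is an $N$ beyond which every compatible Kakutani--Rokhlin partition with all columns of cardinality $\ge N$ has the desired strict inequality on every column --- this is immediate since the failure statement was itself an existential over all such partitions and all their columns.

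The one genuinely delicate point is the passage from ``$\mu_N$ is only approximately $\varphi$-invariant'' to ``$\mu$ is exactly $\varphi$-invariant''. I would handle it by the standard observation that the map $\nu\mapsto \varphi_*\nu$ is weak-$*$ continuous on the (compact, metrizable) space of Borel probability measures on $X$, so from $\mu_{N_j}\to\mu$ we get $\varphi_*\mu_{N_j}\to\varphi_*\mu$, while $\|\varphi_*\mu_{N_j}-\mu_{N_j}\|\to 0$ forces $\varphi_*\mu_{N_j}\to\mu$ as well, whence $\varphi_*\mu=\mu$. Everything else is bookkeeping about columns of Kakutani--Rokhlin partitions, and in fact the whole argument is the Kakutani--Rokhlin transcription of the unit-system compactness argument already carried out in the proof of Lemma~\ref{l:ample_Glasner_Weiss}.
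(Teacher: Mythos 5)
Your proof is correct and follows essentially the same approach the paper implicitly suggests for this lemma, namely the compactness argument from the proof of Lemma~\ref{l:ample_Glasner_Weiss}\eqref{proof:GW1} transcribed from unit systems to Kakutani--Rokhlin partitions. You correctly identified and handled the one genuine subtlety of the transcription: the column-averaged Dirac measures $\mu_N$ are only approximately $\varphi$-invariant (with error $O(1/n_N)$ coming from the top and base of the column), rather than exactly invariant as in the unit-system version, and you dealt with this via the weak-$*$ continuity of $\varphi_*$.
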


We reformulate Lemma \ref{l:ample_Glasner_Weiss} using the Polish group topology of $\Homeo(X)$; we recall that this topology can be viewed either as the topology of compact-open convergence on $X$, or as the permutation group topology induced by the action of $\Homeo(X)$ on $\Clopen(X)$. Using this last point of view, a neighborhood basis of the identity is given by the subgroups 
$$G_\mcA = \lset{ g \in \Homeo(X) \colon \forall A \in \mcA \ g(A)=A } $$
where $\mcA$ ranges over all clopen partitions of $X$.

\begin{lemma}\label{l:closure_full_group}
Assume that $\Gamma$ is a minimal ample group. Then 
\[\overline{[R_\Gamma]} = \lset{g \in \Homeo(X) \colon \forall \mu \in M(\Gamma) \ g_* \mu=\mu}\]
\end{lemma}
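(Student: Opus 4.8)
The plan is to prove the two inclusions separately, using Lemma~\ref{l:ample_Glasner_Weiss} for the nontrivial direction and the continuity of the action on measures for the easy one.

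\textbf{The inclusion $\overline{[R_\Gamma]} \subseteq \{g : \forall \mu \in M(\Gamma)\ g_*\mu = \mu\}$.} First I would observe that, by Lemma~\ref{l:invariant_measures_full_group}, $M([R_\Gamma]) = M(\Gamma)$, so every element of $[R_\Gamma]$ preserves every $\mu \in M(\Gamma)$. Now the set $\{g \in \Homeo(X) : \forall \mu \in M(\Gamma)\ g_*\mu = \mu\}$ is closed in $\Homeo(X)$: for fixed $\mu$ and fixed clopen $A$, the map $g \mapsto \mu(g(A))$ is continuous on $\Homeo(X)$ (since $g \mapsto g(A)$ is locally constant in the permutation-group topology), so $\{g : g_*\mu = \mu\} = \bigcap_{A \in \Clopen(X)} \{g : \mu(g(A)) = \mu(A)\}$ is closed, and then we intersect over all $\mu \in M(\Gamma)$. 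Hence this closed set contains the closure of $[R_\Gamma]$.

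\textbf{The inclusion $\{g : \forall \mu \in M(\Gamma)\ g_*\mu = \mu\} \subseteq \overline{[R_\Gamma]}$.} Fix $g$ with $g_*\mu = \mu$ for all $\mu \in M(\Gamma)$, and fix a basic neighborhood $gG_\mcA$ of $g$, where $\mcA = \{A_1,\dots,A_k\}$ is a clopen partition of $X$; I must produce $h \in [R_\Gamma]$ with $h \in gG_\mcA$, i.e.\ $h(A_i) = g(A_i)$ for each $i$. Set $B_i = g(A_i)$; then $(B_i)$ is also a clopen partition of $X$, and for every $\mu \in M(\Gamma)$ and every $i$ we have $\mu(A_i) = \mu(g(A_i)) = \mu(B_i)$ since $g$ preserves $\mu$. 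Now I apply part~\eqref{proof:GW2} of Lemma~\ref{l:ample_Glasner_Weiss} — but I need it simultaneously for all the pieces of the partition, so I would either invoke a partition-version of that lemma or build $h$ directly by a finite induction: having matched $A_1,\dots,A_{j-1}$ to $B_1,\dots,B_{j-1}$ by an element of $[R_\Gamma]$, one is reduced to matching clopen sets of equal measure inside the complement, and the back-and-forth argument of Lemma~\ref{l:ample_Glasner_Weiss}\eqref{proof:GW2} applies relative to the restriction. Gluing the resulting finitely many partial homeomorphisms (each a piece of an element of $[R_\Gamma]$) gives $h \in [R_\Gamma]$ with $h(A_i) = B_i = g(A_i)$ for all $i$, so $h \in gG_\mcA$. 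Since $gG_\mcA$ was arbitrary, $g \in \overline{[R_\Gamma]}$.

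\textbf{Main obstacle.} The delicate point is upgrading Lemma~\ref{l:ample_Glasner_Weiss}\eqref{proof:GW2}, which matches a \emph{single} pair of equal-measure clopen sets, to a simultaneous matching of two clopen \emph{partitions} $(A_i)$ and $(B_i)$ with $\mu(A_i) = \mu(B_i)$ for all $i$ and all $\mu$. The inductive/relative argument sketched above should work, but one must be careful that at each stage the hypothesis "$\mu$ of the remaining part of $A_j$ equals $\mu$ of the remaining part of $B_j$, for all invariant $\mu$" is genuinely preserved, and that minimality of $\Gamma$ (or topological transitivity) still holds on the relevant residual clopen sets so that the fixed points $a, b$ needed in the back-and-forth can be chosen in the same $R_\Gamma$-class — this is exactly where minimality, rather than mere equality of measures, is used.
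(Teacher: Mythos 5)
Your proof is correct in outline and uses the same two tools as the paper: Lemma~\ref{l:invariant_measures_full_group} together with closedness of $\{g : g_*\mu=\mu\}$ for the easy inclusion, and Lemma~\ref{l:ample_Glasner_Weiss}\eqref{proof:GW2} for the hard one. However, the ``main obstacle'' you single out --- the need to upgrade Lemma~\ref{l:ample_Glasner_Weiss}\eqref{proof:GW2} from a single pair of clopen sets to a simultaneous matching of two partitions --- is not actually an obstacle, and the inductive / relative argument you sketch is unnecessary. The paper's proof is a one-shot gluing: for each atom $A \in \mcA$ separately, Lemma~\ref{l:ample_Glasner_Weiss}\eqref{proof:GW2} yields $h_A \in [R_\Gamma]$ with $h_A(A)=g(A)$; then one defines $h$ by setting $h(x)=h_A(x)$ for $x \in A$. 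Because $g$ is a homeomorphism, the sets $g(A)$, $A\in\mcA$, form a clopen partition of $X$, so $h$ is a well-defined homeomorphism, and $h\in[R_\Gamma]$ because $[R_\Gamma]$ is a full group. There is nothing ``simultaneous'' to verify, no residual measure hypotheses to propagate, and no issue of restricting transitivity to a sub-clopen set. The only ingredient beyond Lemma~\ref{l:ample_Glasner_Weiss}\eqref{proof:GW2} is the fullness of $[R_\Gamma]$, which you in fact already invoke in your last gluing step --- once you notice that, the induction collapses and you recover the paper's argument.
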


\begin{proof}
Any element $g$ of $[R_\Gamma]$ must be such that $g_* \mu = \mu$ for all $\mu \in M(\Gamma$) (see Lemma \ref{l:invariant_measures_full_group}) and $\{g \colon g_* \mu=\mu\}$ is a closed subset of $\Homeo(X)$ for all $\mu$. This proves the inclusion from left to right.

To see the converse inclusion, take $g$ such that $g_* \mu= \mu$ for all $\mu \in M(\Gamma)$. Let $U$ be a neighborhood of $g$ in $\Homeo(X)$; by definition of the topology of $\Homeo(X)$, there exists a clopen partition $\mcA$ of $X$ such that 
\[\lset{h \in \Homeo(X) \colon \forall A \in \mcA \ h(A)=g(A) } \subseteq U \]
Lemma \ref{l:ample_Glasner_Weiss} shows that for any $A \in \mcA$ there exists $h_A \in [R_\Gamma]$ such that $h_A(A)=g(A)$, whence there exists $h \in [R_\Gamma] \cap U$ obtained by setting $h(x)=h_A(x)$ for all $x \in A$ and all $A \in \mcA$. 
\end{proof}

The heart of the above argument is the fact that, for a full group $G$ contained in $\Homeo(X)$, the closure $\overline{G}$ consists of all homeomorphisms $h$ such that for any $A \in \Clopen(X)$ there exists $g \in G$ such that $h(A)=g(A)$.

The following lemma will help us deduce the classification theorem for minimal homeomorphisms from the classification theorem for minimal ample groups.

\begin{lemma}\label{l:closure_Gamma_x}
Let $\varphi$ be a minimal homeomorphism, and $x \in X$. 

Then $M(\Gamma_x(\varphi))=M(\varphi)$, and
 $$\overline{[R_{\Gamma_x(\varphi)}]}=\overline{[R_\varphi]}=\lset{g \in \Homeo(X) \colon \forall \mu \in M(\varphi) \ g_* \mu=\mu} $$
\end{lemma}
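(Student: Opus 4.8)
\textbf{Proof plan for Lemma \ref{l:closure_Gamma_x}.}

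The plan is to reduce everything to Lemma \ref{l:closure_full_group} applied to the ample group $\Gamma_x(\varphi)$, which is legitimate since $\Gamma_x(\varphi)$ is a minimal ample group (it acts minimally because its orbits, described in the lemma preceding this section, are the forward orbit, the backward orbit, and the full $\varphi$-orbits of points off the orbit of $x$, and each of these is dense by minimality of $\varphi$). So the real content is the identity $M(\Gamma_x(\varphi))=M(\varphi)$ together with $\overline{[R_\varphi]}=\overline{[R_{\Gamma_x(\varphi)}]}$.

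For $M(\Gamma_x(\varphi))=M(\varphi)$: the inclusion $M(\Gamma_x(\varphi)) \subseteq M(\varphi)$ is not literally automatic since $\Gamma_x(\varphi)$ is not all of $F(\varphi)$, but it is easy — any $\mu$ invariant under $\Gamma_x(\varphi)$ satisfies $\mu(\varphi^k(U_{i,0}))=\mu(U_{i,0})$ for the base atoms of $\mcA_n$ with columns tall enough, and since every clopen set is eventually a union of such $\varphi$-translates of base atoms, one gets $\mu(\varphi(A))=\mu(A)$ for all clopen $A$, hence $\mu\in M(\varphi)$. The reverse inclusion $M(\varphi)\subseteq M(\Gamma_x(\varphi))$ is immediate from $\Gamma_x(\varphi)\subseteq F(\varphi)$ together with Lemma \ref{l:invariant_measures_full_group} (or a direct computation as in that lemma): any $\varphi$-invariant measure is invariant under every element of the topological full group, a fortiori under $\Gamma_x(\varphi)$.

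For the equality of closures: first, $\overline{[R_\varphi]}=\{g : \forall \mu\in M(\varphi)\ g_*\mu=\mu\}$ is exactly Lemma \ref{l:closure_full_group} for the minimal ample group... except $\varphi$ is not an ample group. However, $[R_\varphi]=[R_{\Gamma_x(\varphi)}]$ as soon as $R_\varphi = R_{\Gamma_x(\varphi)}$, which fails: $R_{\Gamma_x(\varphi)}$ splits the $\varphi$-orbit of $x$ into $O^+(x)$ and $O^-(x)$. So instead I would argue that $R_\varphi$ and $R_{\Gamma_x(\varphi)}$ differ only by gluing two orbits ($O^+(x)$ and $O^-(x)$, via $\varphi$ sending the point $\varphi^{-1}(x)$ to $x$), hence $[R_{\Gamma_x(\varphi)}] \subseteq [R_\varphi]$ and every element of $[R_\varphi]$ agrees with an element of $[R_{\Gamma_x(\varphi)}]$ off the single $\varphi$-orbit of $x$; since that orbit is meager and, more to the point, any element of $[R_\varphi]$ can be approximated in $\Homeo(X)$ by elements that fix a large clopen piece and hence lie in $[R_{\Gamma_x(\varphi)}]$, we get $\overline{[R_\varphi]} = \overline{[R_{\Gamma_x(\varphi)}]}$. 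Concretely: given $g\in[R_\varphi]$ and a clopen partition $\mcA$, by the remark at the end of the proof of Lemma \ref{l:closure_full_group} it suffices to match $g$ on each $A\in\mcA$ by an element of $[R_{\Gamma_x(\varphi)}]$, and Lemma \ref{l:ample_Glasner_Weiss}\eqref{proof:GW2} (applicable since $M(\Gamma_x(\varphi))=M(\varphi)$ gives $\mu(A)=\mu(g(A))$ for all $\mu\in M(\Gamma_x(\varphi))$, and $\Gamma_x(\varphi)$ is minimal) furnishes $h_A\in[R_{\Gamma_x(\varphi)}]$ with $h_A(A)=g(A)$. Gluing the $h_A$ yields $h\in[R_{\Gamma_x(\varphi)}]$ approximating $g$. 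Then Lemma \ref{l:closure_full_group} applied to $\Gamma_x(\varphi)$ identifies $\overline{[R_{\Gamma_x(\varphi)}]}$ with $\{g:\forall\mu\in M(\Gamma_x(\varphi))\ g_*\mu=\mu\}=\{g:\forall\mu\in M(\varphi)\ g_*\mu=\mu\}$, finishing the proof.

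The main obstacle is the bookkeeping around the discrepancy between $R_\varphi$ and $R_{\Gamma_x(\varphi)}$ — one must be careful that the cluster of $R_{\Gamma_x(\varphi)}$-orbits really does cover every $R_\varphi$-class except by merging exactly two of them, and that this single merge is invisible at the level of closures in $\Homeo(X)$. Everything else is a routine invocation of Lemmas \ref{l:invariant_measures_full_group}, \ref{l:ample_Glasner_Weiss}, and \ref{l:closure_full_group}.
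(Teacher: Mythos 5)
Your overall architecture matches the paper's: establish $M(\Gamma_x(\varphi)) = M(\varphi)$ first, then feed this identity into Lemma \ref{l:closure_full_group}. However, your proof of the inclusion $M(\Gamma_x(\varphi)) \subseteq M(\varphi)$ has a real gap. You argue that $\mu \in M(\Gamma_x(\varphi))$ gives $\mu(\varphi(A)) = \mu(A)$ for all clopen $A$ because ``every clopen set is eventually a union of $\varphi$-translates of base atoms.'' But elements of $\Gamma_n$ are, by construction, confined to the columns of $\mcA_n$: they never move an atom across the top. So $\Gamma_n$-invariance yields $\mu(U_{i,j}) = \mu(U_{i,j+1})$ only for $j < n_i - 1$. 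If the clopen $A$ meets the top of $\mcA_n$, then $\varphi(A)$ involves sending a top atom back to the base, and nothing in $\Gamma_n$ witnesses that transition. Since the tops of $\mcA_n$ shrink down to exactly $\{\varphi^{-1}(x)\}$, your argument only proves $\mu(\varphi(A)) = \mu(A)$ for clopen $A$ with $\varphi^{-1}(x) \notin A$. This is precisely the corner case where the paper has to do extra work: it runs a separate small-diameter/small-measure estimate (using atomlessness of $\mu$, Lemma \ref{l:props_invariant_measures}) to handle $\varphi^{-1}(x) \in A$. Your write-up omits this entirely. It is a fixable gap; indeed there is an even shorter patch than the paper's: once you have the equality for all $A$ not containing $\varphi^{-1}(x)$, apply it to $A^c$ and use $\varphi(A^c) = \varphi(A)^c$ to get the remaining case.

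For the equality of closures you eventually arrive at a correct argument, but the intermediate claim that ``any element of $[R_\varphi]$ can be approximated by elements that fix a large clopen piece and hence lie in $[R_{\Gamma_x(\varphi)}]$'' is confusing and not what the Glasner--Weiss gluing actually produces; the $h$ you build agrees with $g$ on a clopen partition, it does not fix a clopen piece, and fixing a clopen piece would not by itself put $h$ in $[R_{\Gamma_x(\varphi)}]$. The paper's deduction is purely formal once $M(\Gamma_x(\varphi)) = M(\varphi)$ is known: Lemma \ref{l:closure_full_group} identifies $\overline{[R_{\Gamma_x(\varphi)}]}$ with $\{g \colon \forall \mu \in M(\varphi)\ g_*\mu = \mu\}$, this set is closed and visibly contains $[R_\varphi]$, hence contains $\overline{[R_\varphi]}$; the reverse containment is immediate from $[R_{\Gamma_x(\varphi)}] \subseteq [R_\varphi]$. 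No second approximation argument is needed, and you should drop the meager-orbit digression.
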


\begin{proof}
To simplify notation, denote $\Gamma= \Gamma_x(\varphi)$. 

Since $\Gamma \subset [R_\varphi]$, we have $[R_\Gamma] \subseteq [R_\varphi]$ and
$M(\varphi)=M([R_\varphi]) \subseteq M(\Gamma) $.

Pick $\mu \in M(\Gamma)$, and let $U \in \Clopen(X)$; assume first that $\varphi(U) \cap U = \emptyset$, and $U$ does not contain $\varphi^{-1}(x)$. Then the involution $\gamma$ equal to $\varphi$ on $U$, $\varphi^{-1}$ on $\varphi(U)$ and the identity elsewhere belongs to $\Gamma$. Thus $$\mu(\varphi(U))=\mu(\gamma(U))=\mu(U)$$
If $U \in \Clopen(X)$ is any clopen not containing $\varphi^{-1}(x)$, we can write it (by exhaustion) as a disjoint union of clopens $U_i$ such that $\varphi(U_i) \cap U_i = \emptyset$, so that 
$$\mu(\varphi(U))= \sum_{i=1}^n \mu(\varphi(U_i))=\sum_{i=1}^n \mu(U_i)=\mu(U)$$

Finally, if $\varphi^{-1}(x) \in U$, we can find a clopen $V$ containing $x$ and such that $V$ and $\varphi(V)$ both have arbitrarily small diameter (for some compatible distance), thus if we fix $\varepsilon >0$ we can find a clopen $V \subset U$ such that $\varphi^{-1}(x) \in V$ and $\mu(V)$, $\mu(\varphi(V))$ are both $< \varepsilon$. Since $\mu(U \setminus V)= \mu(\varphi(U) \setminus \varphi(V))$, we have 
$$|\mu(U) - \mu(\varphi(U))| = |\mu(V) - \mu(\varphi(V))| \le 2\varepsilon $$
This is true for any $\varepsilon >0$, so $\mu(\varphi(U))=\mu(U)$ for any clopen $U$: in other words, $\mu \in M(\varphi)$.

This establishes the first assertion; the second one is an immediate consequence of this, since then we have by Lemma \ref{l:closure_full_group} the equality
$$\overline{[R_{\Gamma_x(\varphi)}]} = \lset{g \in \Homeo(X) \colon \forall \mu \in M(\varphi) \ g_* \mu=\mu}$$
and the right-hand side of this equality is closed and contains $[R_\varphi]$.
\end{proof}

\subsection{A strengthening of Krieger's theorem}
We turn to the proof of a version of Krieger's theorem that is instrumental to our approach. The proof is based on a back-and-forth argument already present in Krieger's proof.

\begin{defn}
Let $\Gamma$ be a subgroup of $\Homeo(X)$. Given $U,V \in \Clopen(X)$ we write $U \sim_\Gamma V$ if there exists $\gamma \in \Gamma$ such that $\gamma(U)=V$.
\end{defn}

\begin{defn}
Let $\Gamma,\Lambda$ be two ample subgroups of $\Homeo(X)$. We say that $\Gamma$, $\Lambda$ have \emph{isomorphic closures} if there exists $g \in \Homeo(X)$ such that $g \overline{\Gamma} g^{-1}= \overline{\Lambda}$ or, equivalently,
\[\forall U,V \in \Clopen(X) \quad \left( U \sim_\Gamma V\right) \Leftrightarrow \left(g(U) \sim_\Lambda g(V) \right) \]

\end{defn}

The fact that both conditions in the previous definition are equivalent follows from the remark immediately following Lemma \ref{l:closure_full_group}; they are also equivalent (in our context) to what Krieger calls \enquote{isomorphism of dimension ranges}, though we formulate it in the way which we find most suitable for our purposes in this article.

\begin{defn}
Let $\Gamma$ be a minimal ample group. We say that a closed subset $K$ of $X$ is \emph{$\Gamma$-sparse} if each $\Gamma$-orbit intersects $K$ in at most one point. For such a $K$, we say that a finite unit system $(\mcA,\Sigma)$ with $\Sigma\le\Gamma$ is \emph{$K$-compatible} if any $\Sigma$-orbit (for the action of $\Sigma$ on the atoms of $\mcA$) has at most one element which intersects $K$. 
\end{defn}

Our aim in this subsection is to prove the following strengthening of Krieger's theorem \cite{Krieger1979}*{Theorem~3.5}. This result plays a crucial role in our proof of the classification theorem for minimal ample groups.

\begin{theorem}\label{t:Krieger_pointed}
Let $\Gamma$, $\Lambda$ be minimal ample groups with isomorphic closures. Let $K,L$ be closed subsets of $X$ such that $K$ is $\Gamma$-sparse, and $L$ is $\Lambda$-sparse. Assume that $h \colon K \to L$ is a homeomorphism.

Then there exists $g \in \Homeo(X)$ such that $g\Lambda g^{-1}= \Gamma$, and $g_{|K}=h$.
\end{theorem}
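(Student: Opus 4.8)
The plan is to prove Theorem \ref{t:Krieger_pointed} by a back-and-forth construction over exhaustive sequences of finite unit systems, carrying along a compatibility with the prescribed homeomorphism $h \colon K \to L$ at every finite stage. First I would fix an exhaustive sequence $(\mcA_n, \Gamma_n)$ of finite unit systems for $\Gamma$ (using Lemma \ref{l:Krieger_unit_systems}) and similarly $(\mcB_n, \Lambda_n)$ for $\Lambda$. The goal is to build a refining sequence of finite clopen partitions $\mcC_n$ of $X$, finite subgroups $\Sigma_n \le \Gamma$ and $\Delta_n \le \Lambda$, and homeomorphisms $g_n$ such that: $(\mcC_n, \Sigma_n)$ is a $K$-compatible finite unit system refining $(\mcA_n, \Gamma_n)$; $(g_n(\mcC_n), \Delta_n)$ is an $L$-compatible finite unit system refining $(\mcB_n, \Lambda_n)$; $g_n$ conjugates $\Sigma_n$ onto $\Delta_n$; and $g_n$ approximates $h$ better and better on $K$ while the partitions $\mcC_n$ separate the points of $X$ in the limit. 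The homeomorphism $g = \lim g_n$ will then satisfy $g\Sigma g^{-1} = \Delta$ where $\Sigma = \bigcup \Sigma_n$ is ample with the same clopen orbit structure as $\Gamma$, hence (by the isomorphic-closures hypothesis and Krieger's original theorem, or by absorbing this into the induction) $g \Gamma g^{-1}$ can be arranged to equal $\Lambda$, and $g_{|K} = h$.

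The key technical device is the notion of \emph{$K$-compatible} unit system: since $K$ is $\Gamma$-sparse, every $\Gamma$-orbit of a point meets $K$ at most once, and one can always refine a finite unit system $(\mcA, \Sigma)$ with $\Sigma \le \Gamma$ to a finer one that is $K$-compatible — intuitively, cut the atoms finely enough that within each $\Sigma$-orbit of atoms at most one atom touches $K$; this uses compactness of $K$ and clopenness of the atoms, together with the fact that distinct points of $K$ lie in distinct $\Gamma$-orbits so they are eventually separated by atoms lying in distinct orbits. The analogous statement holds for $L$ and $\Lambda$. Given a $K$-compatible $(\mcC, \Sigma)$ and an $L$-compatible $(\mcD, \Delta)$ with $|\mcC|=|\mcD|$ and a bijection of atoms matching $\Sigma$-orbit structure to $\Delta$-orbit structure, and compatible with the partial data already committed about $g$ on $K$, one builds a homeomorphism realizing the conjugacy: map each atom of $\mcC$ homeomorphically to the corresponding atom of $\mcD$, being careful that on the (at most one per orbit) atom meeting $K$ the map restricts to $h$ on that piece of $K$. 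Because $h$ is a homeomorphism between closed $\Gamma$- resp. $\Lambda$-sparse sets, the "at most one atom per orbit meets $K$" condition guarantees there is never a conflict between the orbit-matching requirement and the requirement $g_{|K} = h$.

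In the back-and-forth, at even stages one ensures $\mcC_n$ refines $\mcA_n$ and has diameter of atoms going to zero (so $g$ is well-defined and continuous), and that $\Sigma_n$ contains $\Gamma_n$ suitably; at odd stages one ensures, via the isomorphic-closures hypothesis together with Lemma \ref{l:ample_Glasner_Weiss} (or rather its combinatorial content about matching atoms across $\Sigma$-orbits of equal "size" in $M$-measure), that $g_n(\mcC_n)$ refines $\mcB_n$ and that $\Delta_n$ catches up with $\Lambda_n$. One must keep the partitions $K$-compatible and $L$-compatible throughout the refinement, and keep the already-constructed approximation to $h$ unchanged on the atoms that have been "frozen." The limit homeomorphism $g$ then conjugates $\Sigma := \bigcup \Sigma_n$ to $\Delta := \bigcup \Delta_n$; since $(\mcC_n, \Sigma_n)$ exhausts $\Clopen(X)$ and each $\Gamma_n \le \Sigma_n \le \Gamma$, we get $\Sigma = \Gamma$, and symmetrically $\Delta = \Lambda$, so $g\Gamma g^{-1} = \Lambda$, and by construction $g_{|K} = h$.

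The main obstacle I anticipate is the bookkeeping at odd stages: matching a refined $K$-compatible unit system for $\Gamma$ with an $L$-compatible one for $\Lambda$ while simultaneously (i) respecting the isomorphism of closures, which only guarantees that clopen sets matchable by $\Gamma$ correspond to clopen sets matchable by $\Lambda$ — this has to be upgraded to a compatible \emph{simultaneous} matching of whole unit systems, the way Krieger does it — and (ii) not disturbing the partial definition of $g$ already committed on the part of $K$ handled so far, and (iii) preserving sparseness-compatibility after refinement. Reconciling (i) with the rigidity imposed by forcing $g_{|K} = h$ is precisely the new content beyond Krieger's theorem; the point that makes it work is that the constraint coming from $h$ is "one-dimensional" (it lives on the nowhere-dense sets $K$, $L$), so it only ever pins down the behaviour of $g$ on at most one atom per orbit and thus leaves enough freedom in the remaining atoms to carry out the orbit-matching dictated by the isomorphism of closures.
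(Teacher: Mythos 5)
Your high-level plan matches the paper's proof closely: both proceed by a back-and-forth over $K$-compatible and $L$-compatible finite unit systems, building a nested sequence of orbit-respecting finite Boolean-algebra isomorphisms that converge (via Stone duality, or equivalently your ``diameter of atoms going to zero'') to the conjugating homeomorphism, with the constraint $g_{|K}=h$ enforced one atom per orbit at a time. You also correctly identify the main obstacle, namely reconciling the rigidity of orbit-matching (coming from the isomorphic-closures hypothesis) with the rigidity imposed by $h$.

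However, the step that resolves this obstacle is left to a heuristic (``the constraint is one-dimensional, so there is enough freedom'') and it is precisely the nontrivial part. Concretely, at the extension stage you must produce, for each atom $C$ of the refined partition that is the unique atom of its orbit meeting $K$ (say $C \subseteq A_\rho$), a clopen set $U(C)$ contained in the target atom $\Phi(A_\rho)$ with $U(C) \sim_\Gamma C$ and, \emph{simultaneously}, $U(C) \cap L = h(C \cap K)$. That these two constraints can be met at once is the content of Lemma~\ref{l:clopen_intersecting_K}, whose proof uses sparseness not merely combinatorially (``at most one atom per orbit meets $K$'') but measure-theoretically: a $\Gamma$-sparse closed set is contained in clopens of arbitrarily small $\mu$-measure for every $\mu \in M(\Gamma)$, which is what lets you invoke Lemma~\ref{l:ample_Glasner_Weiss} to steer a clopen of the prescribed $\sim_\Gamma$-type into the prescribed position relative to $L$. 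Without supplying an argument of this kind, your assertion that ``there is never a conflict'' between orbit-matching and $g_{|K}=h$ is unjustified, and the back-and-forth step cannot actually be carried out. A secondary, presentational point: rather than constructing approximating homeomorphisms $g_n$ atom-by-atom (which you would then have to re-modify at each later stage), it is cleaner to build only the finite Boolean-algebra isomorphisms $\Phi_n$ and extract $g$ once at the end by Stone duality, as the paper does.
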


\begin{remark}
Although we do not need this result here, we note that whenever $\varphi$ is a minimal homeomorphism of $X$, $\Gamma_x(\varphi)$ and $\Gamma_y(\varphi)$ induce the same relation on $\Clopen(X)$ for any $x,y \in X$ (see \cite{Robert2019} for an elementary proof), and it then follows from Theorem \ref{t:Krieger_pointed} (with $K=L=\emptyset$) that they are conjugate.
\end{remark}

We begin working towards the proof of Theorem \ref{t:Krieger_pointed}. 

\begin{lemma}\label{l:suitable_unit_systems}
Let $\Gamma$ be a minimal ample group, and $K$ a $\Gamma$-sparse closed subset of $X$.

\begin{itemize}
    \item For any finite unit system $(\mcA,\Sigma)$ with $\Sigma\le\Gamma$, there exists a $K$-compatible finite unit system $(\mcB,\Sigma)$ with $\mcB$ refining $\mcA$.
    \item There exists a refining sequence $(\mcB_n,\Gamma_n)$ of $K$-compatible finite unit systems such that $\Clopen(X)=\bigcup \mcB_n$ and $\Gamma=\bigcup \Gamma_n$.

\end{itemize}

\end{lemma}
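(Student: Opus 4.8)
The plan is to prove the first assertion and then bootstrap it to the second. Throughout, write $\mathrm{Fix}(\sigma)=\{x\in X:\sigma(x)=x\}$, a clopen set since $\Gamma$ is ample.

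\emph{First assertion.} Fix a finite unit system $(\mcA,\Sigma)$ with $\Sigma\le\Gamma$; I will produce a $K$-compatible finite unit system $(\mcB,\Delta)$ with $\mcA\subseteq\mcB$ and $\Sigma\le\Delta\le\Gamma$. (One cannot in general keep the group equal to $\Sigma$: a proper $\Sigma$-invariant refinement of the atoms of $\mcA$ typically destroys the third unit-system axiom, so the group must be allowed to grow.) Since $\Sigma\le\Gamma$, the set $K$ is $\Sigma$-sparse, and hence for every $\sigma\in\Sigma$ one has $\sigma(K)\cap K\subseteq\mathrm{Fix}(\sigma)$: a point $y\in\sigma(K)\cap K$ lies in $K$ together with $\sigma^{-1}(y)\in K$, in a single $\Gamma$-orbit, so $\Gamma$-sparseness forces $y=\sigma^{-1}(y)$. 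The key step is to find a clopen set $D\supseteq K$ with $\sigma(D)\cap D\subseteq\mathrm{Fix}(\sigma)$ for every $\sigma\in\Sigma$. It suffices to do this one $\sigma$ at a time and intersect (since $\sigma(D\cap D')\cap(D\cap D')\subseteq\sigma(D)\cap D$); for $\sigma=1$ take $X$, and for $\sigma\ne1$ put $K'=K\setminus\mathrm{Fix}(\sigma)$, a compact set disjoint from $\sigma(K')=\sigma(K)\setminus\mathrm{Fix}(\sigma)$ inside the clopen $\sigma$-invariant set $X\setminus\mathrm{Fix}(\sigma)$. As $K'$ is the intersection of its clopen supersets, $\bigcap_E\bigl(E\cap\sigma(E)\bigr)=K'\cap\sigma(K')=\emptyset$, where $E$ ranges over clopen sets with $K'\subseteq E\subseteq X\setminus\mathrm{Fix}(\sigma)$; by compactness finitely many of the clopen sets $E\cap\sigma(E)$ already have empty intersection, so their common refinement $E_\sigma$ satisfies $K'\subseteq E_\sigma\subseteq X\setminus\mathrm{Fix}(\sigma)$ and $E_\sigma\cap\sigma(E_\sigma)=\emptyset$; then $D_\sigma:=E_\sigma\cup\mathrm{Fix}(\sigma)$ is as required for $\sigma$ (split along $\mathrm{Fix}(\sigma)$ to check), and $D:=\bigcap_{\sigma\in\Sigma}D_\sigma$ works.

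Now, using Lemma \ref{l:Krieger_unit_systems}, choose a far enough term $(\mcB,\Delta_0)$ of an exhaustive sequence of finite unit systems for $\Gamma$ so that the Boolean algebra $\mcB$ contains $\mcA$ and $D$, and $\Sigma\le\Delta_0$. Let $\Delta$ be the set of $g\in\Delta_0$ that are \emph{locally in $\Sigma$ on $\mcB$}: for every atom $B$ of $\mcB$ there is $\sigma_B\in\Sigma$ with $g_{|B}=(\sigma_B)_{|B}$. Then $\Delta$ is a subgroup of $\Delta_0$ containing $\Sigma$, and $(\mcB,\Delta)$ is a unit system: the first two axioms are inherited from $(\mcB,\Delta_0)$, and for the third, a homeomorphism locally in $\Delta$ on the atoms of $\mcB$ is a fortiori locally in $\Sigma\le\Delta_0$ on them, hence lies in $\Delta_0$ by the third axiom for $(\mcB,\Delta_0)$, hence in $\Delta$. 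Finally $(\mcB,\Delta)$ is $K$-compatible: if an atom $B$ of $\mcB$ meets $K$ then $B\subseteq D$ (as $D$ is a union of atoms of $\mcB$), and if $g\in\Delta$ is such that $g(B)$ meets $K$ too, then writing $g_{|B}=\sigma_{|B}$ with $\sigma\in\Sigma$ we get $\sigma(B)=g(B)\subseteq D$, so $B\subseteq D\cap\sigma^{-1}(D)=\sigma^{-1}\bigl(\sigma(D)\cap D\bigr)\subseteq\mathrm{Fix}(\sigma)$, whence $g(B)=\sigma(B)=B$. So every $\Delta$-orbit of atoms of $\mcB$ contains at most one atom meeting $K$.

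\emph{Second assertion.} Iterate, using Lemma \ref{l:Krieger_unit_systems} to absorb new clopen sets and new group elements, and the first assertion to restore $K$-compatibility. Fix enumerations $(U_n)$ of $\Clopen(X)$ and $(\gamma_n)$ of $\Gamma$ and start from the trivially $K$-compatible unit system $(\{\emptyset,X\},\{1\})$. Given a $K$-compatible finite unit system $(\mcB_n,\Gamma_n)$ with $\Gamma_n\le\Gamma$, Lemma \ref{l:Krieger_unit_systems} provides (via a far enough term of an exhaustive sequence for $\Gamma$) a finite unit system refining it, with group still in $\Gamma$, whose Boolean algebra contains $U_n$ and whose group contains $\gamma_n$; applying the first assertion to that unit system yields a $K$-compatible finite unit system $(\mcB_{n+1},\Gamma_{n+1})$ refining it, with $\Gamma_{n+1}\le\Gamma$. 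By transitivity of refinement, $(\mcB_{n+1},\Gamma_{n+1})$ refines $(\mcB_n,\Gamma_n)$, and $U_n\in\mcB_{n+1}$, $\gamma_n\in\Gamma_{n+1}$; hence $\bigcup_n\mcB_n=\Clopen(X)$ and $\bigcup_n\Gamma_n=\Gamma$.

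\emph{Main obstacle.} The delicate points are the construction of the single clopen set $D$ separating $K$ from all its $\Sigma$-translates modulo fixed-point sets, and the structural realization that refining the algebra forces one to enlarge the group to its ``locally in $\Sigma$'' hull $\Delta$; once the pair $(\mcB,\Delta)$ is identified, the unit-system axioms and $K$-compatibility follow by direct checks.
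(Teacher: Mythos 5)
Your proof is correct, and you have caught a genuine subtlety that the paper's statement and proof gloss over. The first bullet cannot hold verbatim with the group held fixed: once $\mcA$ is refined to a strictly finer $\Sigma$-invariant algebra $\mcB$, the third unit-system axiom for $(\mcB,\Sigma)$ typically fails, since a homeomorphism agreeing locally with elements of $\Sigma$ on atoms of $\mcB$ but choosing \emph{different} elements on different pieces of a single atom of $\mcA$ need not itself lie in $\Sigma$ (already with $\mcA=\{A,B\}$, $\Sigma=\{1,\sigma\}$, $\sigma$ swapping $A,B$, and $A=A_1\sqcup A_2$, the map that is the identity on $A_1\cup\sigma(A_1)$ and $\sigma$ on $A_2\cup\sigma(A_2)$ is a counterexample). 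Allowing the group to grow to some $\Delta$ with $\Sigma\le\Delta\le\Gamma$, as you do, is the correct conclusion, is all the second bullet (and the rest of the paper) actually uses, and is consistent with how the paper itself enlarges the group at each stage of the proof of Lemma \ref{l:Krieger_unit_systems}.

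Beyond that fix, your construction also differs in method from the paper's. The paper chooses representatives $U_i$ of the $\mcA$-orbits, cuts each $U_i$ into pieces $U_{i,j}$ so that the $\Sigma$-tower over each piece meets $K$ at most once (a compactness argument it does not spell out), and lets $\mcB$ be generated by the $\Sigma$-translates of the pieces. You instead isolate the geometry into a single clean fact --- the existence of a clopen $D\supseteq K$ such that $\sigma(D)\cap D\subseteq\{x:\sigma(x)=x\}$ for every $\sigma\in\Sigma$, obtained from $\Gamma$-sparseness, ampleness of $\Gamma$, and compactness --- then grab a far enough term of an exhaustive sequence of finite unit systems for $\Gamma$ whose algebra contains $D$ and $\mcA$, and restrict to the subgroup of elements locally in $\Sigma$ on that algebra. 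The paper's route is more explicit about the atoms of $\mcB$; yours is more modular, makes the unit-system axioms and $K$-compatibility verifications essentially automatic, and sidesteps the third-axiom issue. Your treatment of the second bullet is the same as the paper's (iterate absorbing clopens and group elements via Lemma \ref{l:Krieger_unit_systems}, restoring $K$-compatibility via the first bullet).
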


\begin{proof} 
Fix a finite unit system $(\mcA,\Sigma)$ with $\Sigma \le \Gamma$.

Choose $U_1,\ldots,U_k$ representatives of the $\mcA$-orbits. For each $i$, the $\mcA$-orbit of $U_i$ is of the form $U_i \sqcup \gamma_{i,1} U_i \sqcup \gamma_{i,k_i} U_i$. For all $x$ in $U_i$, at most one element of $\{x,\gamma_{1,1}(x),\ldots,\gamma_{i,k_i}(x)\}$ can belong to $K$. 

So we can write $U_i = \bigsqcup_{j=1}^{n_i} U_{i,j}$, where, for every $j$, $U_{i,j} \sqcup \gamma_{i,1} U_{i,j} \ldots \sqcup \gamma_{i,k_i} U_{i,j}$ intersects $K$ in at most one point. 

Finally, let $\mcB$ be the algebra whose atoms are the $\gamma (U_{i,j})$, $\gamma \in \Sigma$.

This proves the first part of the lemma's statement; the second part immediately follows from the first part and Lemma \ref{l:Krieger_unit_systems}.
\end{proof}

\begin{lemma}\label{l:clopen_intersecting_K}
Let $\Gamma$ be a minimal ample group, and $K$ be a $\Gamma$-sparse closed subset of $X$.

Let $U$ be a nontrivial clopen subset of $X$, and $A$ be a clopen subset of $K$. Let $V \in \Clopen(X)$ be such that $A \subset V \cap K$ and $\mu(U) < \mu(V)$ for all $\mu \in M(\Gamma)$.

There exists $U' \in \Clopen(X)$ such that $U' \cap K=A$, $U' \subset V$, and $U' \sim_\Gamma U$.
\end{lemma}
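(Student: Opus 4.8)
The plan is to use the Glasner--Weiss machinery (Lemma~\ref{l:ample_Glasner_Weiss}) together with a preliminary reduction on a small clopen piece around $A$. First I would note that since $K$ is $\Gamma$-sparse, the trace of $\Clopen(X)$ on $K$ forms a basis for the (compact, zero-dimensional) topology of $K$, so $A$ is itself clopen in $K$. Pick a $K$-compatible refining sequence of finite unit systems $(\mcB_n,\Gamma_n)$ as provided by Lemma~\ref{l:suitable_unit_systems}. For $n$ large, $U$ and $V$ become unions of atoms of $\mcB_n$, and the strict inequality $\mu(U)<\mu(V)$ together with the compactness argument in the proof of Lemma~\ref{l:ample_Glasner_Weiss}\eqref{proof:GW1} gives, for $n$ large enough, that on every $\Gamma_n$-orbit $O$ of atoms one has $|\{\alpha\in O:\alpha\subseteq U\}|<|\{\alpha\in O:\alpha\subseteq V\}|$.

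Next I would build $U'$ orbit-by-orbit. Fix a $\Gamma_n$-orbit $O$ of atoms of $\mcB_n$; by $K$-compatibility at most one atom $\alpha_O\in O$ meets $K$. Write $U\cap O$-part as a subset of $O$ of size $a_O$ and $V\cap O$-part of size $b_O$, with $a_O<b_O$. I want to choose a size-$a_O$ subfamily $S_O\subseteq O$ with $\bigsqcup S_O\subseteq V\cap O$ and such that $\bigsqcup S_O$ has the prescribed intersection with $A$: namely, the atom $\alpha_O$ (if it exists and meets $V$) should be included in $S_O$ exactly when $A\cap\alpha_O\ne\emptyset$ --- but here is the subtlety, since $A$ need not be a union of atoms of $\mcB_n$. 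So instead I refine the construction: first further refine the unit system (still $K$-compatibly, using the first bullet of Lemma~\ref{l:suitable_unit_systems}) so that, in addition, $V$ is a union of atoms and each atom meeting $K$ is small; then inside the single atom $\alpha_O$ carrying the relevant point of $K$, I carve out a clopen subset whose intersection with $K$ is exactly $A\cap\alpha_O$ --- this is possible because $A$ is clopen in $K$, so $A\cap\alpha_O = W\cap K$ for some clopen $W\subseteq\alpha_O$. Replacing $\alpha_O$ by $W$ (and discarding $\alpha_O\setminus W$) throughout the orbit $O$ via the $\Gamma_n$-action, I obtain a clopen subset $U'_O\subseteq V$ with $U'_O\cap K = A\cap O$ and with $|U'_O$-atoms$|$ still controlled; a counting argument (using $a_O<b_O$, and adding or removing full atoms not meeting $K$ as needed) lets me arrange $|U'_O|$-support to have the same $\Gamma_n$-orbit-count as $U\cap O$. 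Then a suitable element of $\Gamma_n$ (any permutation of atoms within a column is realized in $\Gamma_n$) maps $U\cap O$ onto $U'_O$.

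Taking $\gamma\in\Gamma_n$ to be the union over all orbits $O$ of these partial permutations and setting $U'=\gamma(U)$, we get $U'\subseteq V$, $U'\sim_\Gamma U$, and $U'\cap K = \bigsqcup_O (A\cap O) = A$.

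\textbf{Main obstacle.} The delicate point is the interaction between the two ``resolutions'' at play: the clopen set $A$ prescribing the trace on $K$ need not respect the atoms of any given unit system, whereas the Glasner--Weiss counting argument (which provides the room $a_O<b_O$ to move $U$ inside $V$) lives at the level of atoms. Reconciling them forces the refinement step where one subdivides the (unique, by $K$-compatibility) atom in each orbit that meets $K$, carving it along $A$; one must check that this subdivision can be propagated $\Gamma$-equivariantly along the orbit without destroying either the strict-inequality book-keeping or the $K$-compatibility, and that the leftover counting (we only need an \emph{inclusion} $U'\subseteq V$, not equality, which gives slack) can always be balanced. The hypothesis that $U$ is nontrivial is used to ensure $a_O\ge 1$ is not forced to be $0$ in a degenerate way and, more importantly, guarantees $\mu(U)>0$ so that the strict inequality $\mu(U)<\mu(V)$ is not vacuous.
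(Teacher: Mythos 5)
Your plan is genuinely different in structure from the paper's argument, and it has real gaps. You propose an orbit-by-orbit bookkeeping construction, choosing $a_O$ atoms from $V\cap O$ in each orbit $O$ of a $K$-compatible unit system, carving along $A$ where needed. The paper instead exploits a key preliminary observation that your sketch does not contain: because an ample group acting minimally is aperiodic, orbits of a refining sequence of finite unit systems have size tending uniformly to infinity, hence for every $N$ there is a clopen $B\supseteq K$ with $\mu(B)\le 1/N$ for all $\mu\in M(\Gamma)$. This lets the paper pick clopen $D\supseteq A$ and $E\supseteq K\setminus A$, disjoint, with $D\cap K=A$, $E\cap K=K\setminus A$, $\mu(D)<\mu(U)$, $\mu(E)<\mu(X\setminus U)$; one application of Glasner--Weiss moves $D$ into $U$ and $E$ into $X\setminus U$, so $W=\gamma^{-1}(U)$ already satisfies $W\cap K=A$, and a second application fixes up $W\setminus V$ inside $V$ away from $K$. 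No orbit-by-orbit accounting is needed.

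Your construction, by contrast, hits a genuine obstruction you do not address: you need $|U'\cap O|=a_O$ for each orbit $O$, and you need to \emph{force} the atom $\alpha_O$ meeting $K$ (or the carved piece $W_O\subseteq\alpha_O$) into $U'$ whenever $\alpha_O\cap K\subseteq A$. But nothing you have said rules out $a_O=0$ for such an orbit --- there is no reason the fixed clopen $U$ should meet every $\Gamma_n$-orbit for the particular $n$ you have chosen. Ruling this out is precisely the same ``orbits grow'' compactness argument the paper uses (a cluster point of empirical measures of orbits missing $U$ would be an invariant measure vanishing on $U$, contradicting minimality); the ``$U$ nontrivial, so $\mu(U)>0$'' remark in your last paragraph gestures at this but does not do the work. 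A second, smaller gap: if you refine the unit system to be compatible with a clopen $C$ with $C\cap K=A$ (which you are entitled to do), then each $\alpha_O$ has $\alpha_O\cap K\subseteq A$ or $\alpha_O\cap K\subseteq K\setminus A$, and the whole ``carve $W\subseteq\alpha_O$ and propagate'' step --- the source of the remaining bookkeeping you flag as ``to be checked'' --- becomes unnecessary; you should simply include or exclude whole atoms. With both fixes your approach does go through, but as written the proposal is incomplete at exactly the two points where the paper inserts nontrivial ideas.
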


\begin{proof}
Fix some integer $N$. Let $(\Gamma_n)$ be an increasing sequence of finite groups such that $\bigcup_n \Gamma_n=\Gamma$. Assume first, for a contradiction, that for all $n$ there exists $x_n \in X$ such that $\Gamma_n x_n$ has cardinality $<N$. By compactness, we may assume that $(x_n)$ converges to $x$; since $\Gamma$ acts aperiodically, we can find $\gamma_1,\ldots,\gamma_N$ such that $\gamma_i (x) \ne \gamma_j(x)$ for all $i \ne j$. Then we also have $\gamma_i(x_n) \ne \gamma_j(x_n)$ for $n$ large enough and $i \ne j$, which is the desired contradiction.

Thus, for any $n$ large enough, every $\Gamma_n$-orbit has cardinality $\ge N$. Thus there exists a finite unit system $(\mcA,\Sigma)$ with $\Sigma \le \Gamma$ such that every orbit for the action of $\Sigma$ on the set of atoms of $\mcA$ has cardinality $\ge N$.
This allows us to find $\gamma \in \Gamma$ such that every $x \in X$ has a $\gamma$-orbit of cardinality $\ge N$. 

Since $K, \ldots, \gamma^{N-1}(K)$ are closed and pairwise disjoint, we can find some disjoint clopens $B_i$ such that $\gamma^i(K) \subseteq B_i$ for all $i \in \{0,\ldots,N-1\}$. Then $B=B_0 \cap \bigcup_{i=1}^{N-1} \gamma^{-i}(B_i)$ is clopen, contains $K$, and $B, \ldots, \gamma^{N-1}(B)$ are pairwise disjoint.

So there exists $B \in \Clopen(X)$ containing $K$ and such that $\mu(B) \le \frac{1}{N}$ for all $\mu \in K$. Hence we can find $B$ clopen, containing $K$, and such that $\mu(B) < \mu(U)$ for all $\mu \in M(\Gamma)$.

There exists $C \in \Clopen(X)$ such that $A= K\cap C$, so $D=B \cap C$ is clopen, $D \cap K=A$, and $\mu(D)< \mu(U)$ for all $\mu \in M(\Gamma)$. Similarly we can find $E \in \Clopen(X)$ such that $E \cap K = K \setminus A$, $\mu(E)< \mu(X \setminus U)$ for all $\mu \in M(\Gamma)$, and $E \cap D= \emptyset$. 

There exists $\gamma \in \Gamma$ such that $\gamma(D) \subset U$, $\gamma(E) \subset X \setminus U$; set $W= \gamma^{-1}(U)$. We have $W \cap K=A$ and $W \sim_\Gamma U$.

Set $V'= V \cap W$; it is clopen, contained in $V$, and $V' \cap K=A$. Since there are clopen subsets of arbitrarily small measures containing $K$, there exists a clopen $V''$ contained in $V$, disjoint from $K$, and such that $\mu(U)< \mu(V'')$ for all $\mu \in M(\Gamma)$. So $\mu(W \setminus V') < \mu(V'' \setminus V')$ for all $\mu \in M(\Gamma)$. Hence there exists $Y \sim_\Gamma W \setminus V'$ such that $Y \subset V'' \setminus V'$. We may finally set $U'= V' \sqcup Y$.
\end{proof}

\begin{nota*}
For the remainder of this section, we fix two minimal ample groups $\Gamma$, $\Lambda$, and assume that $\sim_\Gamma$ and $\sim_\Lambda$ coincide (we reduce to this situation by conjugating $\Lambda$ if necessary). We denote this equivalence relation on $\Clopen(X)$ by $\sim$. We also fix closed subsets $K,L$ such that $K$ is $\Gamma$-sparse and $L$ is $\Lambda$-sparse, and a homeomorphism $h \colon K \to L$.
\end{nota*}

\begin{defn}
Let $\Delta$ be a finite subgroup of $\Gamma$, $\Sigma$ a finite subgroup of $\Lambda$, and assume that $(\mcA,\Delta)$, $(\mcB,\Sigma)$ are finite unit systems.

A Boolean algebra isomorphism $\Phi \colon \mcA \to \mcB$ \emph{respects $\sim$} if for any $A \in \mcA$ one has $\Phi(A) \sim A$. We say that $\Phi$ \emph{conjugates} $(\mcA,\Delta)$ on $(\mcB,\Sigma)$ if $\Sigma_{|\mcB}= \Phi \Delta_{|\mcA} \Phi^{-1}$.
\end{defn}

\begin{defn}
Let $\Delta$ be a finite subgroup of $\Gamma$, $\Sigma$ a finite subgroup of $\Lambda$, and assume that $(\mcA,\Delta)$ is a $K$-compatible finite unit system, and $(\mcB,\Sigma)$ is a $L$-compatible finite unit system.

We say that a Boolean algebra isomorphism $\Phi \colon \mcA \to \mcB$ is $h$-compatible if:
\begin{enumerate}
    \item $\Phi$ respects $\sim$.
    \item $\Phi$ conjugates $(\mcA,\Delta)$ on $(\mcB,\Sigma)$.
    \item For every atom $\alpha \in \mcA$, $\Phi(\alpha) \cap L= h(\alpha \cap K)$.
\end{enumerate}

\end{defn}
% Note that, for all but one atom in each $\Delta$-orbit, one has $\alpha \cap K = \emptyset$, so the last condition above is really only about the behaviour of $\Phi$ on the unique atom in each orbit which intersects $K$.

\begin{lemma}\label{l:Krieger_construction}
Assume that $(\mcA,\Delta)$, $(\mcB,\Sigma)$ are respectively $K$- and $L$-compatible finite unit systems with $\Delta \le \Gamma$, $\Sigma \le \Lambda$, and $\Phi \colon \mcA \to \mcB$ is a $h$-compatible Boolean algebra isomorphism. 

Let $(\mcA',\Delta')$ be a $K$-compatible finite unit system refining $(\mcA,\Delta)$ with $\Delta' \le \Gamma$. 

Then one can find a $L$-compatible finite unit system $(\mcB',\Sigma')$ refining $(\mcB,\Sigma)$, with $\Sigma' \le \Lambda$ and a $h$-compatible isomorphism $\Phi' \colon \mcA' \to \mcB'$ which extends $\Phi$.
\end{lemma}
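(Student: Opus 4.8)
The strategy is a back-and-forth extension step, mirroring the classical Krieger argument but now keeping track of the extra data: the sparse sets $K,L$ and the homeomorphism $h$. Passing from $(\mcA,\Delta)$ to the refinement $(\mcA',\Delta')$ amounts, column by column, to (i) possibly cutting each atom of $\mcA$ into finitely many pieces, and (ii) possibly enlarging the finite group acting on each $\Delta$-orbit of atoms. I will handle one $\Delta'$-orbit of atoms of $\mcA'$ at a time. Fix a $\Delta$-orbit $O$ of atoms of $\mcA$, pick a representative $\alpha \in O$, and let $\alpha = \bigsqcup_{s} \alpha_s$ be the decomposition into the atoms of $\mcA'$ contained in $\alpha$. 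Since $(\mcA',\Delta')$ is $K$-compatible, for each $s$ the $\Delta'$-orbit of $\alpha_s$ meets $K$ in at most one point; and since $\Phi$ was $h$-compatible, $\Phi(\alpha)\cap L = h(\alpha\cap K)$, a clopen subset of $L$.

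First I would choose, for each piece $\alpha_s$, a clopen target inside $\Phi(\alpha)$. Concretely: $h(\alpha_s \cap K)$ is a clopen subset of $L$ contained in $\Phi(\alpha)\cap L$, and I want a clopen $\beta_s \subseteq \Phi(\alpha)$ with $\beta_s \cap L = h(\alpha_s\cap K)$, with $\beta_s \sim \alpha_s$, and with the $\beta_s$ pairwise disjoint and exhausting $\Phi(\alpha)$ — i.e. a "relative" version of the partition $\alpha = \bigsqcup_s \alpha_s$ transported through $\Phi$. This is where Lemma \ref{l:clopen_intersecting_K} does the real work: applied with $\Lambda$ in place of $\Gamma$, $L$ in place of $K$, it lets me find, inside $\Phi(\alpha)$, a clopen set equivalent to a prescribed $\alpha_s$ and meeting $L$ in exactly the prescribed clopen $h(\alpha_s\cap K)$. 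I would do this greedily: peel off $\beta_s$ for the pieces one by one, at each stage working inside the clopen set $\Phi(\alpha) \setminus (\beta_{s_1}\sqcup\cdots)$ remaining, using that $\sim$ is additive on disjoint unions (so $\mu(\alpha_s) < \mu(\text{remaining})$ for all $\mu\in M(\Lambda)$ by Lemma \ref{l:ample_Glasner_Weiss} and Lemma \ref{l:props_invariant_measures}, after noting $\sum_t \mu(\alpha_t) = \mu(\alpha) = \mu(\Phi(\alpha))$ and handling the last piece separately by an exact-equality argument via part \eqref{proof:GW2} of Lemma \ref{l:ample_Glasner_Weiss}). Once the representative atom $\alpha$ is dealt with, I transport the decomposition to the rest of the $\Delta$-orbit $O$ using the elements of $\Sigma$ that realize the $\Sigma$-action conjugate to $\Delta$ under $\Phi$; $L$-compatibility of the result follows because $h$ is a bijection $K\to L$ and the pieces $\alpha_s$ were chosen $K$-compatibly.

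Next I would extend the group action. The new group $\Delta'$, restricted to one of its orbits of atoms of $\mcA'$, is a transitive finite permutation group; I need to produce a finite subgroup $\Sigma'\le\Lambda$ acting on the transported atoms $\{\Phi'(\alpha_s) : \ldots\}$ so that $\Phi'$ conjugates $\Delta'_{|\mcA'}$ onto $\Sigma'_{|\mcB'}$. For each generator $\delta\in\Delta'$ sending atom $\gamma_1$ to atom $\gamma_2$ (both atoms of $\mcA'$), I want $\sigma\in\Lambda$ with $\sigma(\Phi'(\gamma_1)) = \Phi'(\gamma_2)$, chosen compatibly with $h$, i.e. $\sigma$ must carry $\Phi'(\gamma_1)\cap L = h(\gamma_1\cap K)$ onto $\Phi'(\gamma_2)\cap L = h(\gamma_2\cap K)$; since $\delta(\gamma_1\cap K) = \gamma_2\cap K$ and $h$ is a homeomorphism, $h\delta h^{-1}$ is already a homeomorphism between these two clopen subsets of $L$, so the constraint on $\sigma$ is consistent. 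Because $\Phi'(\gamma_1)\sim\gamma_1\sim\gamma_2\sim\Phi'(\gamma_2)$, there is some $\lambda\in\Lambda$ with $\lambda(\Phi'(\gamma_1))=\Phi'(\gamma_2)$; I then need to correct it on the $L$-part. Here I would invoke a sparse version of the Glasner–Weiss argument once more — or cite Lemma \ref{l:clopen_intersecting_K} — to adjust $\lambda$ so that it additionally matches $h$ on $L$, cutting $\Phi'(\gamma_1)$ into the $L$-part and its complement and handling each by $\sim_\Lambda$. Having built $\sigma$ for each generator, I take $\Sigma'$ to be the group they generate together with $\Sigma$; it is finite, it is a subgroup of $\Lambda$ (being generated by finitely many elements of $\Lambda$, which is locally finite), and $(\mcB',\Sigma')$ is a unit system by the usual argument (an element fixing every atom of $\mcB'$ and agreeing atomwise with $\Sigma'$ is determined), $L$-compatible by construction, refining $(\mcB,\Sigma)$, and $\Phi'$ is $h$-compatible.

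**Main obstacle.** The delicate point is the simultaneous bookkeeping in the two group-extension substep: I must choose the correcting elements $\sigma\in\Lambda$ for the various generators of $\Delta'$ so that the relations of the finite group $\Delta'_{|\mcA'}$ are actually satisfied by $\Sigma'_{|\mcB'}$, not merely generator-by-generator. The clean way around this is to reduce to the case where $\Delta'$ restricted to a single orbit is a regular (free, transitive) action — which one can always arrange by further refining $\mcA'$ inside the orbit, since any transitive action of a finite group $G$ on a set is a quotient of the regular action and we are free to cut atoms — or to choose a spanning tree of the orbit and define $\sigma$ along tree edges, then check the resulting group is the right one. Either way the consistency of the $h$-constraint ($h$ conjugates the $\Delta'$-action on $K$-points to a genuine homeomorphism action on $L$-points) is exactly what makes the choices cohere, and I would organize the argument so that this compatibility is used visibly at each step. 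Everything else — the measure estimates, the greedy peeling, local finiteness, the unit-system axioms — is routine given Lemmas \ref{l:ample_Glasner_Weiss}, \ref{l:props_invariant_measures} and \ref{l:clopen_intersecting_K}.
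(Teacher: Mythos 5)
Your construction of $\mcB'$ and $\Phi'$ by peeling off, inside each $\Phi(A_\rho)$, a clopen $U(C) \sim C$ with $U(C)\cap L = h(C\cap K)$ via repeated use of Lemma~\ref{l:clopen_intersecting_K}, then transporting by the $\Sigma$-conjugates to the rest of the $\Delta$-orbit, is exactly the paper's argument (the paper organizes the peeling per representative $\Delta$-atom $A_\rho$, but the content is the same, and your flag that the last piece needs the disjoint-union/fullness argument rather than another application of the lemma is a correct observation that the paper leaves implicit).

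Where the proposal goes off track is in the diagnosis of the ``main obstacle.'' There is no consistency problem with group relations to check, and the $h$-constraint on group elements that you impose is not needed. The point you are missing is the following: since $\Delta'\le\Gamma$ and $\Sigma'\le\Lambda$ are finite subgroups of \emph{full} groups and $(\mcA',\Delta')$, $(\mcB',\Sigma')$ are unit systems, the restricted permutation groups $\Delta'_{|\text{atoms}(\mcA')}$ and $\Sigma'_{|\text{atoms}(\mcB')}$ are automatically the full product of symmetric groups over their orbits (any transposition of two atoms in a common orbit is realized by an involution obtained by patching, and the unit-system axioms place it inside the finite group). Therefore, to have $\Phi'$ conjugate $\Delta'_{|\mcA'}$ onto $\Sigma'_{|\mcB'}$, one only needs the \emph{orbit partitions} to correspond under $\Phi'$, not any relation-by-relation matching of generators. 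Any spanning tree (the paper uses a star rooted at a chosen $\tau(\pi)$) that joins the relevant $\Sigma$-orbits does the job; this is the content of the paper's one-line remark that ``we have added at most one link between any two $\Sigma$-orbits.'' Consequently, your proposed reduction to a regular action, and the worry about ``relations of the finite group,'' are unnecessary detours, and the suggestion to correct $\sigma\in\Lambda$ ``so that it additionally matches $h$ on $L$'' is a category error: $h$-compatibility is a constraint on the Boolean isomorphism $\Phi'$, not on individual elements of $\Sigma'$. Once the atoms of $\mcB'$ satisfy $\Phi'(C)\cap L = h(C\cap K)$, $L$-compatibility of $(\mcB',\Sigma')$ follows from $K$-compatibility of $(\mcA',\Delta')$ and injectivity of $h$, with no condition on the joining involutions beyond sending atom to atom.
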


\begin{proof}
For every orbit $\rho$ of the action of $\Delta$ on the atoms of $\mcA$, we choose a representative $A_\rho$. If $\rho$ intersects $K$, we choose $A_\rho$ so that $A_\rho \cap K \ne \emptyset$ (and it is the unique such atom in $\rho$, because $\mcA$ is $K$-compatible).

For every $A \in \rho$, we denote by $\delta(\rho,A)$ the element of $\Delta$ which maps $A$ to $A_\rho$, $A_\rho$ to $A$, and is the identity everywhere else. This is an involution (and it is uniquely defined by definition of a unit system); in the particular case where $A=A_\rho$ we have $\delta(\rho,A_\rho)=id$. Similarly, we denote $\sigma(\rho,A)$ the involution of $\Sigma$ exchanging $\Phi(A)$ and $\Phi(A_\rho)$ and which is the identity everywhere else.

For every $\rho$ we have 
\[A_\rho= \bigsqcup_{C \in \text{ atoms}(\mcA') \colon C \subseteq A_\rho} C\]

Let $C_1,\ldots,C_p$ denote the atoms of $\mcA'$ contained in $A_\rho$. Applying Lemma \ref{l:clopen_intersecting_K}, find a clopen $U(C_1) \sim_\Gamma C_1$ contained in $\Phi(A_\rho)$ and such that $U(C_1) \cap L = h(C_1 \cap K)$; then a clopen $U(C_2) \sim_\Gamma C_2$ contained in $\Phi(A_\rho)$ disjoint from $C_1$ and such that $U(C_2) \cap L = h(C_2 \cap K)$; and so on.

We now have
\[\Phi(A_\rho) = \bigsqcup_{C \in \text{ atoms}(\mcA') \colon C \subseteq A_\rho} U(C)\]
where $U(C) \sim C$, and $U(C) \cap L = h(C \cap K)$ for all $C$. 

\begin{figure}[ht]
    \centering
    \includegraphics[height=5cm]{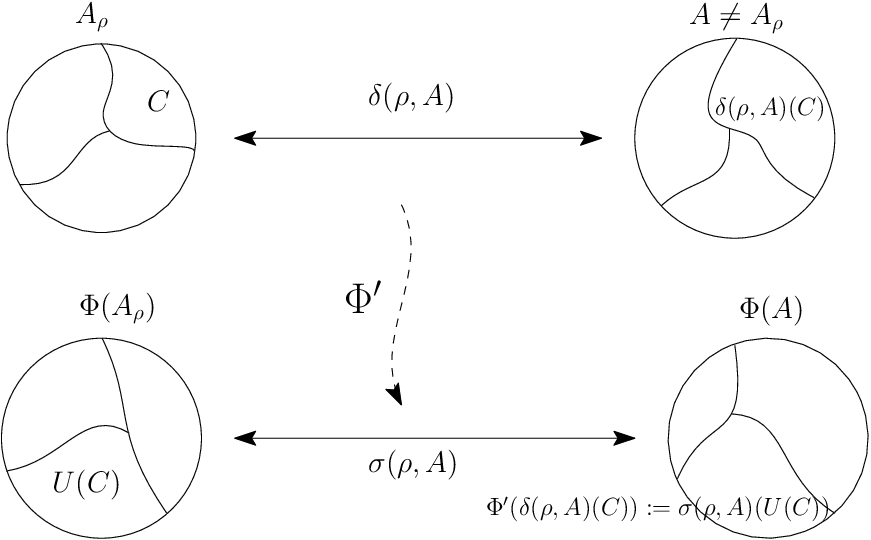}
    \caption{Construction of $\mcB'$ and $\Phi'$}
    \label{fig: krieger construction of Phi'}
\end{figure}

We define the algebra $\mcB'$ by setting as its atoms all $U(C)$, for $C$ an atom of $\mcA'$ contained in some $A_\rho$, as well as all $\sigma(\rho,A)(U(C))$ for $A \in \rho$ and $C$ contained in $A_\rho$ (see figure \ref{fig: krieger construction of Phi'}). We obtain an isomorphism $\Phi' \colon \mcA' \to \mcB'$ by setting $\Phi(C)=U(C)$ for every atom of $\mcA'$ contained in some $A_\rho$; and then
for any atom $C$ of $\mcA'$ contained in some $A \in \mcA$ whose $\Delta$-orbit is $\rho$, 
\[\Phi'(C)= \sigma(\rho,A) (U(\delta(\rho,A)(C))) \]

\begin{figure}[ht]
    \centering
    \includegraphics[width=.9\linewidth]{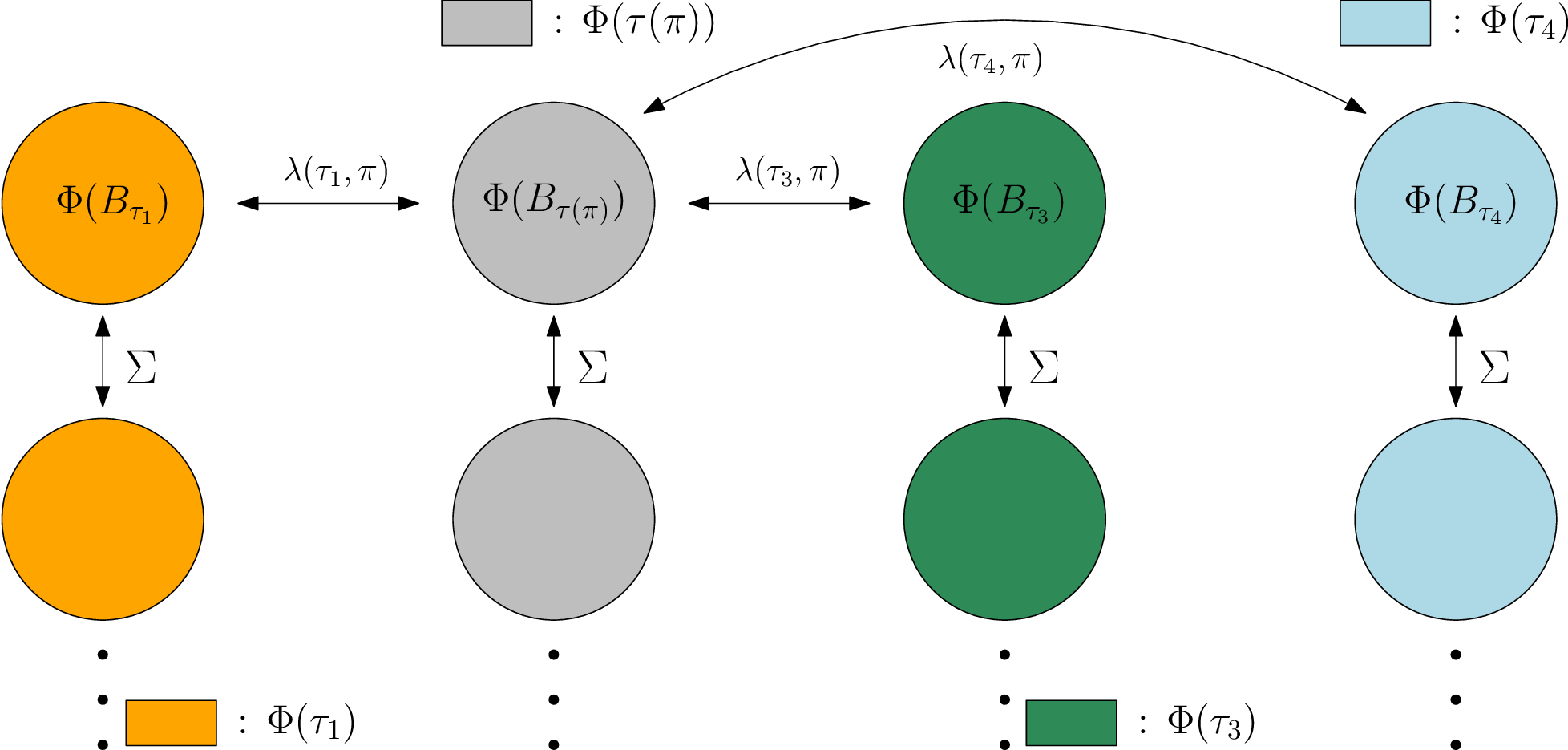}
    \caption{Construction of $\Sigma'$ imitating the behavior of $\Delta'$ on the image of a $\Delta'$-orbit $\pi$ containing four $\Delta$-orbits}
    \label{fig: Krieger construction Sigma'}
\end{figure}

We now need to construct the group $\Sigma'$. In the remainder of the proof, the letter $\tau$ always stands for an orbit of the action of $\Delta$ on the atoms of $\mcA'$, and the letter $\pi$ for an orbit of the action of $\Delta'$ on the atoms of $\mcA'$. For any $\tau$ there exists a unique $\pi$ which contains $\tau$. 

For any $\tau$ we choose a representative $B_\tau$, and among all $B_\tau$ contained in a given $\pi$ we choose one $B_{\tau(\pi)}$. For every $\tau$ contained in $\pi$, we choose an involution $\lambda(\tau,\pi)\in \Lambda$ mapping $\Phi'(B_{\tau(\pi)})$ to $\Phi'(B_\tau)$, and equal to the identity elsewhere. Let $\Sigma'$ be the group generated by $\Sigma$ and $\{\lambda(\tau,\pi) \colon \tau \subset \pi\}$. Then $(\mcB',\Sigma')$ is a finite unit system (because we have added at most one link between any two $\Sigma$-orbits) and $\Phi'$ conjugates $(\mcA',\Delta')$ to $(\mcB',\Sigma')$.

For every atom $A$ of $\mcA'$ we have $\Phi(A) \cap L= h(A \cap K)$ by choice of $U(A)$. Since $\Phi'$ conjugates $(\mcA',\Delta')$ to $(\mcB',\Sigma')$, for any two atoms $C,D$ of $\mcA'$ which intersect $K$, $\Phi'(C)$ and $\Phi'(D)$ belong to different $\Sigma'$-orbits. This proves that $(\mcB',\Sigma')$ is $L$-compatible, and completes the proof.
\end{proof}

\begin{proof}[End of the proof of Theorem \ref{t:Krieger_pointed}.]
Fix sequences $(\mcA_n,\Gamma_n)$ and $(\mcB_n,\Lambda_n)$ of respectively $K$ and $L$-compatible finite unit systems as in Lemma \ref{l:suitable_unit_systems}, with $\mcA_0=\mcB_0= \{\emptyset, X\}$ and $\Gamma_0=\Lambda_0=\{\mathrm{id}\}$.

Then $(\mcA_0,\Gamma_0)$, $(\mcB_0,\Lambda_0)$ are respectively $K$ and $L$-compatible finite unit systems, and $\Phi_0=id$ is $h$-compatible.

Applying Lemma \ref{l:Krieger_construction} we build inductively refining sequences $(\mcA_n',\Gamma_n')$, (resp. $(\mcB_n',\Lambda_n')$)  of $K$-compatible (resp.~$L$-compatible) finite unit systems contained in $(\Clopen(X),\Gamma)$ and $(\Clopen(X),\Lambda)$ respectively, as well as $h$-compatible isomorphisms $\Phi_n$ conjugating $(\mcA_n',\Gamma_n')$ to $(\mcB_n',\Lambda_n')$  such that for odd $n$ $(\mcA_n',\Gamma_n')$ refines $(\mcA_n,\Gamma_n)$, and for even $n$ $(\mcB_n',\Lambda_n')$ refines $(\mcB_n,\Lambda_n)$  

% (see figure \ref{fig: suite_iso}).

% \begin{figure}[h]
%     \centering
%     \includegraphics[width=.9\linewidth]{suite_isom_krieger_version_OE.eps}
%     \caption{Construction of $\Phi$ by back-and-forth}
%     \label{fig: suite_iso}
% \end{figure}

To see why this is possible, assume that we have carried out this construction up to some even $n$ (the odd case is symmetric). We pick $k \ge n+1$ such that $(\mcA_k,\Gamma_k)$ refines $(\mcA_n',\Gamma_n')$. The unit system $(\mcA_k,\Gamma_k)$ (resp. $(\mcB_n',\Lambda_n')$) is $K$-compatible (resp. $L$-compatible), so applying Lemma \ref{l:Krieger_construction} gives us some $L$-compatible finite unit system $(\mcB_{n+1}',\Lambda_{n+1}')$ contained in $(\Clopen(X),\Lambda)$ which refines $(\mcB_n',\Lambda_n)$, and a $h$-compatible isomorphism $\Phi_{n+1} \colon \mcA_k \to \mcB_{n+1}'$. Setting $\mcA_{n+1}'=\mcA_k$, $\Gamma_{n+1}'=\Gamma_k$, we are done.

This construction produces an isomorphism $\Phi$ of $\Clopen(X)$ (the union of the sequence $\Phi_n$) such that $\Phi \Gamma \Phi^{-1}= \Lambda$ (here, $\Gamma$ and $\Lambda$ are viewed as subgroups of the automorphism group of the Boolean algebra $\Clopen(X)$). By Stone duality, there exists a unique $g \in \Homeo(X)$ such that $g(U)=\Phi(U)$ for any $U \in \Clopen(X)$, and we have $g \Gamma g^{-1}= \Lambda$.

Clopen subsets of the form $\alpha \cap K$, where $\alpha$ is an atom of some $\mcA'_n$, generate the topology of $K$. For each such clopen we have $\Phi(\alpha) \cap L= h (\alpha \cap K)$, in other words $g(\alpha) \cap L = h(\alpha \cap K)$. It follows that $g_{|K}=h$, and we are done.
\end{proof}

\begin{defn}
We say that an ample group $\Gamma$ is \emph{saturated} if $\overline{\Gamma}= \overline{[R_\Gamma]}$.
\end{defn}

The classification theorem for minimal actions of saturated ample groups follows immediately from Krieger's theorem.

\begin{theorem}\label{t:Krieger_gives_saturated_GPS}
Let $\Gamma$, $\Lambda$ be two saturated ample subgroups of $\Homeo(X)$ acting minimally. Then the following conditions are equivalent:
\begin{enumerate}
 	\item \label{Krieger_saturated_0} $\Gamma$ and $\Lambda$ are conjugated in $\Homeo(X)$.
    \item \label{Krieger_saturated_1} $\Gamma$ and $\Lambda$ are orbit equivalent.
    \item \label{Krieger_saturated_2} There exists $g \in \Homeo(X)$ such that $g_*M(\Gamma)=M(\Lambda)$.
\end{enumerate}
\end{theorem}

\begin{proof}
Clearly the first condition implies the second, and we already know that the second implies the third. Now, assume that $\Gamma$, $\Lambda$ are saturated ample groups acting minimally, and $M(\Gamma)=M(\Lambda)$ (as usual, we reduce to this situation by conjugating $\Lambda$ if necessary).

Then we have $\overline{[R_\Gamma]}= \overline{[R_\Lambda]}$ (see Lemma \ref{l:closure_full_group}), hence also $\overline{\Gamma}=\overline{\Lambda}$. So $\Gamma$, $\Lambda$ are conjugated by Theorem \ref{t:Krieger_pointed}.
\end{proof}

\section{Balanced partitions}\label{s:balanced}

In this section, we fix an ample group $\Gamma$ acting minimally, and an exhaustive sequence $(\mcA_n,\Gamma_n)$ of finite unit systems. 

\begin{defn} We consider two equivalence relations on $\Clopen(X)$, defined as follows:
\begin{itemize}
\item $U \sim_\Gamma  V$ iff there exists $g \in \Gamma$ such that $g(U)=V$.
\item $U \sim_\Gamma^* V$ iff $\mu(U)=\mu(V)$ for any $\mu \in M(\Gamma)$ (equivalently, there exists $g \in [R_\Gamma]$ such that $g(U)=V$).
\end{itemize}

\end{defn}

The main difficulty in our proof of the classification theorem comes from the fact that $\sim_\Gamma^*$  may be strictly coarser than $\sim_\Gamma$.

\begin{defn}
An equivalence relation $\simeq$ on $\Clopen(X)$ is \emph{full} if for any clopens $A, B$, whenever $A = \bigsqcup A_i$, $B=\bigsqcup B_i$ and $A_i \simeq B_i$ for all $i$, we have $A \simeq B$.
\end{defn}

The relation $\sim_\Gamma$ is full: indeed, consider $(A_i)$, $(B_i)$ as above; applying Lemma \ref{l:Krieger_unit_systems} we find a finite unit system $(\mcA,\Delta)$ with $\Delta$ a finite subgroup of $\Gamma$, such that for all $i$ $A_i$ and $B_i$ are unions of atoms of $\mcA$, and there exists $\delta_i \in \Delta$ such that $\delta_i A_i = B_i$. We then see that there exists $\delta \in \Delta$ such that $\delta (\bigsqcup A_i)= \bigsqcup B_i$. 

It is immediate that $\sim_\Gamma^*$ is full. We note the following question: is $\sim_\Lambda$ full whenever $\Lambda$ is the full group associated to an action of a countable group ?

\begin{defn}
Let $\simeq$ be a full equivalence relation on $\Clopen(X)$. 

A \emph{$\simeq$-partition} of $X$ is a clopen partition $\mcA=(A_{i,j})_{(i,j) \in I}$ such that 
$$\forall i,j,k \quad \left( (i,j) \in I \text{ and } (i,k) \in I \right) \Rightarrow A_{i,j} \simeq A_{i,k}$$

We denote $I_i$ the set $\{j \colon (i,j) \in I\}$. For any element $\alpha=A_{i,j}$ of $\mcA$, the set $\{A_{i,k} \colon k \in I_i\}$ is called the \emph{$\mcA$-orbit} of $\alpha$ and denoted $O(\alpha)$.

Given an orbit $O=(A_{i,j})_{j \in I_i}$, a family of clopens $(B_{i,j})_{j\in I_i}$ such that 
$$\forall j\in I_i \quad B_{i,j}\subset A_{i,j} \text{ and } \forall j,k\in I_i \quad B_{i,j}\simeq B_{i,k}$$ 
is called a \emph{fragment} of $O$.
\end{defn}

Fragments of orbits are the abstract counterpart to the copies of columns that appear during the cutting procedure for Kakutani--Rokhlin partitions. For $A\subseteq \alpha$, with $\alpha$ an atom of $\mcA$, we may abuse notation and talk about $O(A)$ to designate a fragment of $O(\alpha)$ containing $A$ (such a fragment is obviously not unique, but we usually do not care about the particular choices we are making when selecting the atoms of our fragment). 

\begin{defn} Let $\simeq$ be a full equivalence relation on $\Clopen(X)$.

A $\simeq $-partition $\mcA$ is \emph{compatible} with a given clopen set $U$ if $U$ is a union of elements of $\mcA$ (equivalently, if $U$ belongs to the Boolean algebra generated by $\mcA$).

If $\mcA$ is compatible with $U$, and $O$ is a $\mcA$-orbit, we let $n_O(U)$ be the number of elements of $O$ which are contained in $U$.
\end{defn}

\begin{defn}
Let $\simeq$ be a full equivalence relation on $\Clopen(X)$, and $\mcA$, $\mcB$ be two $\simeq$-partitions.  

We say that $\mcB$ \emph{refines} $\mcA$ if:
\begin{itemize}
\item[•] $\mcB$ refines $\mcA$ as a clopen partition;   
\item[•] Whenever $\alpha$, $\beta$ belong to the same $\mcA$-orbit, we have $n_O(\alpha)=n_O(\beta)$ for any $\mcB$-orbit $O$. By analogy with Kakutani--Rokhlin partitions, we say that there are $n_O(\alpha)$ copies of the $\mcA$-orbit of $\alpha$ contained in $O$. 
\end{itemize}
\end{defn}

\begin{lemma}\label{l:refinement}
Any two $\sim_\Gamma $-partitions $\mcA$, $\mcB$ admit a common refinement. 
\end{lemma}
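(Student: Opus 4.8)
The plan is to build the common refinement directly from the fixed exhaustive sequence $(\mcA_n,\Gamma_n)$ of finite unit systems. Since a clopen partition of the compact space $X$ is finite, $\mcA$ and $\mcB$ have only finitely many atoms; in particular there are only finitely many ordered pairs $(\alpha,\alpha')$ of atoms of $\mcA$ lying in a common $\mcA$-orbit, and for each such pair I fix $g_{\alpha,\alpha'}\in\Gamma$ with $g_{\alpha,\alpha'}(\alpha)=\alpha'$; likewise for the atoms of $\mcB$. Because $\Gamma=\bigcup_n\Gamma_n$ with $(\Gamma_n)$ increasing and $\Clopen(X)=\bigcup_n\mcA_n$, I may choose a single $n$ such that every atom of $\mcA$ and of $\mcB$ belongs to the algebra $\mcA_n$ and all the finitely many chosen witnesses (for $\mcA$ and for $\mcB$) belong to $\Gamma_n$.

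I then claim that the partition of $X$ into atoms of $\mcA_n$, equipped with the orbit structure in which two atoms are declared equivalent exactly when they lie in the same $\Gamma_n$-orbit, is a common refinement of $\mcA$ and $\mcB$. It is a $\sim_\Gamma$-partition because $\Gamma_n\le\Gamma$, so atoms in a common $\Gamma_n$-orbit are $\sim_\Gamma$-related; and it refines $\mcA$ and $\mcB$ as clopen partitions by the choice of $n$. The remaining point is the \enquote{copies} condition: given atoms $\alpha,\alpha'$ of $\mcA$ in a common $\mcA$-orbit and a $\Gamma_n$-orbit $O$ of atoms of $\mcA_n$, one needs $n_O(\alpha)=n_O(\alpha')$. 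Here is where the choice of $n$ is used: the witness $g:=g_{\alpha,\alpha'}$ lies in $\Gamma_n$, hence induces an automorphism of the algebra $\mcA_n$ (a unit-system axiom), and, since $g\Gamma_n=\Gamma_n$, the element $g$ maps each $\Gamma_n$-orbit of atoms of $\mcA_n$ onto itself. Therefore $D\mapsto g(D)$ restricts to a bijection from $\{D\in O\colon D\subseteq\alpha\}$ onto $\{D\in O\colon D\subseteq\alpha'\}$ — injectivity is clear, $g(\alpha)=\alpha'$ keeps the images inside $\alpha'$, and surjectivity follows by applying $g^{-1}$ — so $n_O(\alpha)=n_O(\alpha')$. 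Running the same argument with the witnesses fixed for $\mcB$ shows that the partition also refines $\mcB$, which completes the proof.

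The only real subtlety, and what I expect to be the main obstacle in a naive attempt, is that the witnesses realizing $\alpha\sim_\Gamma\alpha'$ a priori lie in $\Gamma$ rather than at the finite level $\Gamma_n$ governing the refining unit system; one must push $n$ far enough that the refining partition and all the relevant group elements live at the same finite stage, after which $g$ automatically preserves both the algebra $\mcA_n$ and its decomposition into $\Gamma_n$-orbits, and the required counting is immediate.
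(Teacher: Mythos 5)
Your proof is correct and follows essentially the same route as the paper: pick $n$ large enough that every atom of $\mcA$ and $\mcB$ lies in the algebra $\mcA_n$ and the finitely many witnessing group elements lie in $\Gamma_n$, then observe that those witnesses, being in $\Gamma_n$, permute the $\Gamma_n$-orbits of atoms of $\mcA_n$ and hence give the required equality of counts. The paper's proof is terser but makes exactly this argument; your write-up simply fills in the bijection $D\mapsto g(D)$ that the paper leaves implicit.
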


\begin{proof}
 For $n$ big enough, $\mcA_n$ is compatible with any atom of $\mcA$, and for any $\alpha$, $\beta$ belonging to the same $\mcA$-orbit, there exists $\gamma \in \Gamma_n$ such that $\gamma(\alpha)=\beta$. Hence in any $\mcA_n$-orbit $O$ we have $n_O(\alpha)=n_O(\beta)$ for large $n$. So any $\mcA_n$ refines $\mcA$ as long as $n$ is large enough, and it follows that $\mcA$, $\mcB$ have a common refinement.
\end{proof}

% Remark for myself, so as not to forget the question: the definition of $\sim_\Gamma$ would make sense for any group; I have no clue if the above lemma is true in greater generality ?

This implies in particular that, for any $\sim_\Gamma $-partition $\mcA$ and any clopen $U$, there exists a refinement of $\mcA$ which is compatible with $U$.

We seize the opportunity to note the following fact, closely related to \cite{Giordano1995}*{Lemma~6.1} and \cite{Giordano2004}*{Theorem~4.8}. This result will not be needed in our proof of the classification theorems (though we use it at the end of the paper to prove Theorem \ref{t:absorption2}).

\begin{theorem}\label{t:ample_vs_integers}
There exists a minimal homeomorphism $\varphi$ and $x \in X$ such that $\Gamma= \Gamma_x(\varphi)$.
\end{theorem}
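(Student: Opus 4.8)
\textbf{Proof plan for Theorem \ref{t:ample_vs_integers}.} The goal is to realize a minimal ample group $\Gamma$ as $\Gamma_x(\varphi)$ for some minimal homeomorphism $\varphi$ and some $x$. The natural strategy is to build $\varphi$ as a ``limit'' along an exhaustive sequence $(\mcA_n,\Gamma_n)$ of finite unit systems, by arranging at each stage a homeomorphism that cyclically permutes the atoms of each column of a Kakutani--Rokhlin-type picture associated to $\mcA_n$, in such a way that the associated $\Gamma_n$ sits inside the group defined by $\Gamma_x(\varphi)$ at level $n$. First I would observe that a finite unit system $(\mcA_n,\Gamma_n)$ with $\Gamma_n$ acting on the atoms of $\mcA_n$ decomposes $X$ into $\Gamma_n$-orbits of atoms; choosing, in each orbit, an enumeration that makes it look like a ``tower'' and letting $\varphi_n$ send the $j$-th atom to the $(j{+}1)$-st atom within each orbit (and the top atom of each orbit to the base of its orbit, or better, to the base of the \emph{next} orbit so as to get a single long tower and eventual minimality) produces a homeomorphism whose associated finite group — the analogue of $\Gamma_n$ from Section~\ref{s:Kakutani} — is exactly (a conjugate of) $\Gamma_n$, since every element of a unit system is determined by how it permutes atoms within orbits.

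The key point is to do this \emph{coherently} as $n$ grows. When $(\mcA_{n+1},\Gamma_{n+1})$ refines $(\mcA_n,\Gamma_n)$, the atoms of $\mcA_{n+1}$ subdivide those of $\mcA_n$, and I must choose the enumeration of the $\Gamma_{n+1}$-orbits of atoms of $\mcA_{n+1}$ so that the ``tower'' structure of $\varphi_{n+1}$ extends that of $\varphi_n$ in the cutting-and-stacking sense: each column of level $n$ is cut into subcolumns which are then stacked, exactly as in the discussion preceding Lemma~\ref{l: making a KR partition compatible with a clopen}. This amounts to choosing, compatibly, a linear order on the $\Gamma_{n+1}$-orbits refining the cut pieces of each $\Gamma_n$-orbit. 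Concretely I would fix once and for all a distinguished point $x$ (say, in the intersection of a decreasing sequence of atoms which will serve as the shrinking bases) and build, for each $n$, a Kakutani--Rokhlin partition $\mcK_n$ whose Boolean algebra is $\mcA_n$, whose base shrinks to $\{x\}$, and whose columns are the $\Gamma_n$-orbits of atoms ordered appropriately; Remark~\ref{KR-part have height as big as we want} and minimality of $\Gamma$ guarantee the heights grow, and the exhaustiveness of $(\mcA_n,\Gamma_n)$ guarantees condition~\eqref{KR-part cond 1}. The homeomorphism $\varphi$ is then the common refinement limit: on each atom that is not in the top of $\mcK_n$, $\varphi$ agrees with the level-$n$ shift, and these agree on overlaps by the refinement compatibility; continuity and bijectivity follow because every clopen is eventually a union of atoms, and minimality of $\varphi$ follows from minimality of $\Gamma$ together with the single-tower arrangement (the forward orbit of any point visits every atom).

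Finally I would check that $\Gamma = \Gamma_x(\varphi)$. By construction the sequence $(\mcK_n)$ is a refining sequence of Kakutani--Rokhlin partitions satisfying conditions~\eqref{KR-part cond 1} and~\eqref{KR-part cond 2} for this $x$, and the group $\Gamma_n$ associated to $\varphi$ via $\mcK_n$ — i.e. the homeomorphisms acting on each atom $U_{i,j}$ by a power of $\varphi$ keeping it inside its column — coincides with the abstract $\Gamma_n$ from the unit system, because both are precisely the homeomorphisms inducing a column-preserving permutation of the atoms of $\mcA_n$ that lies in the ambient group; since $\Gamma_x(\varphi)$ does not depend on the chosen refining sequence of KR-partitions (it only depends on $x$, as noted after the definition of $\Gamma_x(\varphi)$), and $\Gamma = \bigcup_n \Gamma_n = \bigcup_n \Gamma_n = \Gamma_x(\varphi)$, we are done. \textbf{The main obstacle} I anticipate is the bookkeeping in the inductive step: ensuring that the enumeration of $\Gamma_{n+1}$-orbits can always be chosen to extend the stacking order of the $\Gamma_n$-orbits while simultaneously keeping the base shrinking to the single point $x$ and keeping a \emph{single} tower (for minimality) rather than several — this requires being slightly careful about which atom of $\mcA_{n+1}$ inside the old base is declared the new base, and arguing that the finitely many ``orbit-joining'' choices never obstruct each other; this is exactly the kind of cutting-and-stacking coherence argument that underlies the Herman--Putnam--Skau representation of minimal Cantor systems by Bratteli diagrams, so I would expect it to go through but to require care.
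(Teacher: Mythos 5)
Your approach is genuinely different from the paper's and, if completed, would give a slightly sharper argument. The paper constructs $\varphi$ via Stone duality from a sequence of \emph{ordered} $\sim_\Gamma$-partitions, establishes only that $\Gamma$ and $\Gamma_x(\varphi)$ have the same orbits on $\Clopen(X)$, and then invokes the already-proved strengthening of Krieger's theorem (Theorem~\ref{t:Krieger_pointed} with $K=L=\emptyset$) to conclude that $\Gamma$ and $\Gamma_x(\varphi)$ are conjugate (which suffices, since any conjugate of a $\Gamma_x(\varphi)$ is itself of the form $\Gamma_y(\psi)$). You instead aim for literal equality $\Gamma=\Gamma_x(\varphi)$ by controlling $\varphi$ pointwise through the cutting-and-stacking, and thereby avoid Krieger's theorem entirely. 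This is a legitimate and more self-contained route.

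But there is a real gap in the step you flag as merely bookkeeping, and it is not where you think. You justify $\Gamma_n^{\varphi}=\Gamma_n$ by asserting that both equal ``the homeomorphisms inducing a column-preserving permutation of the atoms of $\mcA_n$ that lies in the ambient group.'' This characterization is false: an element $g\in\Gamma$ can permute the atoms of $\mcA_n$ within columns while acting on individual atoms differently from any element of the finite group $\Gamma_n$ (e.g.\ by a nontrivial permutation of a finer partition inside an atom), so the set on the right may be strictly larger than $\Gamma_n$. Moreover, prior to this point you have only specified $\varphi$ by its action on atoms, which does not determine it pointwise, so the group $\Gamma_n^\varphi$ is not even well-defined yet. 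What actually makes your construction work is the uniqueness clause in the definition of a unit system: in $(\mcA,\Delta)$, if $\delta,\delta'\in\Delta$ both send atom $\alpha$ to atom $\beta$, then $\delta|_\alpha=\delta'|_\alpha$, because the map equal to $\delta'^{-1}\delta$ on $\alpha$, to its inverse on $\alpha$'s image, and to the identity elsewhere lies in $\Delta$ and fixes every atom, hence is the identity. This shows that each ``step'' $\varphi|_\alpha$ is forced (it must equal the restriction of the unique transposition in $\Gamma_n$ moving $\alpha$ to its successor), that these forced choices are coherent when you refine from level $n$ to level $n{+}1$ (the level-$(n{+}1)$ step through $\alpha'\subset\alpha$ agrees with the level-$n$ step $\gamma_\alpha|_{\alpha'}$ because both $\gamma_\alpha$ and the level-$(n{+}1)$ transposition lie in $\Gamma_{n+1}$), and that the resulting $\Gamma_n^\varphi$ is exactly $\Gamma_n$. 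Without this argument, the equality you need is not established. Also, your worry about keeping a \emph{single} tower for minimality is unnecessary: as the paper's construction shows, several columns with top and base shrinking to a point suffice, since minimality of $\Gamma_x(\varphi)$ (hence of $\varphi$) already follows from minimality of $\Gamma$ once the orbits on $\Clopen(X)$ match.
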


\begin{proof}
We consider \emph{ordered} $\sim_\Gamma$-partitions, i.e.~$\Gamma$-partitions $\mcA$ where each orbit is totally ordered. Given two ordered $\Gamma$-partitions $\mcA$, $\mcB$, we say that $\mcB$ refines $\mcA$ if:
\begin{itemize}
    \item $\mcB$ refines $\mcA$ as a $\sim_\Gamma$-partition.
    \item For any $\mcB$-orbit $O$, each fragment $O_\mcA(\alpha)$ of $\mcA$-orbit contained in $O$ is an interval for the ordering of $O$, and the ordering on each $O_\mcA(\alpha)$ induced by $\mcB$ coincides with the ordering induced by $\mcA$.
\end{itemize}
Given an ordered $\sim_\Gamma$-partition and a $\mcA$-orbit $O$, we call $\textrm{base}(O)$ the minimal element of $O$, and $\textrm{top}(O)$ its maximal element; the base of $\mcA$ is the union of the bases of all $\mcA$-orbits, and we similarly define the top of $\mcA$.

If $\mcA$ is an ordered $\Gamma$-partition, and $\mcB$ is a $\Gamma$-partition which refines $\mcA$, then we may turn $\mcB$ into an ordered $\Gamma$-partition refining $\mcA$.

We fix a compatible metric on $X$. We build a sequence $\mcB_n$  of ordered $\sim_\Gamma$-partitions, and a sequence $\Delta_n$ of finite subgroups of $\Gamma$, such that (when we forget the ordering on $\mcB_n$) the sequence $(\mcB_n,\Delta_n)$ is an exhaustive sequence of finite unit systems and the diameters of the top and base of $\mcB_n$ both converge to $0$.
Assume for the moment that this is possible. Then there exists a unique homeomorphism $\varphi$ of $X$ such that, for every $n$ and every atom $\beta$ of $\mcB_n$ which does not belong to $\textrm{top}(\mcB_n)$, $\beta$ is mapped by $\varphi$ to its successor in $O(\beta)$; and $\varphi$ maps the top of $\mcB_n$ to the base of $\mcB_n$. Let $\{x\}= \bigcap_n \textrm{base}(\mcB_n)$. Then $\mcB_n$ is a sequence of Kakutani-Rokhlin partitions for the minimal homeomorphism $\varphi$, and $\Gamma_x(\varphi)$ has the same orbits as $\Gamma$ on $\Clopen(X)$. We deduce from Krieger's theorem that $\Gamma$ is conjugate to $\Gamma_x(\varphi)$ in $\Homeo(X)$, which gives the desired result.

It remains to explain how to build $(\mcB_n,\Delta_n)$. We let $\mcB_0$ be the trivial partition and $\Delta_0=\{\mathrm{id}\}$ to initialize the construction. Assume that $\mcB_n$ has been constructed. By cutting an orbit if necessary, we may assume that $\mcB_n$ has two orbits $O,O'$ such that $\alpha= \textrm{base}(O)$ and $\beta=\textrm{top}(O')$ have diameter less than $2^{-n}$. 

Next, we find $N$ big enough that $\Gamma_N$ contains both $\Delta_n$ and $\Gamma_n$; $\mcA_N$ refines both $\mcA_n$ and $\mcB_n$ (as unordered $\Gamma$-partitions); and each $\mcA_N$-orbit contains a fragment of $O$ as well as a fragment of $O'$ (this last property holds for any large enough $N$ by minimality of $\Gamma$). Then we may turn $\mcA_N$ into an ordered $\Gamma$-partition refining $\mcB_n$ and such that each $\mcB_n$-orbit has its base contained in $\alpha$ and its top contained in $\beta$. We let $\mcB_{n+1}=\mcA_N$ (endowed with such an ordering), $\Delta_{n+1}=\Gamma_N$. This concludes the proof.
\end{proof}

We turn to a construction that is more specifically needed in our proof of the classification theorem for minimal ample groups.

\begin{defn}
Let $U,V$ be two clopen sets, and $\mcA$  be a $\sim_\Gamma $-partition compatible with $U$ and $V$. 

\begin{itemize}
\item[•] We say that a $\mcA$-orbit $O$ is \emph{$(U,V)$-balanced} if $n_O(U)=n_O(V)$.
\item[•] We say that a pair of $\mcA$ orbits $(O(\alpha),O(\beta))$ is $(U,V)$-balanced if \[n_{O(\alpha)}(U)-n_{O(\alpha)}(V)=n_{O(\beta)}(V)-n_{O(\beta)}(U) \]
and $\mu(\alpha)=\mu(\beta)$ for all $\mu \in M(\Gamma)$.
\item[•] We say that $U,V$ are \emph{$\mcA$-equivalent} if $n_O(U)=n_O(V)$ for any $\mcA$-orbit $O$.
\end{itemize}
\end{defn}

Note that two clopens $U,V$ are $\mcA$-equivalent for some $\sim_\Gamma $-partition $\mcA$ if and only if $U \sim_\Gamma  V$.
If $U \sim_\Gamma^* V$ but $U \not \sim_\Gamma  V$, there cannot exist a $\sim_\Gamma $ partition $\mcA$ such that $U,V$ are $\mcA$-equivalent; it is however convenient to manipulate $\sim_\Gamma $-partitions which are as close as possible to making $U,V$ $\mcA$-equivalent. This is what leads us to consider balanced pairs of orbits, and motivates the next definition and lemma.

\begin{defn}
Let $\overline{ U}=(U_1,\ldots,U_n)$, $\overline{ V}=(V_1,\ldots,V_n)$ be tuples of clopen sets. Let $\mcA$ be a $\sim_\Gamma $-partition. We say that $\overline{ U }$ and $\overline{ V}$ are \emph{almost $\mcA$-equivalent} if $\mcA$ is compatible with each $U_i, 
V_i$ and there exists $k \le n$ and $\mcA$-orbits $C_1,\ldots, C_k$, $D_1,\ldots,D_k$, such that:
\begin{itemize}
\item[•] Any $\mcA$-orbit which does not belong to $\{C_1,\ldots,C_k,D_1,\ldots,D_k\}$ is $(U_j,V_j)$-balanced for all $j$.
\item[•] For any $i \in \{1,\ldots,k\}$ and any $j \in \{1,\ldots,n\}$, $(C_i,D_i)$ is a $(U_j,V_j)$-balanced pair of orbits.
\end{itemize}
We call $C_1,\ldots,C_k$, $D_1,\ldots,D_k$ the \emph{exceptional} $\mcA$-orbits.
\end{defn}

In the above definition, we allow $k=0$, in which case $\overline{ U}, \overline{ V}$ are $\mcA$-equivalent. 

\begin{prop}\label{l:almost_equivalent_partitions}
Let $\mcA$ be a $\sim_\Gamma $-partition, and $\overline{ U}=(U_1,\ldots,U_n)$, $\overline{ V}=(V_1,\ldots,V_n)$ be two tuples of clopen sets such that $U_i \sim_\Gamma ^* V_i$ for all $i \in \{1,\ldots,n\}$.

Then there exists a $\sim_\Gamma $-partition $\mcB$ which refines $\mcA$ and is such that $(\overline{ U}, \overline{ V})$ are almost $\mcB$-equivalent.

Additionally, one can ensure that the following condition holds: denote 
$$N_i(\mcB)= \max \{|n_O(U_i) - n_O(V_i)| \colon O \text{ is a } \mcB\text{-orbit}\}$$
Let $N(\mcB)= \sum_{i=1}^n N_i(\mcB)$ and denote by $h$ be the number of atoms of $\mcA$. Then any exceptional $\mcB$-orbit contains more than $3h N(\mcB))$ copies of every $\mcA$-orbit.
\end{prop}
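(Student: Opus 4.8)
The plan is to first produce, by refining $\mcA$, a $\sim_\Gamma$-partition in which for each $i$ the ``imbalance'' $n_O(U_i)-n_O(V_i)$ is controlled, and then to pair up positive-imbalance orbits with negative-imbalance orbits and merge the resulting pairs into the exceptional orbits. The starting observation is that since $U_i \sim_\Gamma^* V_i$, we have $\mu(U_i)=\mu(V_i)$ for all $\mu\in M(\Gamma)$; hence for any $\sim_\Gamma$-partition $\mcA'$ refining $\mcA$ and any $\mcA'$-orbit $O$, summing $n_O(U_i)\mu(\alpha)$ over atoms $\alpha$ of $O$ and using that all atoms of $O$ have equal measure shows $\sum_O (n_O(U_i)-n_O(V_i))\,\mu(\alpha_O)=0$, i.e. the signed imbalances of $U_i$ versus $V_i$, weighted by orbit-measures, cancel. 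This is the bookkeeping identity that makes the pairing possible.

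First I would pass to a refinement $\mcA'$ of $\mcA$ (using Lemma~\ref{l:refinement}, realized inside some $\mcA_n$ via the exhaustive sequence) compatible with every $U_i$ and $V_i$; at this stage $N_i(\mcA')$ and $N(\mcA')$ are well-defined finite quantities. Then, orbit by orbit, I would further cut: whenever an orbit $O$ has $n_O(U_i)>n_O(V_i)$ for some $i$, I want to split $O$ into sub-orbits (copies of $O$, in the terminology introduced before the Proposition) so that at most one copy carries all the imbalance while the others become $(U_i,V_i)$-balanced — this is possible because the atoms of $O$ lying in $U_i$ but not $V_i$ (and vice versa) can be distributed among the copies, keeping one distinguished copy with the net imbalance and making the rest balanced, for all $i$ simultaneously by doing this coordinate after coordinate on the already-cut pieces. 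After this, only finitely many orbits — call them the \emph{imbalanced} orbits — are not $(U_j,V_j)$-balanced for all $j$, and moreover we are free to have cut each imbalanced orbit into as many extra balanced copies as we wish, in particular more than $3hN(\mcB)$ copies of the corresponding $\mcA$-orbit; this is where the ``additionally'' clause will be arranged.

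Next comes the pairing. Group the imbalanced orbits by their ``type'', i.e. the vector $\bigl(n_O(U_j)-n_O(V_j)\bigr)_{j=1}^n\in\Z^n$. The weighted-cancellation identity above, together with Lemma~\ref{l:props_invariant_measures} (all invariant measures have full support, and on nonempty clopens their infimum is positive), forces that one cannot have, say, all imbalanced orbits with $n_O(U_1)-n_O(V_1)\ge 0$ and at least one strictly positive; more carefully, by further subdividing imbalanced orbits into copies I can assume every imbalanced orbit has imbalance vector with entries in $\{-1,0,+1\}$ coordinatewise in a single coordinate at a time (i.e. normalize so each imbalanced orbit is ``unit imbalanced'' in exactly one coordinate $j$, with sign $\pm$), and then the cancellation identity says that for each $j$ the $\mu$-weighted count of $+$-orbits equals that of $-$-orbits for all $\mu$ — which, by Lemma~\ref{l:ample_Glasner_Weiss}(2) applied to unions of the relevant base atoms, lets me match each $+$-orbit $C_i$ in coordinate $j$ with a $-$-orbit $D_i$ in the same coordinate having $\mu(\base(C_i))=\mu(\base(D_i))$ for all $\mu$; declare the pair $(C_i,D_i)$ exceptional. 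By construction $n_{C_i}(U_j)-n_{C_i}(V_j) = n_{D_i}(V_j)-n_{D_i}(U_j)$ for the special coordinate, and both sides vanish for the others, so $(C_i,D_i)$ is a $(U_j,V_j)$-balanced pair for every $j$; every non-exceptional orbit is $(U_j,V_j)$-balanced for all $j$; and the number of pairs $k$ is at most the total imbalance $N(\mcB)$, in particular $k\le n$ can be forced because after normalization distinct coordinates contribute disjointly — or, if $k\le n$ is genuinely needed, one absorbs same-coordinate pairs together into a single exceptional pair per coordinate.

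The main obstacle I expect is the last matching step: ensuring that the chosen partner $D_i$ for $C_i$ not only has the right imbalance sign but also satisfies $\mu(\base(C_i))=\mu(\base(D_i))$ for \emph{all} $\mu\in M(\Gamma)$ simultaneously, so that $(C_i,D_i)$ qualifies as a balanced \emph{pair}. This is a genuine measure-theoretic matching problem, not a purely combinatorial one, and it is where I would lean hardest on Lemma~\ref{l:ample_Glasner_Weiss} (to actually equalize base measures by cutting and transporting clopen pieces between would-be partners) together with the compactness of $M(\Gamma)$. The bound ``more than $3hN(\mcB)$ copies of every $\mcA$-orbit in each exceptional orbit'' is then obtained by, at the very start, cutting so aggressively that every orbit contains many copies of every $\mcA$-orbit, and checking that none of the subsequent operations (pairing, absorbing) reduce this count in the pieces that end up exceptional — the factor $3h$ leaving ample room for the finitely many atoms that get redistributed during the measure-equalization.
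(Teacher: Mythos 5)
Your proposal takes a genuinely different route from the paper, but it contains a fatal combinatorial error at its very first nontrivial step.

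You claim that, given a $\sim_\Gamma$-partition $\mcA'$ compatible with all $U_i,V_i$, you can split an imbalanced orbit $O$ into copies (fragments) ``so that at most one copy carries all the imbalance while the others become $(U_i,V_i)$-balanced,'' because ``the atoms of $O$ lying in $U_i$ but not $V_i$ (and vice versa) can be distributed among the copies.'' This is impossible. By the definition of a refinement, if $O_\mcA=\{A_1,\ldots,A_m\}$ is an $\mcA'$-orbit and $\mcB$ refines $\mcA'$, then every $\mcB$-orbit $O$ satisfies $n_O(A_1)=\cdots=n_O(A_m)$, i.e.\ each copy of $O_\mcA$ contained in $O$ contains exactly one $\mcB$-atom inside each $A_j$. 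Since compatibility means each $A_j$ is either entirely inside or entirely outside $U_i$, every copy of $O_\mcA$ inherits the \emph{same} count $n(U_i)$ and $n(V_i)$ and hence the same imbalance as $O_\mcA$ itself. Cutting cannot concentrate imbalance on one copy; it simply replicates the imbalance vector across all copies. Consequently your subsequent ``normalization'' to unit imbalance vectors in a single coordinate is also unavailable by cutting alone. The only way to change the imbalance of an orbit is to \emph{stack} fragments from orbits with opposite-signed imbalance together, which is the operation the paper uses throughout.

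The paper's argument is structured quite differently: for $n=1$ it minimizes the quantity $N(\mcB)=\max_O|n_O(U)-n_O(V)|$ over refinements, uses Lemma~\ref{l:ample_Glasner_Weiss}(1) to map unions of surplus base atoms into unions of deficit base atoms and stacks them (which strictly decreases $N$ unless every imbalanced orbit already has imbalance $\pm N$), and then merges down to exactly one positive and one negative orbit whose base measures are automatically equal from $\mu(U)=\mu(V)$. For the inductive step it introduces an auxiliary quantity $M(\mcB)$ measuring the $(U_{n+1},V_{n+1})$-imbalance \emph{across exceptional pairs}, shows by a similar minimization that $M$ can be forced to $0$, and then runs the $n=1$ argument inside the complement $Y^\mcB$. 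Your direct-pairing approach would also have to solve the problem you honestly flag at the end — finding partner orbits with equal base measure for every $\mu\in M(\Gamma)$ simultaneously — but Lemma~\ref{l:ample_Glasner_Weiss}(2) produces a map in $[R_\Gamma]$, not an equality of orbit base measures, so this would require additional cutting-and-stacking of exactly the kind the paper performs. In short, even setting aside the matching difficulty you acknowledge, the reduction to unit imbalance via cutting alone does not work, and without it the pairing strategy does not get off the ground.
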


The proof of the proposition is based on an argument initially used in the proof of \cite{Ibarlucia2016}*{Proposition~3.5}; it is the key combinatorial step of our argument. The proof is elementary and based on repeated applications of Lemma \ref{l:ample_Glasner_Weiss}, but somewhat tedious. The last part of the statement plays a technical part in the proof of the classification theorem for minimal ample groups (in the language used in the proof of that theorem , it is used to ensure that the $\tilde \Gamma$-orbits of singular points are distinct) and can safely be ignored on first reading.

We prove the first part of the statement (before \enquote{additionally...}) by induction on $n$. We treat the case $n=1$ separately.
\begin{proof}[The case $n=1$]
Fix $\mcA$ and $U,V$ such that $U \sim_\Gamma^* V$. 

There exists a $\sim_\Gamma$ partition $\mcB$ which refines $\mcA$ and is compatible with $U,V$; for such partitions we simply denote 
$$N(\mcB)=\max \{|n_O(U) - n_O(V)| \colon O \text{ is a } \mcB\text{-orbit}\}$$
Consider a partition $\mcB$ refining $\mcA$ and for which $N(\mcB)$ is minimal (denote it $N$ from now on); we assume that $N \ge 1$, otherwise  $(U,V)$ are $\mcB$-equivalent and we have nothing to do.

Let $O(\alpha_1),\ldots,O(\alpha_p) $ enumerate the $\mcB$-orbits for which $n_O(U)-n_O(V)=N$ (if any such orbit exists). Let also $O(\beta_1),\ldots,O(\beta_q)$ enumerate the orbits such that $n_O(V) >n_O(U)$. 

We have, for all $\mu \in M(\Gamma)$,
\begin{align*}
\mu(U)- \mu(V) & \ge  N  \mu \left( \bigsqcup_{i=1}^p \alpha_i \right) - \sum_{j=1}^q \left( n_{O(\beta_j)}(V)- n_{O(\beta_j)}(U) \right)\mu(\beta_j) \\
               & \ge   N  \mu \left( \bigsqcup_{i=1}^p \alpha_i \right) - N  \mu \left( \bigsqcup_{j=1}^q \beta_j \right)  
\end{align*}
Since $\mu(U)=\mu(V)$ for all $\mu \in M(\Gamma)$, we must have that 
$$\forall \mu \in M(\Gamma) \quad \mu \left( \bigsqcup_{i=1}^p \alpha_i \right) \le \mu \left( \bigsqcup_{j=1}^q \beta_j \right)  $$
and there can be equality only if  $|n_O(V)-n_O(U)|=N$ for any orbit such that $n_O(V)\ne n_O(U)$. If we are not in this situation, we may apply Lemma \ref{l:ample_Glasner_Weiss} to find an element $\gamma$ of $\Gamma$ mapping $\bigsqcup_{i=1}^p \alpha_i$ into $\bigsqcup_{j=1}^q \beta_j$; from this, we can produce a $\sim_\Gamma $-partition $\mcC$ refining $\mcB$, and such that there does not exist any $\mcC$-orbit for which $n_O(U)-n_O(V)=N$. 
Let us detail a bit here how $\mcC$ is produced; this explanation is intended for readers who are not familiar with the cutting-and-stacking procedure and every partition produced in the remainder of this proof will be obtained similarly. Take the $\sim_\Gamma$-partition $\mcB'$ obtained by cutting $\mcB$ (as in Lemma \ref{l: making a KR partition compatible with a clopen}) to refine the $\gamma(\alpha_i)$'s and the $\gamma^{-1}(\beta_j)$'s. Call, for all $i\le p, j\le q$,
$$\alpha_{i,j}=\alpha_i\cap \gamma^{-1}(\beta_j) \text{ and }  \beta_{i,j}=\beta_j\cap \gamma(\alpha_i)$$
Clearly, for all $i\le p, j\le q$, one has that $\beta_{i,j}=\gamma(\alpha_{i,j})$. We now form a new $\sim_\Gamma$-partition by joining the $\mcB'$-orbit of $\alpha_{i,j}$ and $\beta_{i,j}$, for each $i,j$, and leaving the other orbits unchanged (this is the analogue of what we call stacking for Kakutani--Rokhlin partitions). Denote by $\mcC$ this new $\sim_\Gamma$-partition.

Using the same argument with the roles of $U$ and $V$ reversed, we see that we can build a $\sim_\Gamma $-partition $\mcD$ refining $\mcC$ and such that $N(\mcD)< N$, unless any orbit $O$ of $\mcC$ which is not $(U,V)$-balanced is such that $|n_O(U)-n_O(V)|=N$. By definition of $N$, we must thus be in that particular case. 

Let again $O(\alpha_1),\ldots,O(\alpha_p)$ enumerate all $\mcC$-orbits $O$ with $n_O(U) > n_O(V)$, and $O(\beta_1),\ldots,O(\beta_q)$ enumerate those with $n_O(V)>n_O(U)$. Assume that $p \ge 2$. There exists $\gamma \in \Gamma$ mapping $\bigsqcup_{i=2}^p \alpha_i$ into $\bigsqcup_{j=1}^q \beta_j$. By forming appropriate clopen partitions of $\bigsqcup_{i=2}^p \alpha_i$, $\bigsqcup_{j=1}^q \beta_j$ and matching them together (similarly to how we defined $\mcC$ above), we produce a $\sim_\Gamma $-partition with only one orbit $O$ for which $n_O(U)>n_O(V)$. Applying the same argument to this new partition, with the roles of $U$ and $V$ reversed, we obtain a $\sim_\Gamma $-partition with exactly one orbit $O$ for which $n_O(U)-n_O(V)>0$, and one orbit $P$ for which $n_P(V)-n_P(U)>0$. If $O=O(\alpha)$, $P=O(\beta)$, we must have $\mu(\alpha)=\mu(\beta)$ for all $\mu \in M(\Gamma)$, since 
$$\forall \mu \in M(\Gamma) \quad 0=\mu(U)-\mu(V)= N \mu(\alpha)- N \mu(\beta) $$

This proves the first part of the Proposition's statement in case $n=1$.
\end{proof}

Now we assume that the first part of the statement of Proposition \ref{l:almost_equivalent_partitions} has been established for some $n$, and need to explain why it is true for $n+1$. 

We fix $\mcA$, $(U_1,\ldots,U_{n+1})$ and $(V_1,\ldots,V_{n+1})$ such that $U_i \sim_\Gamma^* V_i$ for all $i \in \{1,\ldots,n+1\}$. To simplify notation below we let $U=U_{n+1}$ and $V=V_{n+1}$.
We consider partitions $\mcB$ refining $\mcA$ and such that $(U_1,\ldots,U_n)$, $(V_1,\ldots,V_n)$ are almost $\mcB$-equivalent (these exist by our induction hypothesis), as 
witnessed by exceptional orbits $O(\alpha_1^\mcB),\ldots,O(\alpha_k^\mcB)$, $O(\beta_1^\mcB),\ldots,O(\beta_k^\mcB)$ for some $k \le n$. 

For such a $\mcB$, We denote 
$$Y^\mcB= X \setminus \bigsqcup_{i=1}^k (O(\alpha_i^\mcB) \sqcup O(\beta_i^\mcB))$$

\begin{lemma}\label{l:inductivestep}
Assume that there exists $\mcB$ refining $\mcA$, such that $(U_1,\ldots,U_n)$ and $(V_1,\ldots,V_n)$ are almost $\mcB$-equivalent and each pair $(O(\alpha_i^\mcB),O(\beta_i^\mcB))$ is $(U,V)$-balanced. Then there exists $\mcC$ refining $\mcA$ and such that $(U_1,\ldots,U_{n+1})$ and $(V_1,\ldots,V_{n+1})$ are almost $\mcC$-equivalent.
\end{lemma}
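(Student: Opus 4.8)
The plan is to reduce to the case $n=1$ of Proposition~\ref{l:almost_equivalent_partitions}, carried out inside the clopen set $Y^\mcB$. Write $U = U_{n+1}$ and $V = V_{n+1}$ (the hypotheses of the lemma implicitly require $\mcB$ to be compatible with both). The first step is to record the identity
\[\mu(U \cap Y^\mcB) = \mu(V \cap Y^\mcB) \qquad \text{for all } \mu \in M(\Gamma).\]
Indeed, since $\mcB$ is compatible with $U$ and all atoms of a given $\mcB$-orbit have the same $\mu$-measure, $\mu(U) = \mu(U \cap Y^\mcB) + \sum_{i=1}^k n_{O(\alpha_i^\mcB)}(U)\,\mu(\alpha_i^\mcB) + \sum_{i=1}^k n_{O(\beta_i^\mcB)}(U)\,\mu(\beta_i^\mcB)$, and likewise for $V$; subtracting and using $\mu(U) = \mu(V)$, $\mu(\alpha_i^\mcB) = \mu(\beta_i^\mcB)$, together with $\big(n_{O(\alpha_i^\mcB)}(U) - n_{O(\alpha_i^\mcB)}(V)\big) + \big(n_{O(\beta_i^\mcB)}(U) - n_{O(\beta_i^\mcB)}(V)\big) = 0$ (the pair-balance hypothesis), all exceptional terms cancel. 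In other words $U \cap Y^\mcB \sim_\Gamma^* V \cap Y^\mcB$.

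Next I would re-run the proof of the case $n=1$ of Proposition~\ref{l:almost_equivalent_partitions}, but performing every cut and stack on orbits contained in $Y^\mcB$ only, never touching the exceptional orbits $O(\alpha_i^\mcB), O(\beta_i^\mcB)$. Concretely: minimise $\max\{|n_O(U) - n_O(V)| : O \text{ a } \mcB'\text{-orbit with } O \subseteq Y^\mcB\}$ over the $\sim_\Gamma$-partitions $\mcB'$ that refine $\mcB$ and leave the exceptional orbits untouched, then carry out the cutting-and-stacking steps of the $n=1$ proof. This localisation is legitimate because each such step is driven by an element $\gamma \in \Gamma$, produced by Lemma~\ref{l:ample_Glasner_Weiss}, which sends a union $A$ of orbits contained in $Y^\mcB$ into a union $B$ of orbits contained in $Y^\mcB$; then $\gamma(A) \subseteq B \subseteq Y^\mcB$, and $\gamma$ is only ever used to witness $\sim_\Gamma$-equivalences between clopen subsets of $Y^\mcB$ — one never needs $\gamma$ to preserve $Y^\mcB$ itself. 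The measure inequalities needed to invoke Lemma~\ref{l:ample_Glasner_Weiss} follow, exactly as in the $n=1$ proof, from the identity above. The outcome is a $\sim_\Gamma$-partition $\mcC$ refining $\mcB$ (hence $\mcA$, refinement of $\sim_\Gamma$-partitions being transitive), still compatible with all $U_j, V_j$ for $j \le n+1$, in which the exceptional orbits $O(\alpha_i^\mcB), O(\beta_i^\mcB)$ are unchanged, every orbit contained in $Y^\mcB$ is $(U, V)$-balanced except for a single pair of orbits $O(\alpha), O(\beta) \subseteq Y^\mcB$, and $(O(\alpha), O(\beta))$ is a $(U, V)$-balanced pair.

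It then remains to verify that $(U_1, \ldots, U_{n+1})$ and $(V_1, \ldots, V_{n+1})$ are almost $\mcC$-equivalent, with exceptional orbits $O(\alpha_1^\mcB), \ldots, O(\alpha_k^\mcB), O(\alpha)$ and $O(\beta_1^\mcB), \ldots, O(\beta_k^\mcB), O(\beta)$ — a list of $k + 1 \le n + 1$ pairs. Each old pair $(O(\alpha_i^\mcB), O(\beta_i^\mcB))$ is a $(U_j, V_j)$-balanced pair for $j \le n$ (it was, and these orbits were not altered) and a $(U, V)$-balanced pair by hypothesis, hence a $(U_j, V_j)$-balanced pair for every $j \le n+1$. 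The new pair $(O(\alpha), O(\beta))$ is a $(U, V)$-balanced pair by construction; since $O(\alpha), O(\beta) \subseteq Y^\mcB$ these orbits are $(U_j, V_j)$-balanced individually for $j \le n$, so both sides of the pair-balance identity vanish there, and together with $\mu(\alpha) = \mu(\beta)$ (part of being a $(U,V)$-balanced pair) this makes it a $(U_j, V_j)$-balanced pair for every $j \le n+1$. Finally, any orbit of $\mcC$ not on this list is contained in $Y^\mcB$, hence $(U, V)$-balanced by construction, and $(U_j, V_j)$-balanced for $j \le n$: the localised cutting-and-stacking only ever cuts an orbit into copies — on which the counts $n_O(U_j)$ are unchanged, using compatibility of $\mcB$ with $U_j$ — and merges such copies, for which the counts simply add, starting from orbits contained in $Y^\mcB$ which are $(U_j, V_j)$-balanced.

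The main obstacle is the second step: making rigorous that the $n=1$ construction genuinely confines itself to $Y^\mcB$, and the accompanying bookkeeping that $(U_j, V_j)$-balancedness for $j \le n$ of the orbits inside $Y^\mcB$ is preserved under every cut and stack (which it is, since cuts produce copies and stackings add counts). Everything else reduces to the $n=1$ case already in hand and the arithmetic of the definitions.
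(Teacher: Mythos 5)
Your proposal is correct and follows essentially the same route as the paper's proof: establish $U\cap Y^\mcB \sim_\Gamma^* V\cap Y^\mcB$ (which the paper merely asserts and you prove explicitly), then re-run the $n=1$ construction localised to $Y^\mcB$, and close by the bookkeeping that cutting preserves and stacking adds the counts $n_O(U_j)$, $n_O(V_j)$, so the new orbits inside $Y^\mcB$ stay $(U_j,V_j)$-balanced for $j\le n$ while the old exceptional pairs are untouched. The one detail the paper addresses that you leave implicit is the degenerate case $Y^\mcB=\emptyset$: the paper first stacks a small fragment of $O(\alpha_1^\mcB)$ onto a $\sim_\Gamma$-equivalent fragment of $O(\beta_1^\mcB)$, replacing the first exceptional pair by a smaller one and thereby forcing $Y^\mcC\neq\emptyset$ before running the $n=1$ argument. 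In your formulation, when $Y^\mcB=\emptyset$ the localised construction simply does nothing and $\mcC=\mcB$ already works, because by hypothesis each exceptional pair is $(U_j,V_j)$-balanced for all $j\le n+1$ and there are no other orbits to check; so this is not a gap, but you should say \enquote{at most one new exceptional pair} rather than \enquote{a single pair} so that the degenerate outcome (and the $k=0$ outcome of the $n=1$ construction) is covered by your statement.
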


\begin{proof}
We may assume that $Y^\mcB$ is nonempty: if it is empty, take $\tau \subsetneq \alpha_1^\mcB$, then find $\pi \subsetneq \beta_1^\mcB$ such that $\tau \sim_\Gamma \pi$ (Lemma \ref{l:ample_Glasner_Weiss} shows that this is possible). Then form a partition $\mcC$ refining $\mcB$ by stacking together the $\mcB$-orbits of $\tau$ and $\pi$, and having $O(\alpha_1^\mcB \setminus \tau)$, $O(\beta_1^\mcB \setminus \pi)$ form an exceptional column pair. This partition is still such that $(U_1,\ldots,V_n)$ and $(V_1,\ldots,V_n)$ are $\mcC$-equivalent, and $Y^\mcC$ is nonempty since it contains $\tau$.

Assuming that $Y^\mcB$ is nonempty, note that $U \cap Y^\mcB \sim_\Gamma^* V \cap Y^\mcB$; we simply repeat the argument of the proof of the case $n=1$ of Proposition \ref{l:almost_equivalent_partitions}, working inside $Y^\mcB$. The point is that for each $i$ the construction joins together fragments of orbits which are $(U_i,V_i)$-balanced for all $i \le n$, and these new orbits are still $(U_i,V_i)$-balanced for each $i \le n$.

This produces the desired $\mcC$; every $\mcC$-orbit outside $Y^\mcB$ coincides with a $\mcB$-orbit. This adds at most one exceptional pair of orbits to the exceptional pairs of orbits of $\mcB$ (which remain exceptional for $\mcC$).
\end{proof}

% PROPOSITION : changer le pgcd en une division euclidienne, et faire diminuer $|N|$, sans doute pas plus court mais me semble plus naturel... voire présenter le tout comme précédemment : on prend une partition avec $|N|$ minimal, et on voit que si ce n'est pas 0 on peut le faire diminuer. On a besoin d'une définition pour "
% fragment" je pense, ce sera plus clair.

% PROPOSITION explicite, à faire commencer à la place de "so we have to deal with the case where, say..." : 

\begin{proof}[Inductive step of the proof of Proposition \ref{l:almost_equivalent_partitions}]
For a $\sim_\Gamma$-partition $\mcB$ refining $\mcA$ and for which $(U_1,\ldots,U_n)$, $(V_1,\ldots,V_n)$ are almost $\mcB$-equivalent, we denote 
$$M(\mcB)=\max_{1\le i\le k}|n_{O(\alpha_i^\mcB)}(U)+n_{O(\beta_i^\mcB)}(U) - n_{O(\alpha_i^\mcB)}(V)-n_{O(\beta_i^\mcB)}(V)|$$
Take $\mcB$ so that $M(\mcB)$ is minimal (denote it $M$ from now on). Our goal is to prove that $M=0$, for then we obtain the desired result by applying Lemma \ref{l:inductivestep}.

A key point to note here is that, if we join together a fragment of some $O(\alpha_i^\mcB)$ with a fragment of $O(\beta_i ^\mcB)$ (using Lemma \ref{l:ample_Glasner_Weiss} as we did earlier) to form a finer partition $\mcC$, this finer partition satisfies the same conditions as $\mcB$, and the new orbit obtained after this joining is contained in $Y^\mcC$.

Now, assume for a contradiction that $M \ge 1$. Up to reordering (and exchanging the roles of $U$ and $V$), we suppose that 
$$n_{O(\alpha_1^\mcB)}(U)+n_{O(\beta_1^\mcB)}(U) - n_{O(\alpha_1^\mcB)}(V)-n_{O(\beta_1^\mcB)}(V)=M$$
If there is no orbit $O$ in $Y_\mcB$ such that $n_O(V)-n_O(U)=K>0$, there must exist $1\le j\le k$ such that 
$$n_{O(\alpha_j^\mcB)}(V)+n_{O(\beta_j^\mcB)}(V) - n_{O(\alpha_j^\mcB)}(U)-n_{O(\beta_j^\mcB)}(U)>0$$
As we already mentioned above, if we join together (using $\Gamma$) a fragment of $O(\alpha_j^\mcB)$ with a fragment of $O(\beta_j^\mcB)$, we create a partition $\mcC$ refining $\mcB$, satisfying all the relevant conditions, and such that an orbit $O$ with $n_O(V)-n_O(U)=K$ exists in $Y^\mcC$; so we may as well assume that such an orbit exists in $Y^\mcB$.

Let $K=qM+r$ be the euclidean division of $K$ by $M$. If $q>0$, we can join together a small fragment of $O(\alpha_1^\mcB)$, one of $O(\beta_1^\mcB)$ and one of $O$ to create a finer partition $\mcB'$ along with an orbit $O'$ contained in $Y_{\mcB'}$ and such that $n_{O'}(V)-n_{O'}(U)=K-M$. Repeating this $q-1$ times, we obtain a finer partition $\mcB''$ along with an orbit $O(\delta)$ inside $Y^{\mcB''}$ such that 
\[0 < n_{O(\delta)}(V)-n_{O(\delta)}(U)=M+r< 2M \]
If $q=0$ then choose $\delta$ such that $O=O(\delta)$; the above inequality is also satisfied.
Again to simplify notation, we assume that such an orbit already exists in $\mcB$ (since we just reduced to that case).

Choose ${\alpha'_1}$ inside $\alpha_1^ \mcB$ such that $0< \mu({\alpha'_1}) < \mu(\delta)$ for all $\mu \in M(\Gamma)$.

Then find $\gamma \in \Gamma$ such that $\gamma({\alpha'_1}) \subset \beta_1^\mcB$, and set ${\beta'_1}=\gamma({\alpha'_1}^\mcB)$.
Cut the orbit of $\alpha_ 1^\mcB$ to form two orbits $O({\alpha'_1})$ and $O(\alpha_1^\mcB \setminus {\alpha'_1})$; do the same to the orbit of $\beta_1^\mcB$.
Then join together the orbit of $\alpha_1^\mcB \setminus {\alpha'_1}$ and the orbit of $\beta_1^\mcB \setminus {\beta'_1}$ (thus producing an orbit which is $(U_i,V_i)$-balanced for all $i \in \{1,\ldots,n\})$; and the orbit of ${\alpha'_1}$ with a fragment of $O(\delta)$, which is possible because $\mu({\alpha'_1}) < \mu(\delta)$ for all $\mu \in M(\Gamma)$.

We obtain a $\sim_\Gamma$-partition $\mcC$ which refines $\mcB$, and such that $(U_1,\ldots,U_n)$ and $(V_1,\ldots,V_n)$ are almost $\mcC$-equivalent with exceptional $\mcC$-orbits of the form 
\[O({\alpha'_1}),O(\alpha_2^\mcB),\ldots,O(\alpha_k^\mcB),O({\beta'_1}),\ldots,O(\beta_k^\mcB)\]
Further, 
\[|n_{O({\alpha'_1})}(U)+n_{O({\beta'_1})}(U) - n_{O({\alpha'_1})}(V)-n_{O({\beta'_1})}(V)|<M\]
Applying this argument repeatedly, we conclude that there exists a $\sim_\Gamma$-partition $\mcD$ refining $\mcB$ and such that $M(\mcD)< M(\mcB)$. This is a contradiction, so we conclude that $M=0$ as promised.
\end{proof}

\begin{proof}[End of the proof of Proposition \ref{l:almost_equivalent_partitions}]
Let us now see why the \enquote{additionally} part holds true. 
We first pick a $\sim_\Gamma$-partition $\mcB$ satisfying the 
conditions in the first part of the lemma, and denote as before $O(\alpha_1^\mcB),\ldots,O(\alpha_k^\mcB)$, $O(\beta_1^\mcB),\ldots,O(\beta_k^\mcB)$ the exceptional $\mcB$-orbits and
$$Y^\mcB=X \setminus \bigsqcup_{i=1}^k (O(\alpha_i^\mcB) \cup O(\beta_i^\mcB))$$
By joining together fragments of some $O(\alpha_i^\mcB)$ and $O(\beta_i^\mcB)$ if necessary, we can guarantee that for every $\mcA$-orbit $O$ there is a fragment of $O$ contained in $Y^\mcB$. Then, by joining together some sufficiently small fragments of orbits contained in $Y^\mcB$, we ensure that in $Y^\mcB$ there exists an orbit $O(\delta)$ which contains more than $3hN(\mcB))$ copies of each $\mcA$-orbit. 

Again by joining together if necessary some fragments of $O(\alpha_i^\mcB)$ and $O(\beta_i^\mcB)$, we can assume that $2n \mu(\alpha_i^\mcB)  < \mu(\delta)$ for all $\mu \in M(\Gamma)$ and all $i$. Then we may join for each $i$ a fragment of $O(\delta)$ and a fragment of $O(\alpha_i^\mcB)$, as well as a fragment of $O(\delta)$ and a fragment of $O(\beta_i^\mcB)$. Each orbit in these new exceptional orbit pairs contains many copies of each $\mcA$-orbit.
\end{proof}

The fact that $k \le n$ in the lemma above does not play a part in our arguments; but it is important to have some control over the exceptional orbits. 

In the remainder of the paper, we often identify a clopen partition $\mcA$ with the Boolean algebra it generates (and start doing this in the lemma below).

\begin{lemma}\label{l:from_partitions_to_an_ample_group}
Let $\simeq$ be a full equivalence relation on $\Clopen(X)$, and $(\mcB_n)$ be a sequence of $\simeq$-partitions such that
\begin{itemize}
\item For all $n$, $\mcB_{n+1}$ refines $\mcB_n$.
\item For each clopen $U$, there exists $n$ such that $\mcB_n$ is compatible with $U$.
\end{itemize} 

Then there exists an ample group $\Lambda=\bigcup_n \Lambda_n$ such that 
\begin{itemize}
\item For all $n$, $(\mcB_n,\Lambda_n)$ is a unit system.
\item For all $n$, all $\alpha \in \mcB_n$, the $\mcB_n$-orbit of $\alpha$ coincides with $\Lambda_n \alpha$.
\item For any clopen $A$ and any $\lambda \in \Lambda$ one has $A \simeq \lambda(A)$.  
\end{itemize}
\end{lemma}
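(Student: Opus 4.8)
The plan is to build the groups $\Lambda_n$ directly, by producing a coherent system of homeomorphisms between $\simeq$-equivalent atoms at all levels at once. Two preliminary remarks dispose of most of the bookkeeping. First, since $X$ is compact each $\mcB_n$ is finite, so it is enough to produce for each $n$ a finite subgroup $\Lambda_n\le\Homeo(X)$ which, via its action on atoms, is isomorphic to $\prod_O\mathrm{Sym}(O)$ ($O$ ranging over the $\mcB_n$-orbits) and permutes atoms exactly within their $\mcB_n$-orbits: then $(\mcB_n,\Lambda_n)$ will be a unit system whose orbit structure on atoms is that of $\mcB_n$, giving the first two bullets, \emph{provided} we also arrange $\Lambda_n\subseteq\Lambda_{n+1}$. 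Second, if $\lambda\in\Lambda_n$ and $A$ is any clopen, choose $m\ge n$ with $\mcB_m$ compatible with $A$; since $\lambda\in\Lambda_m$ permutes the $\mcB_m$-atoms within their $\mcB_m$-orbits and atoms in a common orbit of a $\simeq$-partition are $\simeq$-equivalent, writing $A$ as a disjoint union of $\mcB_m$-atoms and using fullness of $\simeq$ gives $A\simeq\lambda(A)$, which is the third bullet. The same idea shows $\Lambda=\bigcup_n\Lambda_n$ is a full group (absorb a given element into a large $\Lambda_n$), it is countable and locally finite as an increasing union of finite groups, and for $\gamma\in\Lambda_n$ the set $\{x\colon\gamma(x)=x\}$ is the clopen union of the atoms of $\mcB_n$ fixed setwise by $\gamma$, because — by the rigidity we build in — $\gamma$ restricts to the identity on any atom it fixes.

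The real work is to choose, for every $n$ and every ordered pair $(\alpha,\beta)$ of atoms of $\mcB_n$ in a common $\mcB_n$-orbit, a homeomorphism $\theta^n_{\alpha\to\beta}\colon\alpha\to\beta$ with:
\begin{enumerate}
\item $\theta^n_{\alpha\to\alpha}=\mathrm{id}_\alpha$ and $\theta^n_{\beta\to\gamma}\circ\theta^n_{\alpha\to\beta}=\theta^n_{\alpha\to\gamma}$ (a groupoid-coherence at level $n$);
\item $\theta^n_{\alpha\to\beta}$ carries each $\mcB_{n+1}$-atom $C\subseteq\alpha$ onto a $\mcB_{n+1}$-atom $C'\subseteq\beta$ lying in the same $\mcB_{n+1}$-orbit as $C$, and the restriction of $\theta^n_{\alpha\to\beta}$ to $C$ equals $\theta^{n+1}_{C\to C'}$ (coherence across levels).
\end{enumerate}
Given such a system, I set $\Lambda_n=\{g\in\Homeo(X)\colon g$ permutes the atoms of $\mcB_n$ within their $\mcB_n$-orbits and restricts on each atom $\alpha$ to $\theta^n_{\alpha\to g(\alpha)}\}$. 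Property (1) makes $\Lambda_n$ a subgroup of $\Homeo(X)$ isomorphic to the product of the symmetric groups of the orbits, and the three unit-system axioms are immediate (for the third: if $h$ permutes atoms and agrees on each atom $A$ with some element of $\Lambda_n$, then $h$ permutes atoms within orbits and restricts on $A$ to $\theta^n_{A\to h(A)}$, so $h\in\Lambda_n$); moreover every orbit-preserving permutation of the atoms is realized (glue the prescribed $\theta^n$'s over a finite clopen partition), so the $\mcB_n$-orbit of an atom is its $\Lambda_n$-orbit. Property (2) yields $\Lambda_n\subseteq\Lambda_{n+1}$: an element $g$ of $\Lambda_n$ then permutes the $\mcB_{n+1}$-atoms within their $\mcB_{n+1}$-orbits and restricts on each such atom $C$ to $\theta^{n+1}_{C\to g(C)}$.

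Finally, I would construct the $\theta^n_{\alpha\to\beta}$ from coherent one-step matchings. For each $n$ and each $\mcB_n$-orbit $\rho$, fix a representative $r_\rho\in\rho$ and, for each $\alpha\in\rho$, a bijection $\tau^n_\alpha$ from the $\mcB_{n+1}$-atoms inside $r_\rho$ onto the $\mcB_{n+1}$-atoms inside $\alpha$ preserving $\mcB_{n+1}$-orbits — possible because, by the definition of refinement of $\simeq$-partitions, $\alpha$ and $r_\rho$ contain equally many $\mcB_{n+1}$-atoms from each $\mcB_{n+1}$-orbit — with $\tau^n_{r_\rho}=\mathrm{id}$. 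Put $s^n_{\alpha\to\beta}=\tau^n_\beta\circ(\tau^n_\alpha)^{-1}$, an orbit-preserving, groupoid-coherent one-step matching of $\mcB_{n+1}$-atoms. For $m>n$ and a $\mcB_m$-atom $D\subseteq\alpha$, let $\alpha=D_n\supseteq D_{n+1}\supseteq\cdots\supseteq D_m=D$ be the tower of atoms above $D$, set $E_{n+1}=s^n_{\alpha\to\beta}(D_{n+1})$ and inductively $E_{k+1}=s^k_{D_k\to E_k}(D_{k+1})$ (legitimate since $D_k$ and $E_k$ always lie in a common $\mcB_k$-orbit), and let $\theta^n_{\alpha\to\beta}$ be the unique homeomorphism $\alpha\to\beta$ sending each such $D$ to $E_m$; this is well defined and a homeomorphism because the assignments $D\mapsto E_m$ form a compatible family of bijections between the inverse systems $(\{\mcB_m\text{-atoms}\subseteq\alpha\})_{m\ge n}$ and $(\{\mcB_m\text{-atoms}\subseteq\beta\})_{m\ge n}$ ordered by refinement. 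Properties (1) and (2) then come out by unwinding this telescoping definition — (2) because the telescoping computing $\theta^n_{\alpha\to\beta}$ on $C$ and then deeper is literally the one computing $\theta^{n+1}_{C\to s^n_{\alpha\to\beta}(C)}$, and (1) from the groupoid-coherence of the $s^k$ by induction on $m$ — and each $\theta^n_{\alpha\to\beta}$ respects $\simeq$ by the fullness argument already used. I expect the only real obstacle to be packaging condition (2) cleanly enough that the inclusions $\Lambda_n\subseteq\Lambda_{n+1}$ become transparent; this is exactly what forces one to commit to a single coherent tower of atom-matchings rather than making independent choices at each level.
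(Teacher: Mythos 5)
Your proof is correct and takes essentially the same approach as the paper: the paper also fixes enumerations of the $\mcB_{n+1}$-atoms inside each $\mcB_n$-atom and extends adjacent transpositions along them (see the parenthetical remark that ``our extension depends on the choice of enumerations''), which is exactly the coherence data your maps $\tau^n_\alpha$ encode. You spell out the groupoid compatibility more explicitly than the paper, which simply declares $\Lambda_n\subseteq\Lambda_{n+1}$ after extending the transpositions, but the underlying construction is the same.
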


\begin{proof}
Denote by $\mcO_n$ the set of $\mcB_n$-orbits, enumerate it as $(O_1^n,\ldots,O_{k_n}^n)$, and for $i \in \{1,\ldots,k_n\}$ denote by $h^n_i$ the cardinality of $O^n_i$. Each group $\Lambda_n$ will be isomorphic, as an abstract group, to $\prod_{i=1}^{k_n} \sym_{\mcO^n_i}$.

Let $\mcB_{-1}$ be the trivial partition, and $\Lambda_{-1}=\{\mathrm{id}\}$ to initialize the process; then assume that $\Lambda_n$ has been constructed. For every $i$, denote by $(A_{i,j})_{j< h^n_i}$ an enumeration of $\mcO^n_i$, and by $\tau^i_j$ the transposition $(A_{i,j},A_{i,j+1})$ of $\sym_{\mcO^n_i}$ for $j< h^n_i-1$. We first use the fact that $\mcB_{n+1}$ refines $\mcB_n$ to extend each $\tau^i_j$ to a permutation of the atoms of each $\mcO_k^{n+1}$; to see how this is done, fix some $\tau=\tau^i_j$, and let $\alpha,\beta \in \mcO^{n}_i$ be the two only atoms that are not fixed by $\tau$ . For any $\mcB_{n+1}$-orbit $\mcO^{n+1}_k$, there are as many atoms $\alpha^k_1,\ldots,\alpha^k_l$ contained in $\alpha$ as there are atoms $\beta^k_1,\ldots,\beta^k_l$ contained in $\beta$, because $\mcB_{n+1}$ refines $\mcB_n$. So we can extend $\tau$ to a permutation of the atoms of $\mcB_{n+1}$, by setting $\tau(\alpha^k_m)=\beta^k_m$, $\tau(\beta^k_m)=\alpha^k_m$ for every $k,m$ (we note here that our extension depends on the choice of enumerations of the atoms $\alpha_l^k$, $\beta_l^k$; this  freedom will be used during the proof of the classification theorem for minimal ample groups).

Once this is done for every $\tau_i^j$, we see $\Lambda_n$ as a subgroup of the permutation group  $\Lambda_{n+1}=\prod_{i=1}^{k_{n+1}} \sym_{\mcO^{n+1}_i}$ and can move on to the next step.

Since any clopen set eventually belongs to some $\mcB_n$, we can view each $\lambda \in \Lambda$ as an automorphism of $\Clopen(X)$, that is, a homeomorphism of $X$. 

By construction, $\Lambda$ is then an ample subgroup of $\Homeo(X)$, and the $\mcB_n$-orbit of any $\alpha \in \mcB_n$ coincides with $\Lambda_n \alpha$.

To check the last point, fix a clopen $A$ and $\lambda \in \Lambda$. There exists $n$ such that $A$ is a union of atoms $\alpha_1,\ldots, \alpha_n$ of $\mcB_n$, and $\lambda$ belongs to $\Lambda_n$. Then 
$$\lambda(A) = \bigsqcup \lambda(\alpha_n) $$
and for all $n$, we have that $\lambda(\alpha_n) \simeq \alpha_n$ since $\mcB_n$ is a $\simeq$-partition. Since $\simeq$ is full, this establishes the final condition of the lemma.
\end{proof}

\section{The classification theorem for minimal ample groups}
\subsection{An absorption theorem}
We now prove an ``absorption theorem,'' which is a particular case of \cite{Giordano2004}*{Theorem~4.6}; see also the further generalizations in \cite{Giordano2008}*{Lemma~4.15} and \cite{Matui2008}*{Theorem~3.2}. 

\begin{theorem}\label{t:absorption}
Let $\Gamma$ be an ample subgroup acting minimally. Let $K= K_1 \sqcup \sigma(K_1)$ be a closed subset of $X$ without isolated points, where $K_1$ is closed and $\sigma \in \Homeo(K)$ is an involution. Assume that $K$ is $\Gamma$-sparse.

Denote by $R_{\Gamma,K}$ the finest equivalence relation coarser than $R_\Gamma$ and for which $k,\sigma(k)$ are equivalent for all $k \in K$. Then there exists an ample group $\Sigma$ whose action induces $R_{\Gamma,K}$; furthermore $\Sigma$ and $\Gamma$ are orbit equivalent.
\end{theorem}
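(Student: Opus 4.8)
The plan is to produce the ample group $\Sigma$ inducing $R_{\Gamma,K}$ by the machinery of $\sim_\Gamma^*$-partitions developed in Section \ref{s:balanced}, together with Theorem \ref{t:Krieger_pointed} to obtain the orbit equivalence. Write $R = R_{\Gamma,K}$. The first observation is that, since $K = K_1 \sqcup \sigma(K_1)$ has no isolated points and is $\Gamma$-sparse, $R$ is obtained from $R_\Gamma$ by gluing, for each $k \in K_1$, the $\Gamma$-orbit of $k$ to the $\Gamma$-orbit of $\sigma(k)$; as $K$ meets each $\Gamma$-orbit at most once, every $R$-class is either a $\Gamma$-class (disjoint from $K$) or the union of exactly two $\Gamma$-classes. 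I would first record that any $g \in \Homeo(X)$ implementing $R$ automatically satisfies $M(\Gamma) \subseteq M(\langle \Gamma, g\rangle)$ only if one is careful — in fact the point is that $\Sigma$, whatever it is, will have $M(\Sigma) = M(\Gamma)$, because gluing countably many pairs of orbits changes no atomless measure; this is what lets us apply the earlier Glasner--Weiss-type lemmas relative to $M(\Gamma)$ throughout.

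Next I would construct $\Sigma$. Fix an exhaustive refining sequence of $K$-compatible finite unit systems $(\mcA_n, \Gamma_n)$ for $\Gamma$ (Lemma \ref{l:suitable_unit_systems}). The idea is to enrich each $\mcA_n$ so that, in the limit, the extra permutations glue precisely the right orbits. Concretely, for a clopen neighborhood basis of $K$ one can, by $K$-compatibility, arrange that in $\mcA_n$ the atoms meeting $K_1$ and the atoms meeting $\sigma(K_1)$ are matched up in a way converging to $\sigma$; using Lemma \ref{l:clopen_intersecting_K} (to find clopens of controlled measure around pieces of $K_1$ and $\sigma(K_1)$ that are $\sim_\Gamma$-equivalent) one produces, at stage $n$, a $\sim_\Gamma^*$-partition $\mcB_n$ refining $\mcA_n$ together with a pairing of two exceptional orbits whose "bases" shrink down to $K_1$ and $\sigma(K_1)$ respectively and are carried onto one another compatibly with $\sigma$. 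Applying Lemma \ref{l:from_partitions_to_an_ample_group} to the resulting refining sequence $(\mcB_n)$ (with $\simeq\ =\ \sim_\Gamma^*$, which is full) yields an ample group $\Sigma = \bigcup_n \Sigma_n$ with $A \sim_\Gamma^* \lambda(A)$ for all $\lambda \in \Sigma$; since all orbit-joinings were along $\sim_\Gamma$-equivalences one in fact keeps $A \sim_\Gamma \lambda(A)$ off the exceptional orbits, and the exceptional orbits are set up so that $R_\Sigma = R$: a point $x \notin K$-orbit keeps its $\Gamma$-orbit, while for $k \in K_1$ the $\Sigma$-orbit of $k$ picks up $\sigma(k)$ and its whole $\Gamma$-orbit, via the matched exceptional orbits, and nothing more. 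Verifying that $\Sigma$ acts minimally is then automatic since $R_\Sigma \supseteq R_\Gamma$.

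Finally, for the orbit equivalence $\Sigma \sim \Gamma$: I would show $\Gamma$ and $\Sigma$ induce the same relation on $\Clopen(X)$, i.e.\ $\sim_\Gamma\ =\ \sim_\Sigma$. One inclusion is clear ($\Gamma \le \Sigma$ up to the construction, or at least $\sim_\Gamma \subseteq \sim_\Sigma$); for the converse, the only new $\Sigma$-moves are those along the exceptional orbits, and these were built by joining fragments that are already $\sim_\Gamma$-equivalent, so they induce no new identifications of clopen sets — here the "additionally" clause of Proposition \ref{l:almost_equivalent_partitions} (the exceptional orbits contain many copies of every $\mcA$-orbit, keeping the bases of the two exceptional columns from colliding) is what guarantees the exceptional orbits can be kept disjoint and the singular points distinct. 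Hence $\Gamma$ and $\Sigma$ have isomorphic closures (indeed equal), and both act minimally, so Theorem \ref{t:Krieger_pointed} with $K = L = \emptyset$ gives $g \in \Homeo(X)$ with $g\Sigma g^{-1} = \Gamma$, i.e.\ $\Gamma$ and $\Sigma$ are orbit equivalent.

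\textbf{Main obstacle.} The delicate point is the simultaneous control of two things: making the two exceptional orbits' bases shrink to $K_1$ and $\sigma(K_1)$ with the matching converging to $\sigma$ (so that $R_\Sigma = R$ exactly, not coarser), while keeping every joining along genuine $\sim_\Gamma$-equivalences (so that $\sim_\Sigma = \sim_\Gamma$ and $M(\Sigma) = M(\Gamma)$). This is precisely the "tension between two equivalence relations" flagged in the introduction, and it is where the balanced-partition bookkeeping of Section \ref{s:balanced}, especially Proposition \ref{l:almost_equivalent_partitions}, has to be invoked carefully rather than as a black box.
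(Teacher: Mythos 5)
Your overall architecture (build an ample group inducing the glued relation via a refining sequence of partitions, then invoke a Krieger-type theorem for the orbit equivalence) matches the paper's, but there is a genuine gap at the step you yourself flag as the main obstacle, and it is one the paper's proof is designed to avoid rather than solve.

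You propose to construct $\Sigma$ so that its ``singular'' structure shrinks directly down to the given $K$, i.e.\ to arrange the exceptional orbits so that their singular atoms cover $K_1$ and $\sigma(K_1)$ and the involution $\pi$ converges to $\sigma$. This cannot be done within the framework in front of you. In the construction, the singular atoms are constrained to lie inside the two exceptional orbits $O(\alpha_n)$, $O(\beta_n)$ at each stage; but since the unit systems are $K$-compatible, each $\Gamma_n$-orbit contains at most one atom meeting $K$, so the exceptional orbits can absorb at most two of the (eventually many) atoms of $\mcA_n$ that meet $K$. The remaining pieces of $K$ sit in non-exceptional orbits and hence cannot become singular. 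Dropping $K$-compatibility to cram all the $K$-meeting atoms into the exceptional orbits breaks the sparsity bookkeeping (``at most one singular atom per $\Gamma_n$-orbit'') and the applications of Lemma \ref{l:clopen_intersecting_K}. So the set $S$ of singular points your construction produces is some Cantor set the construction itself determines, not the prescribed $K$; you have no way to steer it there.

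The paper's proof handles exactly this by decoupling the two problems. It first builds, with \emph{no reference to $K$}, an abstract $\Gamma$-sparse closed set $S = S_1 \sqcup \pi(S_1)$ without isolated points and an ample group $\Lambda$ (generated as a full group by $\Gamma$ and $\pi$) inducing $R_{\Gamma,S}$ with $\sim_\Lambda = \sim_\Gamma$; this gives $\Lambda$ orbit equivalent to $\Gamma$ by the \emph{unpointed} Krieger theorem. Then, and only then, it chooses a homeomorphism $h\colon S \to K$ intertwining $\pi$ with $\sigma$ (possible since both are Cantor sets split by involutions) and applies the \emph{pointed} Theorem \ref{t:Krieger_pointed} with the nonempty sparse sets $S$ and $K$ to produce $g \in \Homeo(X)$ with $g\Gamma g^{-1} = \Gamma$ and $g_{|S} = h$; conjugating $\Lambda$ by $g$ gives $\Sigma$. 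Your proposal only invokes Theorem \ref{t:Krieger_pointed} with $K = L = \emptyset$, i.e.\ the unpointed version. The pointed version with nonempty $K,L$ is precisely the new ingredient of the paper, and it is indispensable here: it is what lets you move the abstractly constructed sparse set onto the given one, sidestepping the construction problem you ran into.
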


At the end of the paper, we explain how one can remove the assumption that $K$ has no isolated points from the above statement.

\begin{proof}
The strategy of proof is as follows: we show first that there exists a sparse set $S_1 \sqcup \pi(S_1)$ without isolated points (with $\pi$ a homeomorphic involution), such that $R_{\Gamma,S}$ is induced by an ample group which is orbit equivalent to $\Gamma$. Then we use Theorem \ref{t:Krieger_pointed} to conclude that this result holds for $R_{\Gamma,K}$.

First, we fix a refining  sequence $(\mcA_n,\Gamma_n)$ of finite unit systems such that $\Gamma = \bigcup_n \Gamma_n$, $\Clopen(X)=\bigcup_n \mcA_n$, with the additional property that for all $n$ there exists two disjoint $\mcA_n$-orbits $O_n(\alpha_n)$, $O_n(\beta_n)$ (which we call the \emph{exceptional} orbits) such that:
\begin{itemize}
%\item For all $n$, $\alpha_{n+1} \subset \alpha_n$ and $\beta_{n+1} \subset \beta_n$.
\item For all $n$, $\alpha_n$ and $\beta_n$ are $\mcA_{n+1}$-equivalent.
\item For all $n$, denote by $h_n$ the cardinality of $\mcA_n$. Then $O_{n+1}(\alpha_{n+1})$ contains at least $2 h_n$ copies of $O_n(\alpha_n)$, and $O_{n+1}(\beta_{n+1})$ contains at least $2h_n$ copies of $O_n(\beta_n)$.
\end{itemize}
It is straightforward to build such a sequence using the same techniques (cutting and stacking, and Lemma \ref{l:ample_Glasner_Weiss}) as in the previous section, so we do not give details here.
We denote by $\tilde \mcA_n$ the refinement of $\mcA_n$ obtained by joining together $O(\alpha_n)$ and $O(\beta_n)$ (the corresponding orbit is called the exceptional orbit of $\tilde \mcA_n$). Since each orbit of $\mcA_{n+1}$ is $(\alpha_n,\beta_n)$-balanced, $\tilde \mcA_{n+1}$ refines $\tilde \mcA_n$. 

We define inductively a homeomorphic involution $\pi$ with the following properties:
\begin{itemize}
    \item For all $n$, $\pi$ induces an automorphism of $\tilde \mcA_n$, which maps each non-exceptional orbit to itself.
    \item Denoting by $\Lambda_n$ the subgroup of $\Aut(\tilde \mcA_n)$ generated by $\Gamma_n$ and $\pi$, $(\tilde \mcA_n,\Lambda_n)$ is a unit system and the $\Lambda_n$-orbit of any atom of $\tilde \mcA_n$ coincides with its $\tilde \mcA_n$-orbit.
    \item Say that an atom $\alpha$ of $\mcA_n$ is \emph{singular} if $\pi$ does not coincide with an element of $\Gamma_n$ on $\alpha$ ($\alpha$ must belong to one of the two exceptional orbits, and be mapped by $\pi$ to the other exceptional $\mcA_n$-orbit). Then for every atom $\alpha$ of $\mcA_{n+1}$ there exists at most one singular atom in $\Gamma_n \alpha$; and for every singular atom $\alpha$ of $\mcA_n$ there exist at least two singular atoms of $\mcA_{n+1}$ contained in $\alpha$.
\end{itemize}
Initialize the construction by setting $\pi(\alpha_0)=\beta_0$, $\sigma(\beta_0)=\alpha_0$, and $\pi$ is the identity on the other atoms of $\mcA_0$.

Next, assume that we have extended $\pi$ to an automorphism of $\tilde \mcA_n$ satisfying the various conditions above. In particular, on all nonsingular atoms of $\mcA_n$, we have already declared $\pi$ to be equal to some element of $\Gamma_n$, hence the extension of $\pi$ to atoms of $\tilde \mcA_{n+1}$ contained in a nonsingular atom is already defined. Since each $\mcA_n$-orbit is $(\alpha,\pi(\alpha))$-balanced for every singular atom of $\mcA_n$, we can also extend $\pi$ to all nonexceptional $\mcA_{n+1}$-orbits so that it coincides with an element of $\Gamma_{n+1}$ on these orbits.

It remains to explain how to extend $\pi$ to the two exceptional orbits $O_{n+1}(\alpha_{n+1})$, $O_{n+1}(\beta_{n+1})$. Let $\tau_1,\ldots,\tau_p$ be singular atoms of $\mcA_n$ which belong to distinct $\pi$-orbits and such that for any singular atom $\alpha$ of $\mcA_n$ there exists $i$ such that $\alpha=\tau_i$ or $\alpha= \pi(\tau_i)$. For all $i$, we set aside two distinct atoms $\theta^i_1$, $\theta^i_2$ of $O_{n+1}(\alpha_{n+1})$ contained in $\tau_i$, and two distinct atoms $\delta^i_1$, $\delta^i_2$ of $O_{n+1}(\beta_{n+1})$ contained in $\pi(\tau_i)$. We do this while ensuring that no two of these atoms belong to the same $\Gamma_n$-orbit (which is possible since there are many copies of $O_n(\alpha_{n})$ in $O_{n+1}(\alpha_{n+1})$, and many copies of $O_n(\beta_{n})$ in $O_{n+1}(\beta_{n+1})$). Next we set $\pi(\theta^i_j)=\delta^i_j$, $\pi(\delta^i_j)=\theta^i_j$.

For any $i \in \{1,\ldots,p\}$, there remain as many atoms of $O_{n+1}(\alpha_{n+1})$ contained in $\tau_i$ on which $\pi$ is yet to be defined as there are such atoms in $\pi(\tau_i)$; so we can extend $\pi$ so that it coincides with an element of $\Gamma_{n+1}$ on those atoms. We do the same in $O_{n+1}(\beta_{n+1})$ to finish extending $\pi$ to an involutive automorphism of $\tilde \mcA_{n+1}$.

Say that $x \in X$ is \emph{singular} if the atom $\alpha_n(x)$ of $\mcA_n$ which contains $x$ is singular for all $n$. Since any singular atom of $\mcA_n$ contains at least two singular atoms of $\mcA_{n+1}$, we see that the set $S$ of singular points does not have any isolated point. Also, for every singular point $x$, $\alpha_{n+1}(x)$ is the unique singular atom contained in $\Gamma_n \alpha_{n+1}(x)$, which means that $x$ is the unique singular point in $\Gamma_n x$. Thus $S$ is $\Gamma$-sparse (clearly $S$ is closed since it is an intersection of clopen sets).

Let $\Lambda$ be the full group generated by $\Gamma$ and $\pi$. It is an ample group since $(\tilde \mcA_n,\Lambda_n)$ is a sequence of finite unit systems and $\Lambda=\bigcup_n \Lambda_n$. $R_\Lambda$ is obtained from $R_\Gamma$ by gluing together the $\Gamma$-orbits of $x$ and $\pi(x)$ for all $x \in S$; in other words, it is the finest equivalence relation coarser than $R_\Gamma$ and such that $x,\pi(x)$ are equivalent for all $x \in S$.

By construction, the actions of $\Lambda$ and $\Gamma$ have the same orbits when they act on $\Clopen(X)$. Hence $\Lambda$ and $\Gamma$ are orbit equivalent by Krieger's theorem.

We can find a closed subset $S_1$ of $S$ such that $S=S_1 \sqcup \pi(S_1)$.
Let $h \colon S_1 \to K_1$ be any homeomorphism (both $K_1$ and $S_1$ are Cantor sets), and extend $h$ to a homeomorphism from $S$ to $K$ by setting $h(\pi(x)= \sigma(h(x))$ for all $x \in S_1$.

By Theorem \ref{t:Krieger_pointed}, there exists $g \in \Homeo(X)$ such that $g\Gamma g^{-1}= \Gamma$, and $g_{|S}= h$. Then $\Sigma= g \Lambda g^{-1}$ is an ample group which induces $R_{\Gamma,K}$. This ample group is conjugate to $\Lambda$, hence orbit equivalent to $\Gamma$.
\end{proof}

\subsection{Proof of the classification theorem for minimal ample groups}
We are now ready to prove the key result leading to the classification of minimal $\Z$-actions on the Cantor space.

\begin{theorem}[The classification theorem for minimal ample groups]\label{t:minimal_ample}
Let $X$ be the Cantor space. Given two ample subgroups of $\Homeo(X)$ acting minimally, the following conditions are equivalent.
\begin{itemize}
\item[•] The actions of $\Gamma$ and $\Lambda$ are orbit equivalent.
\item[•] There exists a homeomorphism $g$ of $X$ such that $g_*M(\Gamma)=M(\Lambda)$.
\end{itemize}
\end{theorem}

We fix an ample group $\Gamma$ acting minimally on $X$, and reuse the same notations as in the previous section.

We saw earlier (cf.~Theorem \ref{t:Krieger_gives_saturated_GPS}) that Krieger's theorem gives a proof of the classification theorem for saturated minimal ample groups. Thus our work consists in proving that $\Gamma$ is orbit equivalent to a saturated, ample subgroup $\Lambda$ of $\Homeo(X)$. 

The idea of the proof is as follows: using ideas similar to those of the previous section (and Proposition \ref{l:almost_equivalent_partitions}) we build a Cantor set $K$, and a homeomorphic involution $\pi$ such that $K \cap \pi(K)= \emptyset$, $K \cup \pi(K)$ is $\Gamma$-sparse, and the equivalence relation induced by gluing together the $\Gamma$-orbits of $x$ and $\pi(x)$ for every $x \in K$ is induced by an action of a saturated minimal ample group $\Lambda$. The absorption theorem \ref{t:absorption} yields that $\Gamma$ is orbit equivalent to $\Lambda$, from which we obtain as desired that $\Gamma$ is orbit equivalent to a saturated minimal ample group.

% ... Try to give some intuition of the construction ...

% PROPOSITION : The idea is to run two constructions simultaneously. The first one aims to extent our ample group $\Gamma$ into a saturated one $\Lambda$, joining together pairs of exceptional orbits. The second one leads to an intermediary ample group $\tilde{\Gamma}$ between those two : it induces the same relation as $\Gamma$ on clopens, thus is conjugate to it (and \emph{a fortiori} $R_\Gamma$ and $R_{\tilde{\Gamma}}$ are orbit equivalent), but it does the same thing as $\Lambda$ on "exceptional points" (exceptional orbits are glued together by $\Lambda$, orbits of points that are always in exceptional orbits are glued with others), so that they induce the same relation ($R_\Lambda=R_{\tilde{\Gamma}}$).

% Hmm moyennement convaincu par la fin de ma proposition... fait de plus un peu doublon avec l'introduction. Parler plus précisément des $\tilde \alpha_i^n$ ? 

We now begin the proof. Let $(U_n,V_n)$ be an enumeration of all pairs of $\sim_\Gamma^*$-equivalent clopens, and assume for notational simplicity that $U_0$, $V_0$ are disjoint and $U_0 \not \sim_\Gamma V_0$.

\begin{lemma}\label{l:main_construction}
We may build a sequence of $\sim_\Gamma$-partitions $(\mcA_n)$, with distinguished orbit pairs $O(\alpha_1^n),\ldots,O(\alpha_{k_n}^n), O(\beta_1^n), \ldots O(\beta_{k_n}^n)$ $(k_n \ge 1$ for all $n$) satisfying the following conditions. 

\begin{enumerate}
\item $k_0=1$, $\alpha_1^0=U_0$, $\beta_1^0=V_0$ and $\mcA_0 = \{\alpha_1^0,\beta_1^0,X \setminus (\alpha_1^0 \cup \beta_1^0)\}$ (three orbits of cardinality $1$).

\noindent For all $n$ one has: 
\item $\mcA_{n+1}$ refines $\mcA_n$.
\item If $U_n \sim_\Gamma V_n$ then $U_n$ and $V_n$ are $\mcA_n$-equivalent.
\item The tuples  $(\alpha_1^n,\ldots,\alpha_{k_n}^n, U_{n+1})$ and $(\beta_1^n,\ldots,\beta_{k_n}^n,V_{n+1})$ are almost $\mcA_{n+1}$-equivalent, as witnessed by the exceptional orbits 
$$O(\alpha_1^{n+1}),\ldots,O(\alpha_{k_{n+1}}^{n+1}) \ , \ O(\beta_1^{n+1}),\ldots,O(\beta_{k_{n+1}}^{n+1})$$
\item For all $i,j$ $\alpha_i^j \not \sim_\Gamma \beta_i^j$.
\item Let $h_n$ be the number of atoms of $\mcA_n$; denote 
\begin{eqnarray*}
N^n_i&=& \max\{|n_O(\alpha^n_i) - n_O(\beta^n_i)| \colon O \text{ is a } \mcA_{n+1}-\text{orbit} \} \quad (i \le k_n) \\
N^{(n)} &=& \sum_{i=1}^{k_n} N^n_i
\end{eqnarray*}
Then every exceptional $\mcA_{n+1}$-orbit contains more than $3h_n N^{(n)}$ copies of every $\mcA_n$-orbit.
\end{enumerate}
\end{lemma}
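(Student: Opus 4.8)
The plan is to build the sequence $(\mcA_n)$ inductively, at each stage $n \to n+1$ invoking Proposition~\ref{l:almost_equivalent_partitions} applied to a suitable tuple of clopens, and then doing some bookkeeping to arrange conditions (5) and (6). The base case $n=0$ is handled by fiat: we set $\mcA_0$ to be the three-orbit partition prescribed in (1), which makes sense precisely because we arranged $U_0 \not\sim_\Gamma V_0$ (so the two singleton orbits $\{U_0\}$ and $\{V_0\}$ are genuinely distinct from each other and from the rest), and conditions (5) and (6) are vacuous or easy at $n=0$.

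For the inductive step, suppose $\mcA_n$ with its distinguished orbit pairs $O(\alpha_i^n), O(\beta_i^n)$ ($i \le k_n$) has been constructed. First, if $U_{n+1} \sim_\Gamma V_{n+1}$, we will want condition (3) at the next stage, so we should ensure the partition we build is compatible with $U_{n+1}, V_{n+1}$ and refine further (using Lemma~\ref{l:refinement}) so that $U_{n+1}$ and $V_{n+1}$ become $\mcA$-equivalent; since $\sim_\Gamma$-equivalence is exactly $\mcA$-equivalence for a fine enough partition, this is automatic. Now apply Proposition~\ref{l:almost_equivalent_partitions} to the partition $\mcA_n$ and the two tuples $\overline{U} = (\alpha_1^n, \ldots, \alpha_{k_n}^n, U_{n+1})$, $\overline{V} = (\beta_1^n, \ldots, \beta_{k_n}^n, V_{n+1})$. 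This is legitimate because $\alpha_i^n \sim_\Gamma^* \beta_i^n$ (the pairs $(O(\alpha_i^n), O(\beta_i^n))$ were $(U_j,V_j)$-balanced, and in particular $\mu(\alpha_i^n) = \mu(\beta_i^n)$ for all $\mu \in M(\Gamma)$, which gives $\alpha_i^n \sim_\Gamma^* \beta_i^n$) and $U_{n+1} \sim_\Gamma^* V_{n+1}$ by choice of the enumeration. The Proposition yields a refining $\sim_\Gamma$-partition $\mcB$ such that $\overline{U}, \overline{V}$ are almost $\mcB$-equivalent, with exceptional orbits which we name $O(\alpha_i^{n+1}), O(\beta_i^{n+1})$ for $i \le k_{n+1}$ (where $k_{n+1} \le k_n + 1$, hence $\ge 1$ since we can always keep at least one exceptional pair around — if $k_{n+1}$ came out $0$ we split off a small fragment of an orbit to create one, exactly as at the start of the proof of Lemma~\ref{l:inductivestep}). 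The "additionally" clause of Proposition~\ref{l:almost_equivalent_partitions} delivers condition (6) directly: every exceptional $\mcB$-orbit contains more than $3 h_n N(\mcB)$ copies of every $\mcA_n$-orbit, where $N(\mcB) = \sum_i N_i(\mcB)$ matches the quantity $N$ in condition (6). Set $\mcA_{n+1} = \mcB$.

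The one condition that does not come for free is (5): we need $\alpha_i^j \not\sim_\Gamma \beta_i^j$ for the newly produced exceptional orbits. The key observation is that if two clopens $C, D$ satisfy $C \sim_\Gamma^* D$ and $C \sim_\Gamma D$, then a balanced pair involving them is not really "exceptional" — one can absorb it into the non-exceptional part. Concretely, if some produced exceptional pair $(O(\alpha_i^{n+1}), O(\beta_i^{n+1}))$ happened to have $\alpha_i^{n+1} \sim_\Gamma \beta_i^{n+1}$, then since that pair is $(U_j, V_j)$-balanced for every $j$ in the tuple, we can use Lemma~\ref{l:Krieger_unit_systems} to find $\gamma \in \Gamma$ with $\gamma(\alpha_i^{n+1}) = \beta_i^{n+1}$, cut and restack $O(\alpha_i^{n+1})$ and $O(\beta_i^{n+1})$ into a single orbit (this is the same "joining" move used throughout Section~\ref{s:balanced}), and check that all the balancing relations — both for the non-exceptional orbits and for the remaining exceptional pairs — are preserved, so the resulting partition still witnesses almost $\mcB$-equivalence but with one fewer exceptional pair, none of which is of the forbidden form. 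Iterating removes all bad pairs. (The already-present pairs $\alpha_i^n \not\sim_\Gamma \beta_i^n$ for $j \le n$ are untouched because refinement does not change the $\sim_\Gamma$-class of a clopen.) I expect this verification — that the joining move preserves almost-$\mcB$-equivalence and all the numerical conditions while strictly decreasing the number of "bad" exceptional pairs — to be the main, if routine, obstacle; it is entirely in the spirit of the cutting-and-stacking manipulations already carried out in the proof of Proposition~\ref{l:almost_equivalent_partitions}, so it should go through without new ideas.
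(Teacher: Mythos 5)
Your proposal follows essentially the same inductive strategy as the paper's proof, and the main mechanism — apply Proposition~\ref{l:almost_equivalent_partitions} to the tuples built from the old exceptional pair atoms together with $(U_{n+1},V_{n+1})$, then use the ``additionally'' clause for condition~(6) and a joining/minimality argument for condition~(5) — matches the paper. There are a few imprecisions worth flagging, none fatal, and one small genuine difference.

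The difference: the paper splits into two cases. When $U_{n+1}\sim_\Gamma V_{n+1}$ it first refines $\mcA_n$ to a partition $\mcB$ making $U_{n+1},V_{n+1}$ $\mcB$-equivalent (so condition (3) will persist under further refinement) and then feeds only $(\alpha_1^n,\ldots,\alpha_{k_n}^n)$, $(\beta_1^n,\ldots,\beta_{k_n}^n)$ into the proposition; when $U_{n+1}\not\sim_\Gamma V_{n+1}$ it appends $(U_{n+1},V_{n+1})$ to the tuples. You unify these by always appending $(U_{n+1},V_{n+1})$. This works, but it works \emph{because} you pre-refined: applying the proposition literally ``to the partition $\mcA_n$'' (as your next sentence says) would produce a $\mcB$ in which $U_{n+1},V_{n+1}$ are merely almost equivalent (each exceptional orbit may fail to be $(U_{n+1},V_{n+1})$-balanced individually), and a further refinement of $\mcB$ to fix this breaks the almost-equivalence data. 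So you must apply the proposition to the refined $\mcB$, not to $\mcA_n$; once you do, $U_{n+1},V_{n+1}$ stay equivalent under refinement and the extra tuple entries contribute zero to the $N_i(\mcB)$'s, so the ``additionally'' bound reduces to the required $3h_nN$.

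Your treatment of condition~(5) is exactly what the paper compresses into ``choose $\mcA_{n+1}$ with a minimal number of exceptional columns'': if some $\alpha_i^{n+1}\sim_\Gamma\beta_i^{n+1}$, joining $O(\alpha_i^{n+1})$ and $O(\beta_i^{n+1})$ (legitimate since $\alpha_i^{n+1}\sim_\Gamma\beta_i^{n+1}$ implies all atoms of the union are pairwise $\sim_\Gamma$-equivalent; no appeal to Lemma~\ref{l:Krieger_unit_systems} needed) preserves the almost-equivalence, refines $\mcA_n$, and can only decrease the $N$ of condition~(6), so the surviving exceptional orbits still satisfy it. One small logical slip: ``$k_{n+1}\le k_n+1$, hence $\ge 1$'' is a non sequitur, and the fallback about splitting off a fragment is unnecessary. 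The correct reason $k_{n+1}\ge 1$ is that $\alpha_1^n\not\sim_\Gamma\beta_1^n$ by the inductive hypothesis (condition~(5)), so the tuples cannot become fully $\mcA_{n+1}$-equivalent and at least one exceptional pair must survive the joining process.
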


\begin{proof}
Assume the construction has been carried out up to some $n$ (the case $n=0$ being dealt with by the first condition above). 

If $U_{n+1} \sim_\Gamma V_{n+1}$, find a $\sim_\Gamma$-partition $\mcB$ refining $\tilde \mcA_n$ and such that $U_{n+1}$ and $V_{n+1}$ are $\mcB$-equivalent; such a partition exists because there exists a $\sim_\Gamma$-partition for which $U_{n+1}$ and $V_{n+1}$ are $\mcB$-equivalent, and any two $\sim_\Gamma$-partitions have a common refinement. Then apply Proposition \ref{l:almost_equivalent_partitions} to this partition and $(\alpha_1^n,\ldots,\alpha_{k_n}^n)$ and $(\beta_1^n,\ldots,\beta_{k_n}^n)$. Choose $\mcA_{n+1}$ with a minimal number of exceptional columns, which ensures that no $\alpha^{n+1}_j$ and $\beta^{n+1}_j$ belong to the same $\Gamma$-orbit.

If $U_{n+1} \not \sim_\Gamma V_{n+1}$, apply Proposition \ref{l:almost_equivalent_partitions} (again, with a minimal number of exceptional columns) to $\mcA_n$, $(\alpha_1^n,\ldots,\alpha_{k_n}^n,U_{n+1})$ and $(\beta_1^n,\ldots,\beta_{k_n}^n,V_{n+1})$. 
\end{proof}

We obtain a sequence of $\sim_\Gamma^*$-partitions $(\mcB_n)$ by joining together the $\mcA_n$-orbits of each $\alpha_i^n$ and $\beta_i^n$ (i.e.~every pair of exceptional orbits of $\mcA_n$ are joined together so as to form a single $\mcB_n$-orbit consisting of two $\mcA_n$-orbits; the other orbits are unchanged). The construction ensures that, for all $n$:
\begin{itemize}
\item $\mcB_{n+1}$ refines $\mcB_n$. Indeed, for all $n$ and $i \le k_n$, $\alpha_i^n$ and $\beta_i^n$ are $\mcB_{n+1}$-equivalent (since they are almost $\mcA_{n+1}$-equivalent, and we have joined together each exceptional pair of orbits of $\mcA_{n+1}$ to form a single $\mcB_{n+1}$-orbit). And any two atoms belonging to the same non-exceptional $\mcA_n$-orbit are $\mcA_{n+1}$-equivalent, hence $\mcB_{n+1}$-equivalent.
\item For any $n$ $U_n$ and $V_n$ are $\mcB_n$-equivalent: If $U_n \sim_\Gamma V_n$ then $U_n$ and $V_n$ are already $\mcA_n$-equivalent and $\mcB_n$ refines $\mcA_n$; if $U_n \not \sim_\Gamma V_n$ then stacking the exceptional $\mcA_n$-orbits together makes them $\mcB_n$-equivalent. It follows that $U_n$, $V_n$ are $\mcB_m$-equivalent for all $m \ge n$.
\end{itemize}

\begin{lemma}
We may build:
\begin{itemize}
    \item An ample group $\tilde \Gamma = \bigcup_n \tilde \Gamma_n$ such that $(\mcA_n,\tilde \Gamma_n)$ is a unit system and the $\tilde \Gamma_n$-orbit of any atom $\alpha$ of $\mcA_n$ is equal to $\tilde \Gamma_n \alpha$.
    \item An involution $\pi$ such that for all $n$ $\pi$ induces an automorphism of $\mcB_n$, $\pi(\alpha_i^n)=\beta_i^n$ for all $n$ and all $i \in \{1,\ldots,k_n\}$, and $\pi$ is trivial outside of $\alpha_ 1^0 \sqcup \beta_1^0$. 
\end{itemize}
Denoting by $\Lambda_n$ the subgroup of $\Aut(\mcB_n)$ generated by $\tilde \Gamma_n$ and $\pi$, we ensure that $(\mcB_n,\Lambda_n)$ is a unit system and the $\Lambda_n$-orbit of any atom of $\mcB_n$ coincides with its $\mcB_n$-orbit.

We say that atoms of $\tilde \Gamma_{n}$ on which $\pi$ does not coincide with an element of $\tilde \Gamma_n$ are \emph{singular}; for every such atom $\alpha$ $\pi(\alpha)$ does not belong to $\tilde \Gamma_n \alpha$.

We also ensure that the following conditions hold:
\begin{enumerate}
    \myitem[($\ast$)]\label{sparse} For every atom $\alpha$ of $\mcA_{n+1}$ there exists at most one singular atom of $\mcA_{n+1}$ contained in $\tilde \Gamma_n \alpha$.
    \myitem[($\ast \ast$)]\label{no_isolated_points} For every singular atom $\alpha$ of $\mcA_n$, there are at least two singular atoms of $\mcA_{n+1}$ contained in $\alpha$.
\end{enumerate}
\end{lemma}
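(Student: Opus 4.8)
The construction is a simultaneous induction that weaves together two objects: the ample group $\tilde\Gamma$ (built via Lemma \ref{l:from_partitions_to_an_ample_group} applied to the sequence $(\mcA_n)$ and the relation $\sim_\Gamma$) and the involution $\pi$ (built via Lemma \ref{l:from_partitions_to_an_ample_group} applied to $(\mcB_n)$ and $\sim_\Gamma^*$, but now exploiting the freedom in extending permutations that was flagged at the end of that lemma's proof). The key point is that $\pi$ must be defined \emph{compatibly} with the already-chosen $\tilde\Gamma_n$: on non-exceptional orbits $\pi$ should coincide with an element of $\tilde\Gamma_n$, and the only ``singular'' behaviour is the swap of the exceptional orbit pairs. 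The two side conditions \ref{sparse} and \ref{no_isolated_points} are exactly what one needs so that the resulting set $S$ of singular points is $\Gamma$-sparse and perfect, which is what Theorem \ref{t:absorption} will then consume.

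\begin{proof}
We first apply Lemma \ref{l:from_partitions_to_an_ample_group} to the refining sequence $(\mcA_n)$ and the full relation $\sim_\Gamma$ to obtain $\tilde\Gamma=\bigcup_n\tilde\Gamma_n$ with $(\mcA_n,\tilde\Gamma_n)$ a unit system and $\tilde\Gamma_n\alpha$ equal to the $\mcA_n$-orbit of $\alpha$; we keep track of the enumerations of sub-atoms used to extend transpositions, as we will need that freedom in a moment.

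We construct $\pi$ by induction on $n$, so that at stage $n$ we have an involutive automorphism of $\mcB_n$ with $\pi(\alpha_i^n)=\beta_i^n$ for all $i\le k_n$, equal to the identity outside $\alpha_1^0\sqcup\beta_1^0$, such that on every non-exceptional $\mcA_n$-orbit $\pi$ agrees with some element of $\tilde\Gamma_n$, and such that $(\mcB_n,\Lambda_n)$ is a unit system with $\Lambda_n$-orbits equal to $\mcB_n$-orbits (this last point is automatic once $\pi$ is an automorphism of $\mcB_n$ swapping the two $\mcA_n$-orbits inside each exceptional $\mcB_n$-orbit and acting inside $\tilde\Gamma_n$ elsewhere). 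We initialise with $n=0$ by setting $\pi(\alpha_1^0)=\beta_1^0$, $\pi(\beta_1^0)=\alpha_1^0$ and $\pi=\mathrm{id}$ on the remaining atom; the unique singular $\mcA_0$-atoms are $\alpha_1^0$ and $\beta_1^0$.

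For the inductive step, suppose $\pi$ is defined on $\mcB_n$. On every non-exceptional $\mcA_n$-orbit, $\pi$ already equals some $\delta\in\tilde\Gamma_n\subseteq\tilde\Gamma_{n+1}$, so its extension to the atoms of $\mcA_{n+1}$ contained there is forced; using that the relevant orbits of $\mcA_{n+1}$ are $(\alpha,\pi(\alpha))$-balanced (which is guaranteed by condition (4) of Lemma \ref{l:main_construction}, the almost-$\mcA_{n+1}$-equivalence of the exceptional tuples), we may extend $\pi$ over all non-exceptional $\mcA_{n+1}$-orbits so that it coincides with an element of $\tilde\Gamma_{n+1}$ there. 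It remains to extend $\pi$ over the two exceptional orbits $O(\alpha_j^{n+1})$, $O(\beta_j^{n+1})$ for $j\le k_{n+1}$, which are $\mcB_{n+1}$-glued in pairs. Let $\tau_1,\dots,\tau_p$ be representatives of the $\pi$-orbits of singular $\mcA_n$-atoms; for each $\tau_s$ we must ensure at least two singular $\mcA_{n+1}$-atoms lie inside it (condition \ref{no_isolated_points}) while no two newly created singular $\mcA_{n+1}$-atoms lie in the same $\tilde\Gamma_n$-orbit (condition \ref{sparse}). This is where the ``additionally'' clause of Proposition \ref{l:almost_equivalent_partitions}, encoded as condition (6) of Lemma \ref{l:main_construction}, is used: each exceptional $\mcA_{n+1}$-orbit contains more than $3h_nN$ copies of every $\mcA_n$-orbit, so inside $\tau_s$ (which contains $n_{O}(\tau_s)\ge 1$ copies of the appropriate $\mcA_n$-orbit fragment) there is plenty of room to set aside, for each $s$, two atoms $\theta_1^s,\theta_2^s$ of $O(\alpha_j^{n+1})$ lying in $\tau_s$ and two atoms $\delta_1^s,\delta_2^s$ of $O(\beta_j^{n+1})$ lying in $\pi(\tau_s)$, all in pairwise distinct $\tilde\Gamma_n$-orbits; we declare $\pi(\theta_i^s)=\delta_i^s$. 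After this, inside each $\tau_s$ (resp.\ $\pi(\tau_s)$) equally many atoms of the exceptional orbit remain undefined, so — choosing the enumerations in the construction of $\tilde\Gamma_{n+1}$ appropriately — we extend $\pi$ on those atoms to coincide with an element of $\tilde\Gamma_{n+1}$. This defines $\pi$ as an involutive automorphism of $\mcB_{n+1}$; it swaps the two $\mcA_{n+1}$-orbits inside each exceptional $\mcB_{n+1}$-orbit and acts in $\tilde\Gamma_{n+1}$ elsewhere, so $(\mcB_{n+1},\Lambda_{n+1})$ is a unit system with the desired orbits, and conditions \ref{sparse} and \ref{no_isolated_points} hold by construction. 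Taking $\pi=\bigcup_n\pi_n$ (which makes sense since every clopen eventually belongs to some $\mcB_n$) completes the construction.
\end{proof}

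The main obstacle is the bookkeeping in the inductive step: one has to keep $\pi$ an \emph{involution} while respecting the already-fixed action of $\tilde\Gamma_{n+1}$ on non-exceptional atoms, and simultaneously arrange the singular atoms to satisfy the two opposing constraints \ref{sparse} (at most one singular atom per $\tilde\Gamma_n$-orbit, so that $S$ will be $\Gamma$-sparse) and \ref{no_isolated_points} (at least two singular sub-atoms, so that $S$ will be perfect). Everything is made to work by the quantitative ``many copies'' guarantee of Proposition \ref{l:almost_equivalent_partitions} carried through condition (6) of Lemma \ref{l:main_construction}; without it there might not be enough distinct $\tilde\Gamma_n$-orbits available among the sub-atoms of a given singular atom.
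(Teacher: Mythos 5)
Your proof follows the same inductive scaffold as the paper (the ample group from Lemma \ref{l:from_partitions_to_an_ample_group}, $\pi$ built level by level, extending on non-exceptional orbits by balancedness, and using condition (6) of Lemma \ref{l:main_construction} to find enough distinct $\tilde\Gamma_n$-orbits). But the heart of the step — extending $\pi$ on the exceptional $\mcA_{n+1}$-orbits — has a genuine gap in the counting.

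You prescribe a fixed budget of two atoms of $O(\alpha_j^{n+1})$ inside $\tau_s$ and two of $O(\beta_j^{n+1})$ inside $\pi(\tau_s)$, swap them under $\pi$, and then assert that ``inside each $\tau_s$ (resp.\ $\pi(\tau_s)$) equally many atoms of the exceptional orbit remain undefined.'' That is false in general. Write $m_O(s)=n_O(\tau_s)-n_O(\pi(\tau_s))$ for an exceptional orbit $O$. After setting aside $2$ atoms of $O$ inside $\tau_s$ and none inside $\pi(\tau_s)$, the remaining counts are $n_O(\tau_s)-2$ and $n_O(\pi(\tau_s))$; these are equal only when $m_O(s)=2$. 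When $m_O(s)=0$ (an $(\tau_s,\pi(\tau_s))$-balanced exceptional orbit, which generically happens for all but a few of them) your prescription leaves an imbalance of $2$, and one cannot finish by an element of $\tilde\Gamma_{n+1}$ — that element would have to preserve the $\mcA_{n+1}$-orbit $O$ and carry the undefined atoms in $\tau_s$ bijectively to those in $\pi(\tau_s)$, which is impossible if the counts differ. So the involution you try to build simply does not close up.

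The correct bookkeeping, as in the paper, is to make the number of set-aside atoms depend on $m_O(s)$: set aside exactly $|m_O(s)|$ atoms of $O$ on the excess side and $|m_O(s)|$ atoms of $O'$ on the opposite side (recall $m_{O'}(s)=-m_O(s)$ because $(O,O')$ is $(\tau_s,\pi(\tau_s))$-balanced), so that the remaining undefined atoms really are balanced. This alone would not guarantee condition $(\ast\ast)$ when $|m_O(s)|=1$, which is why that case gets special treatment (three atoms on each side: two on the excess side, one on the deficit side). Condition $(\ast\ast)$ is then saved by observing that since $\tau_s\not\sim_\Gamma\pi(\tau_s)$, some exceptional orbit must have $m_O(s)\neq 0$. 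Your fixed ``two atoms'' rule collapses this case analysis and breaks the count.
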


\begin{proof}
The construction proceeds as follows: first, we define $\tilde \Gamma_0$, which is the trivial group. Then we define an involution $\pi \in \Aut(\mcB_0)$ by setting $\pi(\alpha^1_0)=\beta^1_0$, $\pi(\beta^1_0)=\alpha^1_0$, and $\pi$ is the identity on the other atom. Our desired conditions are satisfied for $n=0$. 

Now assume that we are at step $n$ of our construction, i.e.~we have built $\tilde \Gamma_n$ and an automorphism $\pi$ of $\mcB_n$ satisfying our conditions. First, we extend $\tilde \Gamma_n$ to $\tilde \Gamma_{n+1}$ as in the proof of Lemma \ref{l:from_partitions_to_an_ample_group}, which is possible since $\mcA_{n+1}$ refines $\mcA_n$.
On atoms of $\mcB_{n+1}$ contained in an atom of $\mcA_n$ which is not singular, we have no choice for the extension of $\pi$: it must coincide with the extension of an element of $\tilde \Gamma_n$ to an automorphism of $\Aut(\mcB_{n+1})$, and that extension has already been defined. 

Let $\alpha$ be a singular atom of $\mcA_n$ and $\beta$ be an atom of $\mcA_{n+1}$ contained in $\alpha$. If the $\mcA_{n+1}$-orbit $O(\beta)$ is not exceptional, then it is $(\alpha,\pi(\alpha))$-balanced, so we may find an involution $\gamma \in \tilde \Gamma_{n+1}$ such that $\gamma(\delta) \subset \pi(\alpha)$ for all atoms $\delta$ of $O(\beta)$ contained in $\alpha$. Declare $\pi$ to be equal to $\gamma$ on those atoms.

So the real work consists of extending $\pi$ to the atoms of the exceptional $\mcA_{n+1}$-orbits contained in some singular atom of $\mcA_n$. Let $\tau_1,\ldots,\tau_p$ be singular atoms of $\mcA_n$ belonging to distinct $\pi$-orbits and such that for every singular atom $\alpha$ of $\mcA_n$ there exists $i$ such that $\alpha=\tau_i$ or $\alpha=\pi(\tau_i)$ (we use below the fact that $p \le h_n$, since there are fewer singular atoms than there are atoms in $\mcA_n$). Let $(O,O')$ be an exceptional pair of $\mcA_{n+1}$-orbits. Note that $(O,O')$ is a $(\tau_i,\pi(\tau_i))$-balanced pair of orbits for all $i$. 

Denote for all $i$ $m_O(i)= n_O(\tau_i)-n_O(\pi(\tau_i))$. We need to do something to balance the columns for which $|m_O(i)| \ge 1$; we distinguish two cases.

\begin{itemize}
    \item If $ m_O(i)=1$, choose two atoms $\theta_1(i)$, $\theta_2(i)$ of $O$ contained in $\tau_i$, and one atom $\theta_3(i)$ of $O$ contained in $\pi(\tau_i)$; as well as two atoms $\delta_1(i)$, $\delta_2(i)$ of $O'$ contained in $\pi(\tau_1)$, and an atom $\delta_3(i)$ of $O'$ contained in $\tau_i$. If $m_O(i)=-1$, do the same thing but with $\theta_1(i)$, $\theta_2(i)$ in $\pi(\tau_i)$ and $\theta_3(i)$ in $\tau_i$; and $\delta_1(i)$, $\delta_2(i)$ in $\tau_i$ while $\delta_3(i)$ belongs to $\pi(\tau_i)$.
    \item If $m_O(i) \ge 2$, we set aside atoms $\theta_1(i),\ldots,\theta_{m_O(i)}(i)$ of $O$ contained in $\tau_i$, and atoms $\delta_1(i),\ldots,\delta_{m_O(i)}(i)$ of $O'$ contained in $\pi(\tau_i)$; similarly, if $m_O(i)\le -2$ we set aside atoms $\theta_1(i),\ldots,\theta_{m_O(i)}(i)$ of $O$ contained in $\pi(\tau_i)$, and atoms $\delta_1(i),\ldots,\delta_{m_O(i)}(i)$ of $O'$ contained in $\tau_i$.
\end{itemize}

 Overall, this involves choosing fewer than $3 h_n N$ atoms in each of $O$, $O'$, so we can additionally ensure that no two of these atoms belong to the same $\tilde \Gamma_n$-orbit. We then set $\pi(\delta_j(i))= \theta_j(i)$, $\pi(\theta_j(i))= \delta_j(i)$ for all $i,j$.

We do this for all exceptional orbit pairs $(O,O')$ and all $i$. Now, in each exceptional orbit $O$ of $\mcA_{n+1}$, and for any $i$, there remain as many atoms of $O$ contained in $\tau_i$ on which $\pi$ is yet to be extended as there are such atoms contained in $\pi(\tau_i)$. This means that we can extend $\pi$ so that it coincides with an element of $\tilde \Gamma_{n+1}$ on those atoms.

This enables us to move on to the next step. Condition \ref{sparse} has been guaranteed when we chose our new singular atoms in distinct $\tilde \Gamma_n$-orbits. To see why condition \ref{no_isolated_points} holds, let $\alpha$ be a singular atom of $\mcA_n$. Since $\pi(\alpha) \not \sim_\Gamma \alpha$, there must be some exceptional column of $\mcA_{n+1}$ which is not $(\alpha,\pi(\alpha))$-balanced; and we took care to include more than two singular atoms contained in $\alpha$ in such a column (this is why we singled out the case $|m_O(i)|=1$ above).
\end{proof}

We let $\tilde \Gamma= \bigcup_n \tilde \Gamma_n$, $\Lambda= \bigcup_n \Lambda_n$. They are both ample groups, and $\Lambda$ is the full group generated by $\tilde \Gamma$ and $\pi$.

\begin{lemma}\label{l:orbits_of_Lambda_on_clopens}
For any $U,V \in \Clopen(X)$ such that $U \sim_\Gamma^* V$, there exists $\lambda \in \Lambda$ such that $\lambda(U)=V$.
\end{lemma}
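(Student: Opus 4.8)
\emph{Proof plan.} The plan is to locate the index $n$ at which the pair $(U,V)$ appears in the enumeration $(U_n,V_n)$ of all $\sim_\Gamma^*$-equivalent pairs of clopen sets, and to produce the required element inside the \emph{finite} group $\Lambda_n$. For this $n$ I first record two facts. Since $\mcB_n$ is obtained from $\mcA_n$ merely by joining $\mcA_n$-orbits together, the two partitions have exactly the same atoms; and $\mcA_n$ is compatible with $U_n$ and $V_n$ --- directly from condition~(1) of Lemma~\ref{l:main_construction} when $n=0$, and from condition~(4) applied at stage $n-1$ when $n\ge 1$ (the notion ``almost $\mcA_n$-equivalent'' presupposes compatibility of $\mcA_n$ with the clopens involved). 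Hence $\mcB_n$ is compatible with $U$ and $V$. Second, as was observed just before the statement, $U_n$ and $V_n$ are $\mcB_n$-equivalent, i.e.\ $n_O(U)=n_O(V)$ for every $\mcB_n$-orbit $O$.

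The second step is to upgrade the recorded property ``the $\Lambda_n$-orbit of any atom of $\mcB_n$ coincides with its $\mcB_n$-orbit'' --- which is only transitivity on each $\mcB_n$-orbit --- to the statement that $\Lambda_n$ realises \emph{every} permutation $\rho$ of the atoms of $\mcB_n$ preserving each $\mcB_n$-orbit. Given such a $\rho$, choose for each atom $\alpha$ some $\gamma_\alpha\in\Lambda_n$ with $\gamma_\alpha(\alpha)=\rho(\alpha)$ (possible by transitivity, as $\rho(\alpha)$ lies in the $\mcB_n$-orbit of $\alpha$), and let $g$ be the map agreeing with $\gamma_\alpha$ on $\alpha$ for every atom $\alpha$ of $\mcB_n$. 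Its restrictions to the clopen atoms are homeomorphisms, and both their domains and their images partition $X$, so $g\in\Homeo(X)$; moreover $g$ induces the automorphism $\rho$ of $\mcB_n$ and coincides on each atom with an element of $\Lambda_n$. The third axiom in the definition of a unit system then gives $g\in\Lambda_n$.

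The last step is combinatorial. For each $\mcB_n$-orbit $O$, every atom of $O$ is either contained in $U$ or disjoint from $U$ (as $U$ is a union of atoms of $\mcB_n$), so the equality $n_O(U)=n_O(V)$ also forces equality of the numbers of atoms of $O$ disjoint from $U$ and from $V$; hence one may pick a permutation $\rho_O$ of the atoms of $O$ carrying $\{\alpha\in O\colon\alpha\subseteq U\}$ onto $\{\alpha\in O\colon\alpha\subseteq V\}$. Glueing the $\rho_O$ over all $\mcB_n$-orbits yields an orbit-preserving permutation $\rho$ of all atoms of $\mcB_n$ with $\rho(\{\alpha\colon\alpha\subseteq U\})=\{\alpha\colon\alpha\subseteq V\}$. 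Applying the second step to $\rho$ produces $g\in\Lambda_n\subseteq\Lambda$ with $g(U)=\bigsqcup_{\alpha\subseteq U}g(\alpha)=\bigsqcup_{\alpha\subseteq U}\rho(\alpha)=V$, so $\lambda:=g$ does the job.

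The only point needing a little care is the second step --- the passage, via the third unit-system axiom, from transitivity of $\Lambda_n$ on a $\mcB_n$-orbit to the realisation of arbitrary orbit-preserving permutations; everything else is bookkeeping about the construction of Lemma~\ref{l:main_construction} and the two properties of $\Lambda_n$ recorded just before the statement.
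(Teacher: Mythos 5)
Your proposal is correct and follows essentially the same route as the paper: locate $n$ with $(U,V)=(U_n,V_n)$, invoke the fact (recorded just before the lemma) that $U_n$ and $V_n$ are $\mcB_n$-equivalent, and conclude that some $\lambda\in\Lambda_n$ carries $U$ to $V$. The paper leaves the final implication --- from $\mcB_n$-equivalence to the existence of $\lambda$ --- implicit; you correctly fill it in by constructing an orbit-preserving permutation $\rho$ of the atoms and using the transitivity of the $\Lambda_n$-action on each $\mcB_n$-orbit together with the third unit-system axiom to realise $\rho$ inside $\Lambda_n$.
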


\begin{proof}
First, note that for any clopen $U,V$ such that $U \sim_\Gamma V$, there exists some $n$ such that $(U,V)=(U_n,V_n)$, so that $(U,V)$ are $\mcA_n$-equivalent, hence also $\mcB_n$-equivalent, so there exists $\lambda \in \Lambda_n$ such that $\lambda(U)=V$. So $U \sim_\Lambda V$.

If $U \not \sim_\Gamma V$ but $U \sim_\Gamma^* V$, there exists $n$ such that $(U,V)=(U_n,V_n)$, so $(U, V)$ are $\mcB_n$-equivalent and $U \sim_\Lambda V$. 
\end{proof}

\begin{lemma}\label{l:Lambda_is_saturated}
$\Lambda$ acts minimally on $X$; $M(\Lambda)=M(\Gamma)$; and $\Lambda$ is a saturated ample group.
\end{lemma}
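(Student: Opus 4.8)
The plan is to reduce everything to the level of the equivalence relations on $\Clopen(X)$, using the fact recalled after Lemma \ref{l:closure_full_group} that for a full group $G \le \Homeo(X)$ the closure $\overline{G}$ depends only on the orbit relation of $G$ on $\Clopen(X)$, namely $\overline{G} = \{g \in \Homeo(X) \colon \forall A \in \Clopen(X)\ \exists h \in G\ g(A)=h(A)\}$. So the real content to nail down first is the identification $\sim_\Lambda\, = \,\sim_\Gamma^*$. One inclusion is exactly Lemma \ref{l:orbits_of_Lambda_on_clopens}. For the other, I would first check that each $\mcB_n$ is a $\sim_\Gamma^*$-partition: the $\mcB_n$-orbits differing from $\mcA_n$-orbits are those obtained by joining an exceptional pair $O(\alpha_i^n),O(\beta_i^n)$, and by construction (Lemma \ref{l:main_construction} via Proposition \ref{l:almost_equivalent_partitions}) these are balanced pairs of orbits, so in particular $\mu(\alpha_i^n)=\mu(\beta_i^n)$ for all $\mu\in M(\Gamma)$, i.e.\ $\alpha_i^n \sim_\Gamma^* \beta_i^n$; since atoms inside one $\mcA_n$-orbit are even $\sim_\Gamma$-equivalent and $\sim_\Gamma$ refines $\sim_\Gamma^*$, all atoms of a $\mcB_n$-orbit are $\sim_\Gamma^*$-equivalent. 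As the $\Lambda_n$-orbit of an atom of $\mcB_n$ is its $\mcB_n$-orbit (and $\sim_\Gamma^*$ is full, as in Lemma \ref{l:from_partitions_to_an_ample_group}), every $\lambda\in\Lambda_n$ sends a union of atoms of $\mcB_n$ to a $\sim_\Gamma^*$-equivalent clopen; since any clopen lies in some $\mcB_n$ and $\Lambda=\bigcup_n\Lambda_n$, we get $\lambda(A)\sim_\Gamma^* A$ for all $\lambda\in\Lambda$, hence $\sim_\Lambda\,=\,\sim_\Gamma^*$. I expect this to be the main (if modest) obstacle: one must make sure the conclusion involves $\sim_\Gamma^*$ rather than $\sim_\Gamma$ — that is, that the exceptional orbits were stacked along balanced pairs — and that none of the sparseness / singular-orbit conditions from the construction are needed here (those are reserved for the later application of the absorption theorem).

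Next I would handle minimality and invariant measures, both of which reduce to the clopen orbit relation. Minimality of an ample group depends only on $\sim$: since $\Gamma$ acts minimally, for any nonempty clopen $U$ compactness yields a finite clopen partition $X=\bigsqcup_i V_i$ with each $V_i$ $\sim_\Gamma$-equivalent to a subclopen of $U$; because $\sim_\Gamma$ refines $\sim_\Gamma^*\,=\,\sim_\Lambda$, the same partition shows the $\Lambda$-orbit of every point meets $U$, so $\Lambda$ acts minimally. For measures, I would use the standard fact that for an ample group $\Delta$ a Borel probability measure is $\Delta$-invariant iff it is constant on $\sim_\Delta$-classes of clopens (the non-obvious direction follows by extending $\mu(A)=\mu(\delta A)$ from the algebra $\Clopen(X)$ to the Borel $\sigma$-algebra by uniqueness of measure extension, as in the proof of Lemma \ref{l:invariant_measures_full_group}). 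Applying this to $\Lambda$ with $\sim_\Lambda\,=\,\sim_\Gamma^*$: $\mu\in M(\Lambda)$ iff $\mu(A)=\mu(B)$ whenever $A\sim_\Gamma^* B$. If $\mu\in M(\Gamma)$ this holds by the definition of $\sim_\Gamma^*$, so $M(\Gamma)\subseteq M(\Lambda)$; conversely such a $\mu$ is a fortiori constant on $\sim_\Gamma$-classes, hence $\mu\in M(\Gamma)$. Thus $M(\Lambda)=M(\Gamma)$.

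Finally, for saturation: $\Lambda$ is now known to be a minimal ample group, so Lemma \ref{l:closure_full_group} gives $\overline{[R_\Lambda]}=\{g\in\Homeo(X)\colon\forall\mu\in M(\Lambda)\ g_*\mu=\mu\}$. On the other hand, by the description of closures of full groups recalled at the start, $g\in\overline{\Lambda}$ iff $g(A)\sim_\Lambda A$ for every clopen $A$, i.e.\ (first step) iff $g(A)\sim_\Gamma^* A$ for all $A$, i.e.\ iff $\mu(g(A))=\mu(A)$ for all clopen $A$ and all $\mu\in M(\Gamma)=M(\Lambda)$, which by the same Borel-extension argument is precisely the condition that $g_*\mu=\mu$ for all $\mu\in M(\Lambda)$. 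Hence $\overline{\Lambda}=\overline{[R_\Lambda]}$, so $\Lambda$ is saturated, completing the proof of the three assertions.
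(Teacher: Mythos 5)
Your proof is correct and follows essentially the same path as the paper's: both rest on the two facts that each $\mcB_n$ is a $\sim_\Gamma^*$-partition (giving $\sim_\Lambda \subseteq \sim_\Gamma^*$, i.e.\ $\Lambda \subseteq G_\Gamma$) and Lemma~\ref{l:orbits_of_Lambda_on_clopens} (giving $\sim_\Gamma^* \subseteq \sim_\Lambda$, i.e.\ $G_\Gamma \subseteq \overline{\Lambda}$), and then on the characterization of closures of full groups from Lemma~\ref{l:closure_full_group}. The only cosmetic difference is that you first isolate the identity $\sim_\Lambda = \sim_\Gamma^*$ and derive everything from it, while the paper phrases the same observations in terms of the group $G_\Gamma$ and applies Lemma~\ref{l:invariant_measures_stabilize} where you invoke the Carath\'eodory extension of a measure from $\Clopen(X)$; both are fine.
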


\begin{proof}
Given any nonempty clopen set $U$, there exists $\gamma_1,\ldots,\gamma_n \in \Gamma$ such that $X= \bigcup_{i=1}^n \gamma_i (U)$, since $\Gamma$ acts minimally. By Lemma \ref{l:orbits_of_Lambda_on_clopens}, for all $i$ there exists some $\lambda_i \in \Lambda$ such that $\lambda_i (U)= \gamma_i(U)$, whence $\Lambda$ also acts minimally. 

Let $G_\Gamma = \lset{g \in \Homeo(X) \colon \forall \mu \in M(\Gamma) \ g_*\mu=\mu}$. By construction, $\Lambda \subset G_\Gamma$ since each $\mcB_n$ is a $\sim_\Gamma^*$-partition (see Lemma \ref{l:from_partitions_to_an_ample_group}).

If $U,V \in \Clopen(X)$ are such that $\mu(U)=\mu(V)$ for all $\mu \in M(\Gamma)$, we know by the previous lemma that there exists $\lambda \in \Lambda$ such that $\lambda(U)=V$, so $\overline{\Lambda}$ contains $G_\Gamma$. Hence $\overline{\Lambda}= G_\Gamma$, thus for any Borel probability measure $\mu$ on $X$ we have
\begin{align*}
\mu \in M(\Lambda) & \Leftrightarrow \mu \in M(\overline{ \Lambda}) \\
                   & \Leftrightarrow \mu \in M(G_\Gamma) \\
                   & \Leftrightarrow  \mu \in M(\Gamma)  \quad \text{(recall that } M(\Gamma) = M(G_\Gamma) \text{ , see Lemma \ref{l:invariant_measures_stabilize})}     
\end{align*}

Thus $\overline{\Lambda}=G_\Gamma= \overline{[R_\Gamma]}$, and  $\overline{[R_\Gamma]}=\overline{[R_\Lambda]}$ since $M(\Lambda)=M(\Gamma)$ (see Lemma \ref{l:closure_full_group}). Hence $\overline{\Lambda}= \overline{[R_\Lambda]}$, i.e.~$\Lambda$ is saturated.
\end{proof}

\begin{lemma}
The orbits of the action of $\tilde \Gamma$ on $\Clopen(X)$ coincide with the orbits of the action of $\Gamma$ on $\Clopen(X)$.
\end{lemma}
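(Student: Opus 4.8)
The plan is to prove the two inclusions between orbits separately; both are short, given what has already been set up. Throughout, recall that the $\Gamma$-orbit of a clopen $A$ is exactly its $\sim_\Gamma$-class, and likewise for $\tilde\Gamma$.

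First I would check that the $\tilde\Gamma$-orbit of any clopen set is contained in its $\Gamma$-orbit. Indeed, $\tilde\Gamma$ was obtained from the sequence of $\sim_\Gamma$-partitions $(\mcA_n)$ exactly by the procedure of Lemma~\ref{l:from_partitions_to_an_ample_group}, applied with $\simeq$ equal to $\sim_\Gamma$. The last clause of that lemma therefore gives, for every clopen $A$ and every $\tilde\gamma\in\tilde\Gamma$, that $A\sim_\Gamma\tilde\gamma(A)$; in other words there is $\gamma\in\Gamma$ with $\gamma(A)=\tilde\gamma(A)$. Hence the $\tilde\Gamma$-orbit of $A$ sits inside the $\Gamma$-orbit of $A$.

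For the reverse inclusion, suppose $U\sim_\Gamma V$. Any $\gamma\in\Gamma$ preserves every measure in $M(\Gamma)$, so $\sim_\Gamma$ refines $\sim_\Gamma^*$ and the pair $(U,V)$ occurs in the enumeration $(U_n,V_n)$ of $\sim_\Gamma^*$-equivalent clopens, say $(U,V)=(U_n,V_n)$. Since $U_n\sim_\Gamma V_n$, the third condition of Lemma~\ref{l:main_construction} says that $U_n$ and $V_n$ are $\mcA_n$-equivalent (note that $\mcA_n$ is compatible with $U_n$ and $V_n$, by the first and fourth conditions of that lemma): for every $\mcA_n$-orbit $O$ the number of atoms of $O$ contained in $U_n$ equals the number contained in $V_n$, and hence also the numbers of atoms of $O$ disjoint from $U_n$, resp.\ from $V_n$, agree. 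Now $(\mcA_n,\tilde\Gamma_n)$ is a finite unit system with $\tilde\Gamma_n\cong\prod_O\sym_O$, the product being over all $\mcA_n$-orbits $O$, acting faithfully on the atoms of $\mcA_n$ with orbits precisely these $O$; comparing cardinalities, $\tilde\Gamma_n$ realizes every permutation of the atoms of $\mcA_n$ that preserves each $\mcA_n$-orbit. Choosing, for each $O$, a bijection of the atoms of $O$ that carries those inside $U_n$ onto those inside $V_n$ and those outside $U_n$ onto those outside $V_n$, and letting $\tilde\gamma\in\tilde\Gamma_n\subseteq\tilde\Gamma$ realize the resulting permutation, we get $\tilde\gamma(U_n)=V_n$, i.e.\ $\tilde\gamma(U)=V$. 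So the $\Gamma$-orbit of $U$ is contained in its $\tilde\Gamma$-orbit, which completes the proof.

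The only mildly delicate point is the claim that $\tilde\Gamma_n$ induces the full product of symmetric groups on the $\mcA_n$-orbits; this is not a formal consequence of the unit-system axioms alone, but it is exactly what the construction in Lemma~\ref{l:from_partitions_to_an_ample_group} delivers, so I would simply refer back to that. Alternatively, one could assemble $\tilde\gamma$ orbit by orbit as a composition of the involutions $\delta(\rho,A)$ appearing in the proof of Lemma~\ref{l:Krieger_construction}, avoiding any appeal to the abstract isomorphism type of $\tilde\Gamma_n$.
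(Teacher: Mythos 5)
Your proof is correct and follows essentially the same route as the paper's: the forward inclusion is immediate from the construction (the paper leaves it as a one-liner, you invoke the last clause of Lemma~\ref{l:from_partitions_to_an_ample_group}), and for the reverse inclusion both you and the paper observe that $U\sim_\Gamma V$ forces $(U,V)=(U_n,V_n)$ for some $n$, invoke condition~(3) of Lemma~\ref{l:main_construction} to get $\mcA_n$-equivalence, and then use that $\tilde\Gamma_n$ induces every orbit-preserving permutation of the atoms of $\mcA_n$ to produce the required element. You spell out the last step more carefully than the paper (which simply recalls that $\mcA_n$-orbits coincide with $\tilde\Gamma_n$-orbits), but the substance is identical.
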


\begin{proof}
One inclusion is immediate from the definition of $\tilde \Gamma$.

If $U,V$ are two clopens such that $U \sim_\Gamma V$, then there exists $n$ such that $(U,V)$ are $\mcA_n$-equivalent, so there exists $\gamma \in \tilde \Gamma_n$ such that $\gamma(U)=V$ (recall that the $\mcA_n$-orbit of any atom $\alpha$ of $\mcA_n$ coincides with $\tilde \Gamma_n \alpha$).
\end{proof}

Using Krieger's theorem, we conclude that $\Gamma$ and $\tilde \Gamma$ are conjugate.

\begin{defn}
We say that $x \in X$ is \emph{singular} if for any $n$ the atom $\alpha_n(x)$ of $\mcB_n$ which contains $x$ is singular  (recall that this means that $\pi$ does not coincide on $\alpha_n(x)$ with an element of $\tilde \Gamma_n$; and then $\pi(\alpha_n(x))$ does not belong to $\tilde \Gamma_n \alpha_n(x)$).
\end{defn}

If $x \in X$ is not singular, then $\pi$ coincides on a neighborhood of $x$ with an element of $\tilde \Gamma$. Since $\Lambda$ is generated, as a full group, by $\tilde \Gamma$ and $\pi$, this means that $R_\Lambda$ is the finest equivalence relation coarser than $R_{\tilde \Gamma}$ and such that $x $ is equivalent to $\pi(x)$ for each singular point $x$. Denote by $S$ the set of singular points. It is closed since it is an intersection of clopen sets.

\begin{lemma}
$S$ is $\tilde \Gamma$-sparse and has no isolated points.
\end{lemma}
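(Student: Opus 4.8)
The plan is to prove the two assertions separately, each one being essentially a restatement of the conditions built into the construction of $\pi$ and the singular atoms. For $\tilde\Gamma$-sparseness: fix $x\in S$ and suppose some $\gamma\in\tilde\Gamma$ satisfies $\gamma(x)\in S$ with $\gamma(x)\ne x$. Pick $n$ large enough that $\gamma\in\tilde\Gamma_n$ and such that the atom $\alpha_{n+1}(x)$ of $\mcA_{n+1}$ containing $x$ is disjoint from its image $\gamma(\alpha_{n+1}(x))$ (possible since $\gamma$ has no fixed points near $x$ unless it is locally the identity, in which case $\gamma(x)=x$). Then $\alpha_{n+1}(x)$ and $\gamma(\alpha_{n+1}(x))=\alpha_{n+1}(\gamma(x))$ are two distinct atoms of $\mcA_{n+1}$ lying in the same $\tilde\Gamma_n$-orbit; but both are singular, since $x$ and $\gamma(x)$ are singular points. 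This contradicts condition \ref{sparse}, which guarantees that each $\tilde\Gamma_n$-orbit of atoms of $\mcA_{n+1}$ contains at most one singular atom. Hence each $\tilde\Gamma$-orbit meets $S$ in at most one point, i.e.\ $S$ is $\tilde\Gamma$-sparse. (One must take a little care with the case where $\gamma$ is locally constant equal to the identity near $x$, but then $\gamma(x)=x$, so there is nothing to prove; and since $\tilde\Gamma$ is locally finite, for $n$ large enough $\gamma$ is either the identity or moves the whole atom off itself, by the clopen-fixed-point property of ample groups.)

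For the absence of isolated points: let $x\in S$, and let $U$ be any clopen neighbourhood of $x$; we must find a point of $S\cap U$ distinct from $x$. Pick $n$ large enough that the atom $\alpha_n(x)$ of $\mcA_n$ containing $x$ is contained in $U$. This atom is singular by definition of $x\in S$. By condition \ref{no_isolated_points}, there are at least two singular atoms of $\mcA_{n+1}$ contained in $\alpha_n(x)$; call them $\beta_1\ne\beta_2$, and say $x\in\beta_1$. Inside $\beta_2$, which is again a singular atom, we can iterate: condition \ref{no_isolated_points} applied at each subsequent level produces, by the usual compactness argument for nested nonempty clopen sets, a point $y$ lying in a singular atom of $\mcA_m$ for every $m\ge n+1$, hence $y\in S$; and $y\in\beta_2\subset\alpha_n(x)\subset U$, while $y\ne x$ since $y\in\beta_2$ and $x\in\beta_1$ are in disjoint atoms. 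So every point of $S$ is an accumulation point of $S$, i.e.\ $S$ has no isolated points.

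I expect the only genuine subtlety to be the first part, and within it the bookkeeping around degenerate elements of $\tilde\Gamma$: one needs that for a locally finite ample group acting on $X$, given $\gamma$ and a point $x$ with $\gamma(x)\ne x$, for all sufficiently large $n$ the $\mcA_n$-atom containing $x$ is moved entirely off itself by $\gamma$. This follows because $\{x:\gamma(x)=x\}$ is clopen for an ample group, so its complement is clopen and contains $x$; for $n$ large the atom $\alpha_n(x)$ is contained in that complement, and on that atom $\gamma$ coincides with an element of $\tilde\Gamma_n$ inducing a fixed-point-free involution-type motion of the orbit — concretely, $\alpha_n(x)\cap\gamma(\alpha_n(x))=\emptyset$ once $\alpha_n(x)$ is small enough and $n$ is large enough that $\gamma\in\tilde\Gamma_n$ and $\alpha_n(x)$ is an atom of $\mcA_n$. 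Everything else is a direct unwinding of the two starred conditions, so the proof should be short.
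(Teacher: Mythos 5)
Your proof is correct and takes essentially the same route as the paper: sparseness is read off from condition \ref{sparse} applied to the atoms $\alpha_{n+1}(x)$ and $\alpha_{n+1}(\gamma(x))$ for large $n$, and absence of isolated points from condition \ref{no_isolated_points} plus the fact that each singular atom contains a singular point. The only differences are cosmetic: the paper simply asserts that every singular atom contains a singular point, whereas you prove it by the nested-clopen/compactness argument, and your worry about "degenerate elements of $\tilde\Gamma$" is unnecessary --- since $\gamma\in\tilde\Gamma_n$ permutes the atoms of $\mcA_{n+1}$, once the atoms of $\mcA_{n+1}$ are fine enough to separate $x$ from $\gamma(x)$ (which happens for large $n$ because $(\mcA_n)$ is exhaustive), $\gamma(\alpha_{n+1}(x))$ and $\alpha_{n+1}(x)$ are automatically disjoint, with no need to invoke the clopen-fixed-point-set property.
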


\begin{proof}
Condition \ref{sparse} from the construction of $\pi$ implies that, for any $n$, there is no singular point besides $x$ in $\tilde \Gamma_n \alpha_{n+1}(x)$. In particular, $x$ is the unique singular point in $\tilde \Gamma_n x \cap S$ for any $n$. This proves that $S$ is $\tilde \Gamma$-sparse.

Let $x$ be singular, and $U$ a clopen subset containing $x$. There exists $n$ such that $\alpha_n(x) \subset U$; condition \ref{no_isolated_points} implies that there exists a singular atom of $\mcA_{n+1}$ contained in $\alpha_n(x)$ and disjoint from $\alpha_{n+1}(x)$. Since each singular atom contains a singular point, there is a singular point distinct from $x$ and belonging to $U$. So $S$ has no isolated points.
\end{proof}

Let us recap what we have done. Starting from a minimal ample group $\Gamma$, we built two new minimal ample groups $\tilde \Gamma$, $\Lambda$. The group $\tilde \Gamma$ is conjugated to $\Gamma$ since their actions on $\Clopen(X)$ have the same orbits; $\Lambda$ is saturated. The orbits of the action of $\Lambda$ on $X$ are obtained by gluing together pairs of orbits for the action of $\tilde \Gamma$ along a $\tilde \Gamma$-sparse closed subset which has no isolated points, so Theorem \ref{t:absorption} tells us that $\tilde \Gamma$ is orbit equivalent to $\Lambda$.

Hence $\Gamma$ is orbit equivalent to $\Lambda$; we have finally proved that every minimal ample group is orbit equivalent to a saturated minimal ample group, and this concludes the proof of the classification theorem for minimal ample groups.

\section{The classification theorem for minimal homeomorphisms}

The following theorem is a consequence of \cite{Giordano1995}*{Theorem 2.3}, as well as a particular case of \cite{Giordano2004}*{Theorem~4.16}; it can be seen as an absorption theorem where one only needs to glue two orbits together.
\begin{theorem}\label{t:classification_minimal_homeomorphisms}
Let $\varphi$ be a minimal homeomorphism, and $x \in X$. Then the relations induced by $\varphi$ and by $\Gamma_x(\varphi)$ are isomorphic.
\end{theorem}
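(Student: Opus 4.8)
The plan is to reduce the statement to the classification theorem for minimal ample groups proved in the previous section. Write $\Gamma=\Gamma_x(\varphi)$. From the computation of the orbits of $\Gamma$, the relation $R_\varphi$ is exactly the one obtained from $R_\Gamma$ by gluing the two orbits $O^+(x)$ and $O^-(x)$ together; equivalently, $R_\varphi$ is the finest equivalence relation coarser than $R_\Gamma$ for which $x$ and $\varphi^{-1}(x)$ are equivalent. So it is enough to exhibit a \emph{minimal ample group} $\Lambda$ with $R_\Lambda=R_\varphi$: once this is done, $\Lambda$ acts minimally (its orbits are the $\varphi$-orbits, which are dense), and $M(\Lambda)=M([R_\Lambda])=M([R_\varphi])=M(\varphi)=M(\Gamma_x(\varphi))$ by Lemma \ref{l:closure_Gamma_x}; hence $\Lambda$ and $\Gamma_x(\varphi)$ are orbit equivalent by the classification theorem for minimal ample groups, which is precisely the assertion that $R_\varphi=R_\Lambda$ and $R_{\Gamma_x(\varphi)}$ are isomorphic.

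To build $\Lambda$, I would start from a refining sequence of Kakutani--Rokhlin partitions $(\mcA_n)$ for $\varphi$ whose bases $B_n$ decrease to $\{x\}$ and whose tops $T_n=\varphi^{-1}(B_n)$ decrease to $\{\varphi^{-1}(x)\}$, so that $\Gamma_x(\varphi)=\bigcup_n\Gamma_n$ with the notation of Section \ref{s:Kakutani}. The idea is to adjoin to $\Gamma_n$ a single ``reconnecting'' involution $\pi_n$: a homeomorphism lying in the topological full group $F(\varphi)$, equal to the identity outside a small clopen neighbourhood of $\{x,\varphi^{-1}(x)\}$, equal to $\varphi^{-1}$ on a clopen $\omega_n\ni x$ and to $\varphi$ on $\varphi^{-1}(\omega_n)\ni\varphi^{-1}(x)$, where the $\omega_n$ are chosen to shrink to $\{x\}$ and nested so as to be compatible with the refinements $\mcA_n\to\mcA_{n+1}$. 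With the right bookkeeping the finite groups $\Lambda_n:=\langle\Gamma_n,\pi_n\rangle$ form an increasing chain of finite unit-system groups — the inclusion $\Lambda_n\le\Lambda_{n+1}$ coming from the fact that $\pi_n\pi_{n+1}$, which is supported on the ``annulus'' $(\omega_n\setminus\omega_{n+1})\sqcup\varphi^{-1}(\omega_n\setminus\omega_{n+1})$, can be arranged to lie in $\Gamma_{n+1}$ — and $\Lambda:=\bigcup_n\Lambda_n$ is then an ample group.

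Granting this construction, one checks $R_\Lambda=R_\varphi$ directly: since $\Gamma_x(\varphi)\subseteq\Lambda$ and every $\pi_n$ lies in $F(\varphi)$, we get $R_{\Gamma_x(\varphi)}\subseteq R_\Lambda\subseteq R_\varphi$; and since $x$ belongs to every $\omega_n$ we have $\pi_n(x)=\varphi^{-1}(x)$, so $x$ and $\varphi^{-1}(x)$ are $R_\Lambda$-equivalent, which forces $R_\Lambda$ to contain the finest equivalence relation coarser than $R_{\Gamma_x(\varphi)}$ identifying $x$ with $\varphi^{-1}(x)$, namely $R_\varphi$. Thus $R_\Lambda=R_\varphi$, and the proof concludes as explained in the first paragraph.

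The main point requiring care — and the only real obstacle — is that the groups $\Lambda_n=\langle\Gamma_n,\pi_n\rangle$ really be finite. Adjoining $\pi_n$ to $\Gamma_n$ adds a single ``link'' between $\omega_n$, sitting at the base of the $\mcA_n$-column through $x$, and $\varphi^{-1}(\omega_n)$, sitting at the top of another column; if this link were to close up a loop in the column structure one could produce arbitrarily large powers of $\varphi$ restricted to a clopen set, and $\Lambda_n$ would be infinite. One therefore chooses, at each stage, the partition $\mcA_n$ and the window $\omega_n$ so that $\omega_n$ stays a proper, ``thin'' subset of the base atom through $x$, with iterates $\omega_n,\varphi(\omega_n),\dots$ pairwise disjoint well past the heights of all columns of $\mcA_n$; then the $\pi_n$-link merely stacks a short subcolumn on top of a different column, no loop appears, and $\Lambda_n$ embeds into a finite product of symmetric groups. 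This is exactly the sort of cutting-and-stacking bookkeeping used repeatedly in Sections \ref{s:ample} and \ref{s:balanced}, so I would carry it out briskly. (This argument also supplies, in miniature, the removal of the ``no isolated points'' hypothesis from Theorem \ref{t:absorption} in the case of a two-point set.)
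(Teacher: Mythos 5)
Your reduction in the first and third paragraphs is sound: if there is a minimal ample $\Lambda$ with $R_\Lambda=R_\varphi$, then $M(\Lambda)=M(\varphi)=M(\Gamma_x(\varphi))$ by Lemma~\ref{l:closure_Gamma_x}, and the classification theorem for minimal ample groups finishes the argument. The gap is in the construction of $\Lambda$, and it is not a bookkeeping matter that can be ``carried out briskly.''

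Fix, as you do, Kakutani--Rokhlin partitions $(\mcA_n)$ with bases $B_n\searrow\{x\}$, so that $\Gamma_x(\varphi)=\bigcup_n\Gamma_n$. Your computation that $\pi_n\pi_{n+1}$ acts as $\varphi^{-1}$ on $\omega_n\setminus\omega_{n+1}$ and as $\varphi$ on $\varphi^{-1}(\omega_n\setminus\omega_{n+1})$ is correct. For this to lie in $\Gamma_{n+1}$, the clopen $\omega_n\setminus\omega_{n+1}$ must avoid the base $B_{n+1}$ (else $\varphi^{-1}$ leaves a column); hence $\omega_n\cap B_{n+1}\subseteq\omega_{n+1}$. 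Combined with your requirement that $\omega_{n+1}$ sit inside the base atom of $\mcA_{n+1}$ through $x$ (so $\omega_{n+1}\subseteq B_{n+1}$) and $\omega_{n+1}\subseteq\omega_n$, this forces $\omega_{n+1}=\omega_n\cap B_{n+1}$ and therefore $\omega_m=\omega_n\cap B_m$ for all $m\ge n$. Since $B_m\searrow\{x\}$, eventually $B_m\subseteq\omega_n$, whence $\omega_m=B_m$, the \emph{entire} base of $\mcA_m$ --- exactly the opposite of ``thin.'' The link $\pi_m$ then exchanges the whole base and the whole top of the tower, and $\Lambda_m=\langle\Gamma_m,\pi_m\rangle$ is infinite: taking $\gamma\in\Gamma_m$ the involution swapping bottom and top levels of every column, the element $h=\pi_m\gamma$ acts as $\varphi^{n_i}$ on each base atom $U_{i,0}$, so $h^p(z)=\varphi^{N_p}(z)$ for a strictly increasing sequence $(N_p)$ and $h$ has infinite order. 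There is no room to shrink $\omega_m$ below $B_m$ because the containment is forced; and if you instead keep $\omega_n$ thin by choosing the bases $B_m$ never to fall inside $\omega_n$, then $\bigcap_m B_m$ properly contains $\{x\}$ and $\bigcup_m\Gamma_m$ is no longer $\Gamma_x(\varphi)$. Either way the induction collapses.

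Your final parenthetical claim is a useful diagnostic: what you are trying to do is exactly the absorption theorem \ref{t:absorption} for the two-point set $K=\{x,\varphi^{-1}(x)\}$, which has isolated points and so falls outside that theorem. The paper removes the no-isolated-points hypothesis only in Theorem~\ref{t:absorption2}, whose proof passes through Theorem~\ref{t:ample_vs_integers}, Theorem~\ref{t:splitting_orbits}, and the classification theorem for $\Z$-actions --- that is, through the very statement you are trying to prove. The underlying reason the hypothesis is needed is a parity constraint, implicit in the proof of Theorem~\ref{t:absorption}: when an involution $\pi$ swapping two orbits is extended across a refinement of unit systems, each singular atom of $\mcA_n$ must contain an \emph{even} number of singular atoms of $\mcA_{n+1}$ (the bipartite matching that $\pi$ induces on a singular atom and its image pairs same-colour with same-colour or opposite with opposite, and the mismatch count is automatically even). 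This is precisely why condition~\ref{no_isolated_points} in the proof produces two singular atoms rather than one, and why the resulting singular set is automatically perfect. A ``single thread'' of windows $\omega_n$ converging to the single point $x$, which is what your construction aims at, runs directly into this obstruction.

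The paper's proof avoids building $\Lambda$ altogether. It passes to the \emph{sub}group $\Delta=\Gamma_x(\varphi)\cap\Gamma_y(\varphi)$ for an auxiliary point $y$, checks via a Glasner--Weiss argument that $\overline{[R_\Delta]}=\overline{[R_{\Gamma_x(\varphi)}]}$, applies the classification theorem to conclude that $\Delta$ and $\Gamma_x(\varphi)$ are orbit equivalent, and then --- crucially --- invokes the pointed Krieger theorem~\ref{t:Krieger_pointed} to relocate the two gluing points produced by this orbit equivalence onto $x$ and $\varphi^{-1}(x)$. Your proposal does not use Theorem~\ref{t:Krieger_pointed} at all, which is a further indication that something essential is missing.
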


\begin{proof}
Fix an element $y \in X$ which does not belong to the $\varphi$-orbit of $x$, and consider the ample group $\Delta= \Gamma_x(\varphi) \cap \Gamma_y(\varphi)$, which acts minimally (we skip the proof of this fact since we prove a more general statement below when establishing Theorem \ref{t:splitting_orbits}).

For any integer $N$, we can find a clopen $U$ such that $x,y \in U$ and $U \cap \varphi^i(U)=\emptyset$ for any $i \in \{1,\ldots,N\}$. By considering a Kakutani--Rokhlin partition with basis such a set $U$, we see thanks to Lemma \ref{l:original_Glasner_Weiss} that for any clopen $A$, $B$ such that $\mu(A)< \mu(B)$ for any $\mu \in M(\Gamma)$, there exists $\delta \in \Delta$ such that $\delta(A) \subset B$. 

As in the proof of Lemma \ref{l:closure_full_group}, it follows that 
$$\overline{[R_\Delta]}= \{g \in G \colon \forall \mu \in M(\varphi) \ g_* \mu=\mu\}= \overline{[R_{\Gamma_x(\varphi)}]}$$
Then the classification theorem for minimal ample groups implies that $\Delta$ is orbit equivalent to $\Gamma_x(\varphi)$.

Since $R_{\Gamma_x(\varphi)}$ is obtained from $R_\Delta$ by joining the $\Delta$-orbits of $y$ and $\varphi^{-1}(y)$ together, and $\Gamma_x(\varphi)$ is orbit equivalent to $\Delta$, we see that $R_\Delta$ is isomorphic to the relation obtained from $R_\Delta$ by joining the $R_\Delta$-orbits of $y,\varphi^{-1}(y)$ together.

Hence, there exist $z,t$ with distinct $\Gamma_x(\varphi)$-orbits such that $R_{\Gamma_x(\varphi)}$ is isomorphic to the relation obtained by joining the orbits of $z$ and $t$ together; and by Theorem \ref{t:Krieger_pointed}, there exists $g$ which realizes an isomorphism from $R_{\Gamma_x(\varphi)}$ to itself, with $g(z)=x$ and $g(t)=\varphi^{-1}(x)$.

Finally, $R_{\Gamma_x(\varphi)}$ is isomorphic to the relation induced from $R_{\Gamma_x(\varphi)}$ by joining the orbits of $x$ and $\varphi^{-1}(x)$ together, which is equal to $R_\varphi$.
\end{proof}

The classification theorem for minimal homeomorphisms follows immediately from this and the classification theorem for minimal ample groups.

\begin{proof}[Proof of the classification theorem for minimal homeomorphisms]
Assume that $\varphi$, $\psi$ are two minimal homeomorphisms of $X$ such that $M(\varphi)= M(\psi)$. Recall that for any $x$ we have $M(\Gamma_x(\varphi))=M(\varphi)$ and $M(\Gamma_x(\psi))=M(\psi)$.

Thus if $M(\varphi)=M(\psi)$ then $\Gamma_x(\varphi)$ and $\Gamma_x(\psi)$ are orbit equivalent by the classification theorem for minimal ample groups, and then by the previous theorem $\varphi$ and $\psi$ are orbit equivalent.
\end{proof}

Using the same approach, we can also recover Theorem~4.16 of \cite{Giordano2004}. We first note the following easy fact (we have used earlier in this paper an analogue of this for minimal ample groups).

\begin{lemma}
Assume that $\varphi$ is a minimal homeomorphism of $X$, and that $Y$ is a closed subset of $X$ which meets every $\varphi$-orbit in at most one point. 
Then for any $N$ one can find a clopen subset $U$ containing $Y$ and such that $U \cap\varphi^i(U) = \emptyset$ for all $i \in \{1,\ldots,N\}$. 
\end{lemma}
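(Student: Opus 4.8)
The plan is to reduce the statement to a standard fact about minimal homeomorphisms, namely that disjoint compact sets can be separated into the first $N$ levels of a Kakutani--Rokhlin tower. First I would use compactness and aperiodicity: since $Y$ meets every $\varphi$-orbit in at most one point, the sets $Y, \varphi(Y), \ldots, \varphi^N(Y)$ are pairwise disjoint compact subsets of $X$ (if $\varphi^i(y) = \varphi^j(y')$ with $0 \le i < j \le N$ then $y' = \varphi^{i-j}(y)$ lies in the same orbit as $y$, forcing $y = y'$ by the at-most-one-point hypothesis, and then $\varphi^{j-i}(y) = y$ contradicts that $\varphi$ has no periodic points).

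Next I would separate these $N+1$ disjoint compact sets by clopen sets: since $X$ is a Cantor space (so has a basis of clopen sets) and the $\varphi^i(Y)$ are pairwise disjoint compacta, there exist pairwise disjoint clopen sets $W_0, \ldots, W_N$ with $\varphi^i(Y) \subseteq W_i$ for each $i$. This is exactly the argument already used in the proof of Lemma~\ref{l:clopen_intersecting_K} for a $\Gamma$-sparse set, transposed to the $\Z$-action.

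Then I would set $U = \bigcap_{i=0}^N \varphi^{-i}(W_i)$. This is clopen, it contains $Y$ (since $\varphi^i(Y) \subseteq W_i$ means $Y \subseteq \varphi^{-i}(W_i)$ for every $i$), and for $0 \le i < j \le N$ one has $\varphi^i(U) \subseteq \varphi^i(\varphi^{-i}(W_i)) = W_i$ while $\varphi^j(U) \subseteq W_j$, hence $\varphi^i(U) \cap \varphi^j(U) = \emptyset$ because $W_i \cap W_j = \emptyset$; applying $\varphi^{-i}$ gives $U \cap \varphi^{j-i}(U) = \emptyset$, and as $j - i$ ranges over $\{1, \ldots, N\}$ this is precisely the desired conclusion (the case $i=0$ of $U \cap \varphi^i(U) = \emptyset$ for $i \in \{1,\dots,N\}$ already suffices, but the full pairwise-disjointness drops out for free).

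I do not expect any serious obstacle here; this is a routine separation argument. The only point requiring a moment's care is the deduction that $\varphi^0(Y), \ldots, \varphi^N(Y)$ are pairwise disjoint, which is where the hypotheses that $Y$ is $\varphi$-sparse and that $\varphi$ acts minimally (hence aperiodically) both enter. Everything else is topology of the Cantor space.
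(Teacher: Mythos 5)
Your proof is essentially identical to the paper's: both separate the pairwise disjoint compact sets $Y,\varphi(Y),\ldots,\varphi^N(Y)$ into pairwise disjoint clopens $W_0,\ldots,W_N$, then take $U=\bigcap_{i=0}^N\varphi^{-i}(W_i)$ and observe $U\cap\varphi^i(U)\subseteq W_0\cap W_i=\emptyset$ for $i\ge 1$. The only difference is that you spell out why the $\varphi^i(Y)$ are pairwise disjoint (sparseness plus aperiodicity), which the paper asserts without comment; both proofs establish the conclusion for $i\in\{1,\ldots,N\}$, so the ``$i=0$'' in the statement is evidently a typo (it would force $U=\emptyset$) and you were right not to try to prove it.
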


\begin{proof}
Fix $N$. Since $Y,\varphi(Y),\ldots,\varphi^N(Y)$ are closed and pairwise disjoint, there exist disjoint clopen sets $U_i$ such that $\varphi^i(Y) \subset U_i$ for all $i \in \{0,\ldots,N\}$. Set
$$U=\bigcap_{i=0}^N \varphi^{-i}(U_i) $$
Then $U$ is clopen, contains $Y$, and $U \cap \varphi^{i}(U) \subseteq U_0 \cap U_i= \emptyset$ for all $i \in \{1,\ldots,N\}$.
\end{proof}

\begin{nota}[see \cite{Giordano2004}*{Theorem~4.6}]
Let $\varphi$ be a minimal homeomorphism, and $Y$ be a closed set meeting each $\varphi$-orbit in at most one point. For $y \in Y$, recall that 
\[O^+(y)= \{\varphi^n(y) \colon n \ge 0\} \quad \text{ and } \quad O^-(y)= \{\varphi^n(y) \colon n < 0\}\]

We denote by $R_Y$ the equivalence relation obtained from $R_\varphi$ by splitting the $\varphi$-orbit of each $y \in Y$ into $O^+(y)$ and $O^-(y)$, and leaving the other $R$-classes unchanged.
\end{nota}

\begin{theorem}[\cite{Giordano2004}*{Theorem~4.16}]\label{t:splitting_orbits}
Let $\varphi$ be a minimal homeomorphism, and $Y$ be a closed set which meets every $\varphi$-orbit in at most one point. Then $R_Y$ and $R_\varphi$ are isomorphic.
\end{theorem}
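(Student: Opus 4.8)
The plan is to mimic the proof of Theorem~\ref{t:classification_minimal_homeomorphisms}, which is exactly the special case $Y = \{x\}$ (where splitting the orbit of $x$ into $O^+(x) \sqcup O^-(x)$ produces precisely the relation $R_{\Gamma_x(\varphi)}$, by the structure lemma describing the $\Gamma_x(\varphi)$-orbits). The strategy is to find an ample group $\Delta \le F(\varphi)$ acting minimally, whose induced relation $R_\Delta$ already has the orbit of each $y \in Y$ split into forward and backward halves, and which has the same invariant measures and the same closure of its full group as $\Gamma_x(\varphi)$; then the classification theorem for minimal ample groups (already proved in the previous section) shows $\Delta$ is orbit equivalent to $\Gamma_x(\varphi)$, hence to $\varphi$, and unwinding what $R_\Delta$ is gives the claim.

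Concretely, first I would fix a point $x$ not in $\bigcup Y := \bigcup_{y \in Y}\{\varphi^n(y): n \in \Z\}$ (possible since $Y$ is a proper closed section, and one can check $\varphi$-orbits of points in $Y$ don't exhaust $X$ by a Baire/minimality argument — or simply invoke that the set of such $x$ is dense). Let $\Delta$ be the subgroup of $F(\varphi)$ consisting of all $g$ that preserve $O^+(y)$ for every $y \in Y \cup \{x\}$ and move only finitely many coordinates relative to the Kakutani--Rokhlin structure; more carefully, $\Delta$ should be built as an increasing union of finite groups adapted to a refining sequence of Kakutani--Rokhlin partitions whose bases shrink to the (at most countable, in fact closed) set governing $Y$ together with $x$. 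I would argue $\Delta$ acts minimally: this is the "more general statement'' alluded to in the proof of Theorem~\ref{t:classification_minimal_homeomorphisms}, proved by the same compactness-of-Kakutani--Rokhlin-partitions argument showing every nonempty clopen meets every $\Delta$-orbit. Using the preceding lemma, for each $N$ there is a clopen $U \supseteq Y \cup \{x\}$ with $U, \varphi(U), \dots, \varphi^N(U)$ pairwise disjoint; the Kakutani--Rokhlin partition based at $U$ together with Lemma~\ref{l:original_Glasner_Weiss} shows that whenever $\mu(A) < \mu(B)$ for all $\mu \in M(\varphi)$ there is $\delta \in \Delta$ with $\delta(A) \subset B$. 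As in the proof of Lemma~\ref{l:closure_full_group} this gives $\overline{[R_\Delta]} = \{g : \forall \mu \in M(\varphi)\ g_*\mu = \mu\} = \overline{[R_{\Gamma_x(\varphi)}]}$ and $M(\Delta) = M(\varphi)$.

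Now the classification theorem for minimal ample groups applies: $\Delta$ and $\Gamma_x(\varphi)$ are orbit equivalent. On the other hand, $R_{\Gamma_x(\varphi)}$ is obtained from $R_\Delta$ by gluing back together, for each $y \in Y$, the $\Delta$-orbits $O^+(y)$ and $O^-(y)$ (since $\Delta$ was defined to keep all these halves separate, whereas $\Gamma_x(\varphi)$ keeps only the two halves of $x$ separate). Since $\Delta$ is orbit equivalent to $\Gamma_x(\varphi)$, transporting along an orbit equivalence we obtain points $(z_y, t_y)_{y\in Y}$ with pairwise distinct $\Gamma_x(\varphi)$-orbits, forming a closed section, such that $R_{\Gamma_x(\varphi)}$ is isomorphic to the relation obtained from $R_{\Gamma_x(\varphi)}$ by gluing $O(z_y)$ with $O(t_y)$ for each $y$. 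Then apply Theorem~\ref{t:Krieger_pointed} to $\Gamma_x(\varphi)$ with itself (it has isomorphic closure to itself), using as the sparse sets the closed transversals $\{z_y\} \cup \{t_y\}$ on one side and the image of $Y$ under $h$ on the other, and a homeomorphism matching $z_y \mapsto y$, $t_y \mapsto$ a chosen point of $O(y)$ so that after conjugation the glued relation becomes exactly $R_\varphi$ on the $\Gamma_x(\varphi)$-side — i.e.\ the relation obtained from $R_{\Gamma_x(\varphi)}$ by, for each $y \in Y$, joining $O^+(y)$ to $O^-(y)$, which is precisely $R_\varphi$. Hence $R_Y$ (which equals $R_\Delta$ up to relabeling) is isomorphic to $R_\varphi$.

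The main obstacle I anticipate is the bookkeeping in the final Krieger step: one must be careful that the two closed transversals manipulated are genuinely $\Gamma_x(\varphi)$-sparse (each meeting every orbit in at most one point), that the homeomorphism $h$ between them is compatible with the gluing data (sending the "forward representative'' and "backward representative'' correctly so that gluing on one side corresponds to gluing on the other), and that after conjugating by the resulting $g$ the glued relation is exactly $R_\varphi$ and not some other orbit-split relation. A secondary technical point is the careful construction of $\Delta$ and the verification that it is ample and minimal when $Y$ is an infinite closed set rather than a single point; but this follows the same template as $\Gamma_x(\varphi)$ with the base of the Kakutani--Rokhlin partitions shrinking onto $Y \cup \{x\}$ (a closed, nowhere dense set), and Remark~\ref{KR-part have height as big as we want} together with the lemma on clopen neighborhoods of transversals handles it.
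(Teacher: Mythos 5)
Your plan diverges from the paper's proof in a way that introduces a genuine gap. The paper defines $\Gamma_Y = \bigcap_{y\in Y}\Gamma_y(\varphi)$ (an intersection of ample groups, hence ample), shows directly that $R_{\Gamma_Y}=R_Y$ and that $\Gamma_Y$ acts minimally with $\overline{[R_Y]}=\overline{[R_\varphi]}$, and then applies the classification theorem for minimal ample groups together with Theorem~\ref{t:classification_minimal_homeomorphisms} to conclude $R_Y\cong R_{\Gamma_x(\varphi)}\cong R_\varphi$. No further Krieger step is needed. By instead defining $\Delta$ to preserve $O^+(y)$ for all $y\in Y\cup\{x\}$, you get $R_\Delta=R_{Y\cup\{x\}}$, which is \emph{not} $R_Y$: you have added an extra split at $x$ that you then have to remove, and that is precisely what forces the extra Krieger argument at the end.

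That final Krieger step does not go through as stated. You want to send the $\Gamma_x(\varphi)$-sparse set $\{z_y\}\cup\{t_y\}$ to a set containing, for each $y\in Y$, the point $y$ together with another point on the same $\varphi$-orbit (e.g.\ $\varphi^{-1}(y)$). But for $y\in Y$, the $\Gamma_x(\varphi)$-orbit of $y$ is the full $\varphi$-orbit (since $y$ is not on the orbit of $x$), so $y$ and $\varphi^{-1}(y)$ lie in the same $\Gamma_x(\varphi)$-orbit and the target set is not $\Gamma_x(\varphi)$-sparse; Theorem~\ref{t:Krieger_pointed} does not apply. Relatedly, the relation you describe as the conclusion --- \enquote{$R_{\Gamma_x(\varphi)}$ with $O^+(y)$ joined to $O^-(y)$ for each $y\in Y$} --- is simply $R_{\Gamma_x(\varphi)}$ itself, since those sets are already in the same class; it is not $R_\varphi$. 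You flag this \enquote{bookkeeping} as a concern, but it is in fact fatal to the step as written. The fix is to drop $x$ from the definition of $\Delta$, i.e.\ take $\Delta=\Gamma_Y$; then $R_\Delta=R_Y$, the classification theorem gives $R_Y\cong R_{\Gamma_x(\varphi)}$ for any $x$ off the orbit of $Y$, and Theorem~\ref{t:classification_minimal_homeomorphisms} gives $R_{\Gamma_x(\varphi)}\cong R_\varphi$, with no gluing or extra Krieger argument required. This is exactly the paper's proof.
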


\begin{proof}
Consider the group $\Gamma_Y= \bigcap_{y \in Y} \Gamma_y(\varphi)$. It is ample since it is an intersection of ample groups. 

We saw that for any given $N$ there exists a Kakutani--Rokhlin partition for $\varphi$ with base a clopen set $U$ containing $Y$ and such that $U \cap \varphi^i(U)= \emptyset$ for all $i \in \{1,\ldots,N\}$.
Fix $x$ whose orbit does not intersect $Y$, and find such a partition with $x$ not belonging to $U$ or $\varphi^{-1}(U)$ (just shrink $U$ if necessary) and say $N=2$. Looking at the atom which contains $x$, we see that $\varphi(x)$ and $\varphi^{-1}(x)$ are both $\Gamma_Y$-equivalent to $x$ (since the restriction of $\varphi^{\pm 1}$ to some neighborhood of $x$ belongs to $\Gamma_Y$). This proves that the $\Gamma_Y$-orbit of any element whose $\varphi$-orbit does not intersect $Y$ coincides with its $\varphi$-orbit: we can always move via $\varphi^{\pm1}$ along this orbit. 

By definition of $\Gamma_Y$, the orbit of each $y \in Y$ splits in at least two $\Gamma_Y$-orbits; and for each $n \ge 0$ and $y \in Y$, the restriction of $\varphi^n$ to some neighborhood of $y$ belongs to $\Gamma_Y$ (take a partition as above with $N = n$); same argument for negative semi-orbits. Thus $R_Y$ is the relation induced by $\Gamma_Y$, and in particular $\Gamma_Y$ acts minimally.

As in the proof of Theorem \ref{t:classification_minimal_homeomorphisms}, we deduce from the existence of Kakutani--Rokhlin partitions with arbitrarily large height and basis containing $Y$ that $\overline{[R_Y]}=\overline{[R_\varphi]}$, so by the classification theorem $R_Y$ is isomorphic to $R_\varphi$.
\end{proof}

We conclude this paper by improving our absorption theorem \ref{t:absorption} (the result below is still a particular case of the absorption theorems in \cite{Giordano2004}, \cite{Giordano2008}, \cite{Matui2008}). 

\begin{theorem}\label{t:absorption2}
Let $\Gamma$ be an ample subgroup acting minimally. Let $K = K_1 \sqcup \sigma(K_1)$, where $K_1$ is closed and $\sigma \in \Homeo(K)$ is an involution. Assume that $K$ is $\Gamma$-sparse.

Denote by $R_{\Gamma,K}$ the finest equivalence relation coarser than $R_\Gamma$ and for which $k,\sigma(k)$ are equivalent for all $k \in K$. Then there exists $f \in \Homeo(X)$ such that $f \Gamma f^{-1}$ induces $R_{\Gamma,K}$ (hence $R_{\Gamma,K}$ and $R_\Gamma$ are isomorphic).
\end{theorem}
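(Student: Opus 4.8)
The plan is to reduce Theorem \ref{t:absorption2} to Theorem \ref{t:absorption} by getting rid of the isolated points of $K$. The difficulty compared with Theorem \ref{t:absorption} is exactly that $K$ may now have isolated points; an isolated point $k$ of $K$ is a point whose $\Gamma$-orbit must be glued to the $\Gamma$-orbit of $\sigma(k)$, but we cannot absorb a single such pair of orbits into a Cantor set of singular points the way the proof of Theorem \ref{t:absorption} does. However, we already know how to glue two orbits together while staying conjugate to $\Gamma$: this is essentially the content of Theorem \ref{t:classification_minimal_homeomorphisms} (and more precisely of the mechanism in its proof, via Theorem \ref{t:Krieger_pointed}), transposed from the $\Z$-setting to the ample-group setting. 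So the strategy is: first handle the (countably many) isolated pairs of $K$ one at a time, or all at once, by an orbit-splitting/gluing argument analogous to the proof of Theorem \ref{t:classification_minimal_homeomorphisms}; then apply Theorem \ref{t:absorption} to the perfect kernel of $K$.

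In more detail, I would proceed as follows. Write $K = K' \sqcup D$, where $K'$ is the perfect kernel of $K$ (a closed set without isolated points, invariant under $\sigma$, which we may arrange so that $K' = K_1' \sqcup \sigma(K_1')$ with $K_1'$ closed) and $D$ is the countable set of isolated points of $K$, also $\sigma$-invariant. Enumerate a set $D_1$ of representatives of the $\sigma$-orbits in $D$ as $d_1, d_2, \ldots$ The first step is to show that $R_{\Gamma, K}$ is isomorphic (via a homeomorphism conjugating $\Gamma$ to another ample group $\Gamma'$) to the relation $R_{\Gamma', K''}$ for a suitable $\Gamma$-sparse Cantor set $K''$ without isolated points — in other words, that the isolated gluings can be ``absorbed'' into $K'$. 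Concretely, I would pick, inside $K'$, a countable discrete set of pairs $\{e_i, \sigma(e_i)\}$ accumulating onto a point of $K'$ that we set aside, arrange via Theorem \ref{t:Krieger_pointed} a homeomorphism $g$ conjugating $\Gamma$ to $\Gamma$ and sending $d_i \mapsto e_i$, $\sigma(d_i) \mapsto \sigma(e_i)$ (extending a homeomorphism already defined on $K'$); after conjugating by $g$ the gluing data $K$ becomes gluing data supported on a closed set without isolated points, and then Theorem \ref{t:absorption} applies directly. The subtlety is that Theorem \ref{t:Krieger_pointed} requires the domain and codomain to be $\Gamma$-sparse closed sets and a homeomorphism between them, so I must choose the $e_i$ and the limit point so that $K_1' \cup \{e_i\}$ remains closed, $\Gamma$-sparse, and homeomorphic to $K_1$ via a map extending the chosen identification of $K_1'$ with its image.

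Let me restate the cleanest route. Let $h_1 \colon K_1 \to K_1'$ be chosen as follows: $K_1'$ being a Cantor set, pick inside it a sequence $(e_i)$ of distinct points converging to some $e_\infty \in K_1'$, with all $e_i, \sigma(e_i), e_\infty, \sigma(e_\infty)$ in distinct $\Gamma$-orbits and disjoint from $K'$ appropriately — actually we want $e_i \in K_1'$ so that $L_1 := K_1'$ already contains them. Then choose a homeomorphism $h_1 \colon K_1 \to K_1'$ sending the $i$-th isolated point $d_i$ of $K_1$ to $e_i$ (this is possible since both sides are Cantor sets and $(d_i) \to$ some point of $K_1'$ matching $e_\infty$; one uses that any bijection between two convergent sequences plus limits, compatible with the Cantor structure, extends). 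Extend $h_1$ to $h \colon K \to K'$ by $h(\sigma(k)) = \sigma(h_1(k))$. Both $K$ and $K'$ are $\Gamma$-sparse closed sets, so Theorem \ref{t:Krieger_pointed} gives $f_0 \in \Homeo(X)$ with $f_0 \Gamma f_0^{-1} = \Gamma$ and $f_0|_K = h$. Then $f_0$ conjugates $R_{\Gamma,K}$ to $R_{\Gamma, K'}$ where $K'$ has no isolated points (and is still $\Gamma$-sparse and of the form $K_1' \sqcup \sigma(K_1')$). By Theorem \ref{t:absorption}, there is an ample $\Sigma$ inducing $R_{\Gamma,K'}$ with $\Sigma$ orbit equivalent to $\Gamma$, say $\Sigma = f_1 \Gamma f_1^{-1}$; moreover the proof of Theorem \ref{t:absorption} in fact produces $\Sigma$ as a conjugate of $\Gamma$ (it conjugates $\Gamma$ to $\tilde\Gamma = \Gamma$ and then $\Sigma = g\Lambda g^{-1}$, $\Lambda$ conjugate to $\Gamma$ — re-examining, $\Lambda$ is orbit equivalent, i.e.\ conjugate, to $\Gamma$). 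Composing, $f = f_1 f_0$ works. The main obstacle, and the point that needs care, is verifying that one really can choose the identification $h$ above so that its domain and range are $\Gamma$-sparse and closed with the isolated points of $K$ sent into the perfect part $K'$ — i.e., that there is enough room inside the Cantor set $K_1'$ to accommodate a copy of the isolated points of $K_1$ accumulating correctly — together with the bookkeeping that Theorem \ref{t:absorption} as proved yields a genuine conjugacy (not merely an orbit equivalence), which is what is needed to get the stronger conclusion $f\Gamma f^{-1}$ induces $R_{\Gamma,K}$.
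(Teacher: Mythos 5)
Your plan founders at the key step. You propose to choose a homeomorphism $h_1 \colon K_1 \to K_1'$ sending the isolated points $d_i$ of $K_1$ into the perfect kernel $K_1'$, and then to invoke Theorem~\ref{t:Krieger_pointed} with $h \colon K \to K'$. But Theorem~\ref{t:Krieger_pointed} requires $h$ to be a genuine homeomorphism between $K$ and $L$, and being an isolated point is a topological invariant: a homeomorphism of compact spaces sends isolated points to isolated points. If $K_1$ has isolated points and $K_1'$ has none, then no homeomorphism $K_1 \to K_1'$ exists, no matter how the $e_i$ are chosen to accumulate; the $e_i$ are simply not isolated in $K_1'$. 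So the conjugation $f_0$ cannot be produced this way, and the ``main obstacle'' you flag at the end is not a bookkeeping subtlety but an outright impossibility. (This is precisely why Theorem~\ref{t:absorption} carries the hypothesis that $K$ has no isolated points, and why extending to general $K$ requires a different idea.)

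The paper circumvents the obstruction by producing, on the other side of Theorem~\ref{t:Krieger_pointed}, a $\Gamma$-sparse set that \emph{is} homeomorphic to the given $K$, isolated points and all. Concretely: it first uses Theorem~\ref{t:ample_vs_integers} to write $\Gamma = \Gamma_x(\varphi)$ for a minimal homeomorphism $\varphi$, and picks a homeomorphic embedding $g \colon K \to F$ of $K$ into the singular locus $F$ coming from the absorption construction, with $g \circ \sigma = \pi \circ g$; it sets $Y = g(K_1)$, which is homeomorphic to $K_1$. Then, following the orbit-splitting mechanism of Theorem~\ref{t:splitting_orbits}, it considers $\Gamma_Y = \bigcap_{y \in Y}\Gamma_y(\varphi)$, shows via the classification theorem that this is conjugate to $\Gamma$, and observes that $R_\Gamma$ is recovered from $R_{\Gamma_Y}$ by gluing the orbits of $y$ and $\varphi^{-1}(y)$ for $y \in Y$. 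Now $Y \sqcup \varphi^{-1}(Y)$ is two disjoint homeomorphic copies of $K_1$, hence genuinely homeomorphic to $K = K_1 \sqcup \sigma(K_1)$, and Theorem~\ref{t:Krieger_pointed} can be applied legitimately to transport the gluing data from $Y \sqcup \varphi^{-1}(Y)$ onto $K$. Your instinct that Theorem~\ref{t:Krieger_pointed} should do the transporting is sound; what is missing is a source set matching the topological type of $K$, which the paper manufactures by splitting orbits rather than by trying to push $K$ into a perfect set.
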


\begin{proof}
By the constructions used in the previous section, we know that we can find a nonempty closed subset $F$ of $X$ without isolated points, and a homeomorphism $\pi$ of $F$, such that $x \ne \pi(x)$ for all $x \in F$ and the intersection of any $\Gamma$-orbit with $F$ is either empty or equal to $\{x,\pi(x)\}$ for some $x \in F$. Note that there exists a homeomorphic embedding $g \colon K \to F$ such that $g(\sigma(x))=\pi(g(x))$ for all $x \in K$. Recall that $K=K_1 \sqcup \sigma(K_1)$; let $Y=g(K_1)$. 

Let $\varphi$ be a minimal homeomorphism of $X$ and $x \in X$ such that $\Gamma = \Gamma_x(\varphi)$ (see Theorem \ref{t:ample_vs_integers}). We can assume that $x$ is not in the $\varphi$-orbit of any element of $Y$.
As in the proof of Theorem \ref{t:splitting_orbits}, denote $\Gamma_Y= \bigcap_{y \in Y} \Gamma_y(\varphi)$. We know that $\Gamma_Y$ is orbit equivalent to $\Gamma_x(\varphi)=\Gamma$, so there exists $\Lambda$ conjugate to $\Gamma$ in $\Homeo(X)$ and which induces $R_{\Gamma_Y}$. 

Further, $R_{\Gamma}$ is obtained from $R_{\Gamma_Y}=R_\Lambda$ by joining together the $\Lambda$-orbits of $y$ and $\varphi^{-1}(y)$ for all $y \in Y$. 

Consider the homeomorphism $h \colon Y \sqcup \varphi^{-1}(Y) \to K$ defined by $h(y)=g^{-1}(x)$, $h(\varphi^{-1}(y))=\sigma(g^{-1}(x))$  for all $y \in Y$. Applying theorem \ref{t:Krieger_pointed} to $\Lambda$, $\Gamma$ and $h$, we obtain a homeomorphism $f$ of $X$ such that $f \Lambda f^{-1}= \Gamma$ and $f_{|Y \sqcup \varphi^{-1}(Y)}=h$. 

Then $f \Gamma f^{-1}$ induces the relation obtained from $R_\Gamma$ by gluing together the $\Gamma$-orbit of $h(y)$ and $h(\varphi^{-1}(y))= \sigma(h(y))$ for all $y \in Y$. This relation is exactly $R_{\Gamma,K}$.
\end{proof}

\bibliography{biblio_GPS}

\end{document}